\newtheorem{theorem}{Theorem}[section]
\newtheorem{lemma}[theorem]{Lemma}
\newtheorem{proposition}[theorem]{Proposition}
\newtheorem{corollary}[theorem]{Corollary}
\newtheorem{openProblem}[theorem]{Open problem}
\theoremstyle{definition}
\newtheorem{definition}[theorem]{Definition}
\newtheorem{remark}[theorem]{Remark}
\newtheorem{example}[theorem]{Example}
\newcommand{\id}{\operatorname{id}}
\newcommand{\St}{\operatorname{St}}
\newcommand{\hol}{\operatorname{Hol}}
\newcommand{\Z}{\mathbb{Z}}
\newcommand{\im}{\operatorname{Im}}
\renewcommand{\ker}{\operatorname{Ker}}
\newcommand{\sym}{\operatorname{Sym}}
\newcommand{\aut}{\operatorname{Aut}}
\newcommand{\ret}{\operatorname{Ret}}
\newcommand{\ord}[1]{\vert #1\vert}
\newcommand{\soc}{\operatorname{Soc}}
\newcommand{\core}{\operatorname{core}}
\begin{document}

\title{Solutions of the Yang-Baxter equation associated to skew left braces, with applications to racks
}

\author{David Bachiller\footnote{Research partially supported by a grant of MICIIN (Spain)
MTM2011-28992-C02-01.}}
\date{}
\maketitle

\begin{abstract}
 Given a skew left brace $B$, a method is given to construct all  the
non-degenerate set-theoretic solutions $(X,r)$ of the
Yang Baxter equation such that the  associated permutation
group $\mathcal{G}(X,r)$ is isomorphic, as a skew left brace,  to
$B$. This method depends entirely on the brace structure of $B$. We then adapt this method 
to show how to construct solutions with additional properties, like square-free, involutive or 
irretractable solutions. Using this result, it is even possible to recover racks from their permutation group. 
\end{abstract}

\noindent 2010 MSC: Primary: 16T25; Secondary: 20B05, 17B37, 12F10.

\noindent Keywords: Yang-Baxter equation, set-theoretic solution,
skew left brace, rack, quandle, Hopf-Galois extensions.

\section{Introduction}

The Yang-Baxter equation is a fundamental equation both in mathematical physics (see \cite{BaxterBook}), and in algebra, due to 
its relations with Hopf algebras and quantum groups (see \cite{Kassel}). 
Here by a solution of the Yang-Baxter equation we mean an invertible linear map $R: V\otimes V\to V\otimes V$, where 
$V$ is a vector space, such that 
$$(\id\otimes R)\circ (R\otimes \id)\circ (\id\otimes R)=(R\otimes \id)\circ (\id\otimes R)\circ (R\otimes \id)$$
over $V^{\otimes 3}$. It is an open problem to find all the solutions to this equation, and because of this, 
Drinfeld proposed in \cite{Drinfeld} to study the class of set-theoretic solutions 
as a more tractable case. A set-theoretic solution is a bijective map $r:X\times X\to X\times X$, 
where $X$ is a non-empty set, such that 
$$(\id\times r)\circ (r\times \id)\circ (\id\times r)=(r\times \id)\circ (\id\times r)\circ (r\times \id)$$
over $X^3$; 
by linear extension, a set-theoretic solution always induces a usual solution. This class of solutions has received a 
lot of attention recently due to the combinatorial and algebraic structures associated to them. 

The first solutions of this type to be studied were the non-degenerate involutive solutions. Initially, in 
\cite{ESS,GVdB} they were studied using two associated groups, the structure group $G(X,r)$ and the 
permutation group $\mathcal{G}(X,r)$ of a solution $(X,r)$. Then, Rump in \cite{Rump1} adapted the results in \cite{ESS,GVdB} to a more natural setting
by introducing a new algebraic structure called left brace.
A left brace is a set $B$ with two operations $+$ and $\cdot$ such that $(B,+)$ is an abelian group, $(B,\cdot)$ is a group, and 
any $a,b,c\in B$ satisfy $a\cdot (b+c)+a=a\cdot b+a\cdot c$. 
Rump showed that both $G(X,r)$ and $\mathcal{G}(X,r)$ are left braces, and that many of the previous results are more naturally 
stated in terms of this structure. A good introduction to left braces can be found in \cite{CJOarXiv}. 

This new approach turned out to be very fruitful \cite{B2,B3,BCJ,BCJO,CJO,CGS,GatBraces,Agata2,Agata}. 
In particular, in \cite{BCJ} a method was presented to 
construct, from a given left brace $B$, all the non-degenerate involutive solutions $(X,r)$ such that $\mathcal{G}(X,r)\cong B$ as braces, 
reducing the classification of all the non-degenerate involutive solutions
to the classification of all the left braces.

Recently, an analogous approach for non-involutive solutions has been proposed. These solutions were first studied in
 \cite{Soloviev, LYZ} with a combination of algebraic and combinatorial techniques, using three associated groups to a solution $(X,r)$: 
the structure group $G(X,r)$, the derived group $A(X,r)$, and the permutation group $\mathcal{G}(X,r)$.  Then, Guarnieri and Vendramin in \cite{GV} 
adapted these results using a generalization of left braces called skew left braces. A skew left brace is a
set $B$ with two operations $\star$ and $\cdot$ such that $(B,\star)$ and $(B,\cdot)$ are groups, and any $a,b,c\in B$ satisfy 
$a\cdot(b\star c)=(a\cdot b)\star a^\star \star (a\cdot c)$, where $a^\star$ denotes the inverse of $a$ in $(B,\star)$. Note that left braces are precisely skew left braces with $(B,\star)$ abelian. 
It is shown in \cite{GV} that $G(X,r)$ and $\mathcal{G}(X,r)$ are skew left braces, and that $A(X,r)$ is isomorphic to the group $(G(X,r),\star)$. 
Again, the results of \cite{Soloviev, LYZ} turn out to be more natural in this setting. 

The main aim of this paper is to exploit this generalization to obtain analogous results to the ones in \cite{BCJ}: we show how to construct, 
from a given skew left brace $B$, all the non-degenerate solutions $(X,r)$ such that $\mathcal{G}(X,r)\cong B$ as braces. 
Moreover, isomorphisms of solutions correspond to certain automorphisms of $B$. Hence, this translates 
the construction of non-degenerate solutions to a purely brace-theoretical problem, and, in particular, 
reduces the classfication of non-degenerate solutions to the classification of skew left braces. To prove these 
results, a important tool is the result \cite[Theorem~2.7]{Soloviev}.

We start in Section~\ref{sectSkewBraces} by introducing some concepts about skew left braces. 
In particular, we present some new group actions induced by any skew left braces that 
will be fundamental in the next sections.
Although some of the results are originally from \cite{GV}, we restate them in a convenient way for 
our purposes.  After that, in Section~\ref{sectNonDegSol}, we first recall the basic notions 
about non-degenerate solutions, and their connection with skew left braces. With that at hand, we present our main results about 
construction and isomorphism of non-degenerate solutions 
in Section~\ref{sectConstSol}.

Then, we adapt our method to construct non-degenerate solutions with additional properties, like square-free (Section~\ref{SquareFree}), involutive (Section~\ref{constInvSol}), or irretractable (Section~\ref{constIrret}) solutions. 
The result about the involutive case is originally from \cite{BCJ}, but here we show how it fits in the general setting. 
As an example of all these techniques, we show another way to construct the example of a square-free irretractable solution of size $8$ 
due to Vendramin \cite{VendraminCounter}. 
Next, we devote Section~\ref{sectRacks} to racks and quandles, which are important combinatorial invariants of knots (see \cite{Nelson}). 
We show that they correspond to the case of trivial skew left braces, and using our main result, 
we describe how to recover any rack from its permutation group. 

Finally, two parts are somehow disconnected from the rest: group-theoretical results about $G(X,r)$ and $A(X,r)$ in Section~\ref{sectGroupTheory}, 
and Hopf-Galois extensions in Section~\ref{sectHopfGalois}. 
The former is included because both $G(X,r)$ and $A(X,r)$ are important groups in our exposition, and it is important to understand 
better their structure. The later is introduced because we think that this connection will be useful in the future, and because
some of the best results about classification of skew left braces, our ultimate goal, are done in this setting.

\section{Skew left braces: Initial concepts and results}\label{sectSkewBraces}

Motivated by the work of \cite{Soloviev,LYZ} (which we will recall in the next section), 
Guarnieri and Vendramin introduced in \cite{GV} the concept of skew left braces, in order to generalize 
left braces (introduced by Rump in \cite{Rump1}) and their applications to solutions of the Yang-Baxter equation. 

\begin{definition}
A {\em skew left brace} is a set $B$ with two binary operations, 
$\star$ and $\cdot$, such that $(B,\star)$ and $(B,\cdot)$ are groups, 
and any $a,b,c\in B$ satisfy
$$
a\cdot(b\star c)=(a\cdot b)\star a^\star \star(a\cdot c),
$$
where $a^\star$ denotes the inverse of $a$ for the operation $\star$. 
A {\em skew right brace} is defined analogously, changing 
the last property by $(b\star c)\cdot a=(b\cdot a)\star a^\star\star (c\cdot a)$. 
A skew left brace also satisfying the condition of a 
skew right brace is called a {\em skew two-sided brace}. 
\end{definition}

We denote the inverse of $a$ for the operation $\cdot$ by $a^{-1}$. We call $(B,\star)$ the star 
group of $B$, and $(B,\cdot)$, the multiplicative group
 of $B$.

Observe that the identity element of the two operations coincide: 
$a\cdot 1_\star=a\cdot (1_\star \star 1_\star)=(a\cdot 1_\star)\star a^\star \star (a\cdot 1_\star),$
and multiplying on the left by $[(a\cdot 1_\star)\star a^\star]^\star$ with respect to the $\star$ 
operation, we get $a\cdot 1_\star=a$ for any $a\in B$.
We denote this common identity element by $1(=1_\cdot=1_\star)$.

\begin{example}\label{FirstExNLB}
\begin{enumerate}[(a)]
\item Any group $(G,\cdot)$ with second operation given by $g\star h:=g\cdot h$ is a 
skew two-sided brace. This structure is called a {\em trivial brace}. 

\item Any group $(G,\cdot)$ with star product given by $g\star h:=h\cdot g$ is 
a skew two-sided brace. If $(G,\cdot)$ is abelian, this example coincides with the 
previous one. 

\item Take $\Z/(2n)=\langle \gamma\rangle$ the cyclic group of $2n$ elements written multiplicatively. 
Define the star product $\gamma^a\star\gamma^b:=\gamma^{(-1)^b a+b}$, for any $\gamma^a,\gamma^b\in \Z/(2n)$. 
Then $(\Z/(2n),\star,\cdot)$ is a skew two-sided brace. 

\item Take the group $G=\langle \sigma, \tau: \sigma^7=\tau^3=1,\tau\star\sigma=\sigma^2\star\tau\rangle$ 
(which is the non-abelian group of order $3\cdot 7$)
as star group, and define the product $(\sigma^a\star \tau^b)\cdot (\sigma^i\star \tau^j):=\sigma^{a+4^bi}\star\tau^{b+j}$, 
for any $\sigma^a\star \tau^b$, $\sigma^i\star\tau^j\in G$. 
Then, $(G,\star,\cdot)$ is a skew left brace. Note that 
$(\tau\star \tau)\cdot \sigma=\tau^2\cdot\sigma=\sigma^{16}\star \tau^2=\sigma^{2}\star\tau^2,$
but
$(\tau\cdot\sigma)\star\sigma^\star\star (\tau\cdot \sigma)=\sigma^4\star\tau\star\sigma^\star\star\sigma^4\star\tau=
\sigma^3\star\tau^2$. So this is an example of a skew left brace which is not two-sided. 
\end{enumerate}
\end{example}

Some of the important properties of skew left braces are related to 
actions of this algebraic structure over different sets. In the next three lemmas, 
we define the actions $\lambda$, $\gamma$ and $\Theta$, which are fundamental for the 
algebraic study of skew left braces, and prove their main properties. 

\begin{lemma}[{\cite[Corollary~1.10]{GV}}]\label{propieLambda}
Let $B$ be a skew left brace. For every $a\in B$, define
a map $\lambda_a: B\to B$ by  $\lambda_a(b):=a^\star\star(a\cdot b),$ for any
$b\in B$. Then
\begin{enumerate}[(i)]
\item $\lambda_a(b\star c)=\lambda_a(b)\star \lambda_a(c);$
\item $\lambda_{a\cdot b}=\lambda_a\circ \lambda_{b};$
\item $\lambda_a$ is bijective, with inverse equal to $\lambda_{a^{-1}}$;
\item the map $\lambda: (B,\cdot)\to\aut(B,\star)$, $a\mapsto \lambda_a$, is a morphism of groups.
\end{enumerate}
\end{lemma}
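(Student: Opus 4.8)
The plan is to prove the four claims in the order listed, since each feeds into the next, and the only genuine computation is the verification of the multiplicativity property~(ii).

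For~(i), I would simply expand $\lambda_a(b\star c)=a^\star\star(a\cdot(b\star c))$ and apply the defining axiom of a skew left brace to the factor $a\cdot(b\star c)$, turning it into $(a\cdot b)\star a^\star\star (a\cdot c)$. Substituting back gives $a^\star\star(a\cdot b)\star a^\star\star(a\cdot c)$, and recognizing the two blocks $a^\star\star(a\cdot b)=\lambda_a(b)$ and $a^\star\star(a\cdot c)=\lambda_a(c)$ yields the claim at once. Along the way I would also record the trivial fact $\lambda_a(1)=a^\star\star(a\cdot 1)=a^\star\star a=1$, so that together with~(i) each $\lambda_a$ is an endomorphism of the star group $(B,\star)$; this is the property that will be promoted to an automorphism in~(iv).

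The heart of the argument is~(ii), and I would compute both sides separately and compare. The left side is $\lambda_{a\cdot b}(c)=(a\cdot b)^\star\star\bigl((a\cdot b)\cdot c\bigr)$. For the right side, $(\lambda_a\circ\lambda_b)(c)=a^\star\star\bigl(a\cdot(b^\star\star(b\cdot c))\bigr)$, and applying the brace axiom to the inner product $a\cdot(b^\star\star(b\cdot c))$ expands it to $a^\star\star(a\cdot b^\star)\star a^\star\star(a\cdot b\cdot c)$. Both expressions terminate in the common factor $a\cdot b\cdot c$, so after right-cancellation in the group $(B,\star)$ the identity~(ii) reduces to the single relation $(a\cdot b)^\star=a^\star\star(a\cdot b^\star)\star a^\star$. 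I expect this relation to be the main obstacle, but it is obtained cleanly by applying the brace axiom to $b\star b^\star=1$: this gives $a=(a\cdot b)\star a^\star\star(a\cdot b^\star)$, and $\star$-multiplying on the left by $(a\cdot b)^\star$ and on the right by $a^\star$ produces exactly the required relation.

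With~(ii) in hand, (iii) and~(iv) are formal. For~(iii) I would note $\lambda_1=\id$ (since $\lambda_1(b)=1^\star\star(1\cdot b)=b$), and then apply~(ii) to the pairs $(a,a^{-1})$ and $(a^{-1},a)$, obtaining $\lambda_a\circ\lambda_{a^{-1}}=\lambda_{a\cdot a^{-1}}=\lambda_1=\id$ and likewise $\lambda_{a^{-1}}\circ\lambda_a=\id$; hence $\lambda_a$ is bijective with inverse $\lambda_{a^{-1}}$. Finally, for~(iv), combining~(i) with $\lambda_a(1)=1$ and the bijectivity from~(iii) shows $\lambda_a\in\aut(B,\star)$, and the relation $\lambda_{a\cdot b}=\lambda_a\circ\lambda_b$ from~(ii) is precisely the statement that $a\mapsto\lambda_a$ is a group morphism $(B,\cdot)\to\aut(B,\star)$.
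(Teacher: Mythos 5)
Your argument is correct and complete: the expansion in (i), the reduction of (ii) to the identity $(a\cdot b)^\star=a^\star\star(a\cdot b^\star)\star a^\star$ obtained by applying the brace axiom to $b\star b^\star=1$, and the formal deductions of (iii) and (iv) all check out (the right-cancellation in $(B,\star)$ is legitimate since both sides of (ii) end in the same element $a\cdot b\cdot c$). The paper itself states this lemma without proof, citing \cite[Corollary~1.10]{GV}, so there is no in-paper argument to compare against; your derivation is the standard one and fills that gap correctly.
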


\begin{lemma}\label{propieg}
Let $B$ be a skew left brace. For every $b\in B$, define
a map $\gamma_b: B\to B$ by  $\gamma_b(a):=\lambda^{-1}_{\lambda_a(b)}((a\cdot b)^{\star}\star a\star (a\cdot b)),$ for any
$a\in B$. Then, for any $a,b\in B$,
\begin{enumerate}[(i)]
\item $\gamma_b(a)=\lambda^{-1}_{\lambda_a(b)}(\lambda_a(b)^\star\star a\star \lambda_a(b))
=(b^{-1}\cdot (a^{-1}\star b))^{-1}=((b^{-1}\cdot a^{-1})\star (b^{-1})^{\star})^{-1}$;
\item $\gamma_a$ is bijective, with inverse equal to $\gamma_{a^{-1}}$;
\item $\gamma_a\circ \gamma_b=\gamma_{b\cdot a}$.
\end{enumerate}
\end{lemma}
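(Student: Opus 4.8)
The plan is to prove the three claims in the order (i), then (iii), then (ii), since (ii) follows formally from (iii). The single fact I would lean on throughout is the reformulation of the brace axiom through $\lambda$: rearranging $\lambda_a(b)=a^\star\star(a\cdot b)$ gives the key identity $a\cdot b=a\star\lambda_a(b)$. I would also repeatedly use $\lambda_{a^{-1}}\circ\lambda_a=\lambda_{a^{-1}\cdot a}=\lambda_1=\id$ from Lemma~\ref{propieLambda}, and take care to keep the $\star$-inverse $x^\star$ distinct from the $\cdot$-inverse $x^{-1}$.

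For the first equality in (i), I would substitute $a\cdot b=a\star\lambda_a(b)$ into the argument $(a\cdot b)^\star\star a\star(a\cdot b)$; expanding $(a\star\lambda_a(b))^\star=\lambda_a(b)^\star\star a^\star$ and cancelling $a^\star\star a$ collapses it to $\lambda_a(b)^\star\star a\star\lambda_a(b)$, so applying $\lambda^{-1}_{\lambda_a(b)}$ to both gives the stated identity. For the second equality, rather than compute $\lambda^{-1}$ directly I would use that $\lambda_u$ (with $u:=\lambda_a(b)$) is a bijection and apply it to both sides. Writing $\gamma_b(a)=\lambda_u^{-1}(u^\star\star a\star u)$ and noting $u^\star\star a\star u=u^\star\star(a\star u)=u^\star\star(a\cdot b)$, I obtain $\lambda_u(\gamma_b(a))=u^\star\star(a\cdot b)$; since $\lambda_u(x)=u^\star\star(u\cdot x)$ this reads $u\cdot\gamma_b(a)=a\cdot b$, i.e. $\gamma_b(a)=\lambda_a(b)^{-1}\cdot(a\cdot b)$. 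It then remains to match this with the closed form, which I would do by passing to $\cdot$-inverses: $(\lambda_a(b)^{-1}\cdot(a\cdot b))^{-1}=(a\cdot b)^{-1}\cdot\lambda_a(b)=b^{-1}\cdot(a^{-1}\cdot\lambda_a(b))$, and the auxiliary identity $a^{-1}\cdot\lambda_a(b)=a^{-1}\star\lambda_{a^{-1}}(\lambda_a(b))=a^{-1}\star b$ finishes it, yielding $\gamma_b(a)=(b^{-1}\cdot(a^{-1}\star b))^{-1}$.

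The third expression in (i) then follows from the second by a single use of the brace axiom with $a$ replaced by $b^{-1}$: since $b^{-1}\cdot(a^{-1}\star b)=(b^{-1}\cdot a^{-1})\star(b^{-1})^\star\star(b^{-1}\cdot b)$ and $b^{-1}\cdot b=1$, the two inner expressions coincide. For (iii) I would work with the closed form $\gamma_b(x)^{-1}=b^{-1}\cdot(x^{-1}\star b)$. Computing $\gamma_a(\gamma_b(x))^{-1}=a^{-1}\cdot(\gamma_b(x)^{-1}\star a)$ and $\gamma_{b\cdot a}(x)^{-1}=a^{-1}\cdot b^{-1}\cdot(x^{-1}\star(b\cdot a))$, I cancel $a^{-1}$ and expand both remaining sides by the brace axiom (again with $a\mapsto b^{-1}$); using $b^{-1}\cdot b=1$ each collapses to $(b^{-1}\cdot x^{-1})\star(b^{-1})^\star\star a$, proving $\gamma_a\circ\gamma_b=\gamma_{b\cdot a}$. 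Finally (ii) is immediate: from (iii) together with $\gamma_1(x)=(1\cdot x^{-1})^{-1}=x$, i.e. $\gamma_1=\id$, we get $\gamma_a\circ\gamma_{a^{-1}}=\gamma_{a^{-1}\cdot a}=\id=\gamma_{a\cdot a^{-1}}=\gamma_{a^{-1}}\circ\gamma_a$.

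The step I expect to be the main obstacle is the second equality in (i): inverting the clustered $\lambda^{-1}_{\lambda_a(b)}$ cleanly. The trick that makes it painless is to apply the \emph{bijection} $\lambda_u$ to both sides, converting an expression involving an inverse automorphism into the purely multiplicative identity $u\cdot\gamma_b(a)=a\cdot b$, and then to invoke the auxiliary identity $a^{-1}\cdot\lambda_a(b)=a^{-1}\star b$. Everything else is bookkeeping with the brace axiom, where the only genuine care needed is to keep $\star$-inverses and $\cdot$-inverses apart.
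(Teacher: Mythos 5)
Your proof is correct and follows essentially the same route as the paper: both establish the closed form $\gamma_b(a)=\lambda_a(b)^{-1}\cdot a\cdot b=(b^{-1}\cdot(a^{-1}\star b))^{-1}$ in (i) and then verify (iii) by a direct expansion with the brace axiom. The only cosmetic differences are that you apply the bijection $\lambda_u$ to both sides rather than using the explicit formula $\lambda_u^{-1}(x)=u^{-1}\cdot(u\star x)$ as the paper does, and you deduce (ii) from (iii) via $\gamma_1=\id$ whereas the paper simply exhibits the inverse map $a\mapsto (b\cdot (a^{-1}\star b^{-1}))^{-1}$.
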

\begin{proof}
\begin{enumerate}[(i)]
\item The first equality is due to some cancellations of the star operation: 
\begin{align*}
\lambda^{-1}_{\lambda_a(b)}(\lambda_a(b)^\star\star a\star \lambda_a(b))&=
\lambda^{-1}_{\lambda_a(b)}((a\cdot b)^\star\star a\star a\star a^\star\star (a\cdot b))\\
&=\lambda^{-1}_{\lambda_a(b)}((a\cdot b)^\star\star a\star (a\cdot b))=\gamma_b(a).
\end{align*}

With this first equality, we rewrite $\gamma_b$ in terms of 
the operations of $B$:
\begin{align*}
\gamma_b(a)&=\lambda^{-1}_{\lambda_a(b)}(\lambda_a(b)^\star\star a\star \lambda_a(b))
=\lambda_a(b)^{-1}\cdot \left(\lambda_a(b)\star \lambda_a(b)^\star\star a\star \lambda_a(b)\right)\\
&=\lambda_a(b)^{-1}\cdot (a\star \lambda_a(b))
=\lambda_a(b)^{-1}\cdot a\cdot b=
(b^{-1}\cdot a^{-1}\cdot \lambda_a(b))^{-1}\\
&=(b^{-1}\cdot (a^{-1}\star b))^{-1}=((b^{-1}\cdot a^{-1})\star (b^{-1})^{\star})^{-1}.
\end{align*}

\item With this last expression for $\gamma$, we see that $\gamma_b$ is invertible, 
with inverse $$a\mapsto (b\cdot (a^{-1}\star b^{-1}))^{-1}.$$

\item Now we check that $\gamma:(B,\cdot)\to\sym_B$ is an anti-morphism. For 
this, we use that $\gamma_b(a)=((b^{-1}\cdot a^{-1})\star (b^{-1})^{\star})^{-1}$, as we have seen before. 
On one side, for any $a,b,c\in B$, 
\begin{align*}
\gamma_{ba}(c)&=\left[(a^{-1}\cdot b^{-1}\cdot c^{-1})\star (a^{-1}\cdot b^{-1})^\star\right]^{-1}\\
&=
\left[(a^{-1}\cdot b^{-1}\cdot c^{-1})\star (a^{-1}\star \lambda_{a^{-1}}( b^{-1}))^\star\right]^{-1}\\
&=\left[(a^{-1}\cdot b^{-1}\cdot c^{-1})\star \lambda_{a^{-1}}( b^{-1})^\star \star (a^{-1})^\star\right]^{-1}\\
&=\left[(a^{-1}\cdot b^{-1}\cdot c^{-1})\star \lambda_{a^{-1}}( (b^{-1})^\star) \star (a^{-1})^\star\right]^{-1}\\
&=\left[(a^{-1}\cdot b^{-1}\cdot c^{-1})\star (a^{-1})^\star\star(a^{-1}\cdot (b^{-1})^\star) \star (a^{-1})^\star\right]^{-1},
\end{align*}
and, on the other, 
\begin{align*}
\gamma_a(\gamma_b(c))&=\gamma_a(\left[ (b^{-1}\cdot c^{-1})\star (b^{-1})^\star\right]^{-1})\\
&=
\left[ (a^{-1}\cdot \left[(b^{-1}\cdot c^{-1})\star (b^{-1})^\star\right])\star (a^{-1})^\star\right]^{-1}\\
&=\left[ (a^{-1}\cdot b^{-1}\cdot c^{-1})\star (a^{-1})^\star\star (a^{-1}\cdot (b^{-1})^\star)\star (a^{-1})^\star\right]^{-1}.
\end{align*}

\end{enumerate}
\end{proof}

\begin{lemma}\label{propieAction}
Let $B$ be a skew left brace. 
Let $(B,\star)\rtimes (B,\cdot)$ be the semidirect product of groups defined by the action 
$\lambda: (B,\cdot)\to\aut(B,\star)$ of Lemma~\ref{propieLambda}. 
Then, the map
$$
\begin{array}{cccc}
\Theta : & (B,\star)\rtimes (B,\cdot) & \longrightarrow & \aut(B,\star)\\
 & (a,b) & \mapsto & \Theta_{(a,b)} : [c\mapsto a\star \lambda_b(c)\star a^\star]
\end{array}
$$
is a morphism of groups. 
\end{lemma}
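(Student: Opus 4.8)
The plan is to verify the two things the statement really asserts: first that each $\Theta_{(a,b)}$ genuinely lands in $\aut(B,\star)$, so that $\Theta$ is a well-defined map, and second that $\Theta$ respects the two group operations. For well-definedness I would observe that $\Theta_{(a,b)}$ is the composite of two maps already known to be automorphisms of $(B,\star)$: the map $\lambda_b$, which lies in $\aut(B,\star)$ by Lemma~\ref{propieLambda}, followed by the inner automorphism $c\mapsto a\star c\star a^\star$ of the group $(B,\star)$. Since a composite of automorphisms is again an automorphism, $\Theta_{(a,b)}\in\aut(B,\star)$, and $\Theta$ is at least a well-defined map into $\aut(B,\star)$.

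For the homomorphism property I would first pin down the semidirect-product multiplication: with the action $\lambda$, the left factor acts on the right one, so $(a_1,b_1)\cdot(a_2,b_2)=(a_1\star\lambda_{b_1}(a_2),\,b_1\cdot b_2)$. I would then evaluate $\Theta_{(a_1,b_1)(a_2,b_2)}$ on an arbitrary $c\in B$ straight from the definition, obtaining $(a_1\star\lambda_{b_1}(a_2))\star\lambda_{b_1 b_2}(c)\star(a_1\star\lambda_{b_1}(a_2))^\star$. The core of the computation is to reshape this into $\Theta_{(a_1,b_1)}(\Theta_{(a_2,b_2)}(c))$, and the ingredients are, in order: Lemma~\ref{propieLambda}(ii), giving $\lambda_{b_1 b_2}=\lambda_{b_1}\circ\lambda_{b_2}$; the rule $(a_1\star\lambda_{b_1}(a_2))^\star=\lambda_{b_1}(a_2)^\star\star a_1^\star$ for $\star$-inverses; and Lemma~\ref{propieLambda}(i) together with $\lambda_{b_1}$ being an automorphism, so that $\lambda_{b_1}(a_2)\star\lambda_{b_1}(\lambda_{b_2}(c))\star\lambda_{b_1}(a_2^\star)$ collapses to $\lambda_{b_1}\bigl(a_2\star\lambda_{b_2}(c)\star a_2^\star\bigr)$. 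After this collapse the middle factor is exactly $\lambda_{b_1}$ applied to $\Theta_{(a_2,b_2)}(c)$, and conjugating by $a_1$ in $(B,\star)$ yields precisely $\Theta_{(a_1,b_1)}\bigl(\Theta_{(a_2,b_2)}(c)\bigr)$, as required.

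I do not expect a serious obstacle: once the semidirect-product convention is fixed, the whole argument is bookkeeping driven by the homomorphism properties of $\lambda$. The one place where I would be most careful is the order of factors when pushing $\lambda_{b_1}$ through a $\star$-product and through a $\star$-inverse, since $(B,\star)$ need not be abelian; and the other likely source of a side error is the choice of which factor acts on which in the semidirect product. I would therefore state the multiplication rule explicitly at the outset and track the noncommutative $\star$-products verbatim throughout.
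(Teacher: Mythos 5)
Your proposal is correct and follows essentially the same route as the paper: the paper likewise dismisses well-definedness as a direct check and establishes the homomorphism property by exactly the computation you describe, expanding $\Theta_{(a_1\star\lambda_{b_1}(a_2),\,b_1\cdot b_2)}(c)$ and collapsing the middle three factors via $\lambda_{b_1\cdot b_2}=\lambda_{b_1}\circ\lambda_{b_2}$ and the fact that $\lambda_{b_1}$ is a $\star$-automorphism. Your explicit factorization of $\Theta_{(a,b)}$ as an inner automorphism composed with $\lambda_b$ is a slightly more detailed justification of the well-definedness step than the paper offers, but the substance is identical.
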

\begin{proof}
It is direct to check that $\Theta_{(a,b)}$ is an automorphism of $(B,\star)$ for any $a,b\in B$, so 
$\Theta$ is well-defined. 
Now, the following argument proves that $\Theta$ is a morphism: for any $a_1,b_1,a_2,b_2,c\in B$,
\begin{align*}
\Theta_{(a_1,b_1)(a_2,b_2)}(c)&=\Theta_{(a_1\star \lambda_{b_1}(a_2),b_1\cdot b_2)}(c)
=(a_1\star \lambda_{b_1}(a_2))\star \lambda_{b_1\cdot b_2}(c)\star (a_1\star \lambda_{b_1}(a_2))^\star\\
&=a_1\star \lambda_{b_1}(a_2)\star \lambda_{b_1\cdot b_2}(c)\star \lambda_{b_1}(a_2)^\star\star a_1^\star
=a_1\star \lambda_{b_1}(a_2\star \lambda_{b_2}(c)\star a_2^\star)\star a_1^\star\\
&=\Theta_{(a_1,b_1)}\Theta_{(a_2,b_2)}(c).
\end{align*} 
\end{proof}

As in any algebraic structure, it is interesting to study substructures, and 
to be able to define a quotient structure with respect to some of this 
substructures. 

\begin{definition}
Let $B$ be a skew left brace. A subset $I$ of $B$ is an {\em ideal} of $B$ if 
it is a normal subgroup of $(B,\star)$, a normal subgroup of 
$(B,\cdot)$, and $\lambda_b(y)\in I$ for any $y\in I$, $b\in B$.

A {\em sub-brace} is a subset of $B$ which is a subgroup of $(B,\cdot)$, and a subgroup of $(B,\star)$. 
A {\em left ideal} is a subgroup $L$ of $(B,\star)$ such that $\lambda_b(y)\in L$, for 
any $b\in B$ and $y\in L$.
\end{definition}

Note that a left ideal $L$ of $B$ is always a sub-brace of $B$, since $a\cdot b=a\star\lambda_a(b)\in L$ 
and $a^{-1}=\lambda_{a^{-1}}(a)^\star\in L$, for any $a,b\in L$. By \cite[Lemma~2.3]{GV},
if $I$ is an ideal of $B$, we can define a structure of skew left brace over the quotient $B/I$.
We can also define in the standard way morphisms of skew left braces
and, as usual, ideals are precisely the kernels of the possible morphisms.

One ideal that is particularly interesting is the socle. 

\begin{definition}
Let $B$ be a skew left brace. We define the {\em socle of $B$} as 
$$
\soc(B):=\{b\in B~:~\lambda_b=\id,\text{ and } a\star b\star a^{\star}=b\text{ for any } a\in B\}.
$$
\end{definition}

The socle appears as the kernel of some of the actions defined before. 

\begin{proposition}[{\cite[Lemma~2.5]{GV}}]\label{SocleKernel}
\begin{enumerate}[(i)]
\item The socle is an ideal of $B$, contained in the center of $(B,\star)$. 
For any $a\in B$ and any $y\in\soc(B)$, $\lambda_a(y)=a\cdot y\cdot a^{-1}$.
Moreover, $\soc(B)$ is a trivial brace, so $(\soc(B),\cdot)$ is an abelian group. 
\item The socle coincides with the kernel of the morphism of groups
$$
\begin{array}{cccc}
\varphi: &(B,\cdot)& \to &\aut(B,\star)\times\sym_B\\
 & a & \mapsto & (\lambda_a,\gamma_a^{-1}).
\end{array}
$$

It is also the kernel of the morphism of groups
$$
\begin{array}{cccc}
\Phi:& (B,\cdot)& \to &\aut(B,\star)\times\aut(B,\star)\\
& a & \mapsto & (\lambda_a,h_a),
\end{array}
$$
where $h_a(b)=a\star \lambda_a(b)\star a^\star$.
\end{enumerate}
\end{proposition}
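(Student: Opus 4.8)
The plan is to prove Proposition~\ref{SocleKernel} in two parts, corresponding to (i) and (ii), and to exploit the explicit formulas for $\lambda$, $\gamma$, and $h$ established in the previous lemmas rather than arguing abstractly. The most efficient route is actually to prove (ii) first, because once we know that $\soc(B)$ equals the kernel of a morphism of groups, it follows immediately that it is a normal subgroup of $(B,\cdot)$, which handles a large part of (i).

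\medskip

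\noindent\textbf{Proof of (ii).} First I would verify that $\varphi$ and $\Phi$ are genuinely morphisms of groups. For $\varphi$, the first component is a morphism by Lemma~\ref{propieLambda}(iv), and the second is a morphism because $\gamma$ is an \emph{anti}-morphism by Lemma~\ref{propieg}(iii) (so $\gamma_{a}^{-1}=\gamma_{a^{-1}}$ is a morphism, since $\gamma_{a^{-1}}\circ\gamma_{b^{-1}}=\gamma_{(ab)^{-1}}$). For $\Phi$, the first component is again $\lambda$, and the second is $a\mapsto h_a$ with $h_a=\Theta_{(a,a)}$; since $a\mapsto(a,a)$ is the diagonal map $(B,\cdot)\to(B,\star)\rtimes(B,\cdot)$, which one checks is a group morphism, and $\Theta$ is a morphism by Lemma~\ref{propieAction}, the composite $a\mapsto h_a$ is a morphism. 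Then I would identify the two kernels with $\soc(B)$. An element $a$ lies in $\ker\varphi$ iff $\lambda_a=\id$ and $\gamma_a=\id$; using the formula $\gamma_b(a)=(b^{-1}\cdot(a^{-1}\star b))^{-1}$ from Lemma~\ref{propieg}(i), the condition $\gamma_a=\id$ unwinds, together with $\lambda_a=\id$, to $a\star c\star a^\star=c$ for all $c$, matching the socle definition. For $\Phi$, note that when $\lambda_a=\id$ the map $h_a$ reduces to $c\mapsto a\star c\star a^\star$, so $h_a=\id$ is exactly the centralizing condition; hence $\ker\Phi=\soc(B)$ as well.

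\medskip

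\noindent\textbf{Proof of (i).} From (ii), $\soc(B)$ is normal in $(B,\cdot)$. To see it is the centre-related ideal claimed, I would read the two defining conditions directly: the condition $a\star b\star a^\star=b$ for all $a$ says precisely that $b$ is central in $(B,\star)$, so $\soc(B)\subseteq Z(B,\star)$ and in particular $\soc(B)$ is a normal subgroup of $(B,\star)$. For the $\lambda$-invariance required of an ideal, I would compute $\lambda_a(y)$ for $y\in\soc(B)$ using $\lambda_a(y)=a^\star\star(a\cdot y)$; since $\lambda_y=\id$ I can write $a\cdot y=a\star\lambda_a(y)$ and manipulate, but the cleaner identity to target is $\lambda_a(y)=a\cdot y\cdot a^{-1}$, which I would derive from the brace axiom combined with $\lambda_y=\id$ and centrality of $y$ in $(B,\star)$. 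This conjugation formula shows $\lambda_a(y)\in\soc(B)$ (as $\soc(B)$ is normal in $(B,\cdot)$) and simultaneously proves the displayed claim $\lambda_a(y)=a\cdot y\cdot a^{-1}$. Finally, that $\soc(B)$ is a \emph{trivial} brace amounts to showing $x\cdot y=x\star y$ for $x,y\in\soc(B)$: this follows because $x\cdot y=x\star\lambda_x(y)=x\star y$ using $\lambda_x=\id$; abelianness of $(\soc(B),\cdot)$ then follows from commutativity of the star operation on the central subgroup $\soc(B)$.

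\medskip

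\noindent The main obstacle I anticipate is bookkeeping in part (ii): correctly translating the single equation $\gamma_a=\id$ into the clean centralizing condition of the socle definition, since the formula for $\gamma$ mixes both operations and both inverses, and one must use $\lambda_a=\id$ carefully in tandem to collapse it. Once the $\gamma_a=\id$ computation is done, everything else is a matter of assembling standard facts about kernels and applying the conjugation formula $\lambda_a(y)=a\cdot y\cdot a^{-1}$, whose derivation is the one genuinely brace-theoretic step.
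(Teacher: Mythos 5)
Your proposal is correct, but it organizes the argument in the reverse order from the paper, and this is worth noting. The paper proves (i) first by direct computation: it verifies by hand that $\soc(B)$ is closed under $\cdot$ and under inverses, and then establishes normality in $(B,\cdot)$ through a fairly laborious manipulation (rewriting $b^{-1}\cdot(a\star(b\cdot y\cdot b^{-1})\star a^\star)$ using the brace axiom and the identity $(y\cdot b)\star b^\star=y$ for $y$ in the socle); only afterwards does it prove (ii), where the morphism properties of $\varphi$ and $\Phi$ come from Lemmas~\ref{propieLambda}, \ref{propieg} and \ref{propieAction} exactly as you describe. By proving (ii) first you get the subgroup and normality statements for $(B,\cdot)$ free of charge from the fact that kernels are normal, which eliminates the paper's longest computation; the price is that you must check there is no circularity, and indeed there is none, since your kernel identification uses only the formula $\gamma_a(b)=((a^{-1}\cdot b^{-1})\star(a^{-1})^\star)^{-1}$ together with the observation that $\lambda_a=\id$ forces $a^{-1}=a^\star$. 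One small point to make explicit when you write this up: containment in the center of $(B,\star)$ does not by itself make $\soc(B)$ a subgroup of $(B,\star)$; you need closure under $\star$, which follows (as in the paper) from closure under $\cdot$ together with $x\cdot y=x\star\lambda_x(y)=x\star y$, and closure under $\star$-inverses, which follows from $y^{-1}=y^\star$ for $y\in\soc(B)$. The remaining ingredients --- the conjugation formula $\lambda_a(y)=a\cdot y\cdot a^{-1}$, the triviality of the brace structure, and the unwinding of $\gamma_a=\id$ into the centralizing condition --- coincide with the paper's computations.
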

\begin{proof}
\begin{enumerate}[(i)]
\item Let $y,y'\in\soc(B)$ and $a,b\in B$. Note that $\lambda_{y\cdot y'}=\lambda_y\lambda_{y'}=\id$, and 
$a\star (y\cdot y')\star a^{\star}=a\star y\star y'\star a^{\star}=y\star y'=y\cdot y'$, so $y\cdot y'\in\soc(B)$. We also have that 
$\lambda_{y^{-1}}=\lambda^{-1}_y=\id$ and that $a\star y^{-1}\star a^\star=a\star \lambda_{y^{-1}}(y)^\star\star a^\star=
a\star y^\star\star a^\star=y^\star=y^{-1}$, 
 so $y^{-1}\in\soc(B)$, and then $\soc(B)$ is a subgroup of $(B,\cdot)$. Moreover, 
$\lambda_{b\cdot y\cdot b^{-1}}=\lambda_b\lambda_y\lambda_{b^{-1}}=\lambda_b\circ\id\circ\lambda_{b^{-1}}=\id$, 
and 
$a\star (b\cdot y\cdot b^{-1})\star a^\star=b\cdot y\cdot b^{-1}$ because
$b^{-1}\cdot (a\star (b\cdot y\cdot b^{-1})\star a^\star)=
(b^{-1}a)\star (b^{-1})^\star\star (yb^{-1})\star (b^{-1})^\star\star(b^{-1}a^\star)=
(b^{-1}a)\star (b^{-1})^\star\star y \star(b^{-1}a^\star)=
y\star (b^{-1}a)\star (b^{-1})^\star\star (b^{-1}a^\star)=
y\star b^{-1}(a\star a^{\star})=y\star b^{-1}=y\cdot b^{-1}$, 
using in one of the steps that 
$(y\cdot b)\star b^\star=(y\star b)\star b^\star=y$ when $y$ is in the socle. So it is a normal 
subgroup of $(B,\cdot)$.

Besides, $\lambda_b(y)=b^\star \star (byb^{-1})\star b=byb^{-1}$, 
thus $\soc(B)$ is closed by the lambda maps. Closed by lambda maps and closed by $\cdot$ implies 
that it is closed by $\star$. And it is a normal subgroup of $(B,\star)$ by definition (note that 
in fact it is contained in the center of $(B,\star)$ by definition).

Finally, in $\soc(B)$, $\cdot$ and $\star$ coincide: 
for any $x,y\in B$, $x\cdot y=x\star\lambda_x(y)=x\star \id(y)=x\star y$. In other words, 
$\soc(B)$ is a trivial brace. 

\item By Lemmas~\ref{propieLambda} and \ref{propieg}, the map $\varphi$ is well-defined, and it is a morphism. 
Now recall that $\gamma_a(b)=((a^{-1}\cdot b^{-1})\star (a^{-1})^{\star})^{-1}$, so if $\lambda_a=\id$, then 
$\gamma_a(b)=((a^{-1}\cdot b^{-1})\star (a^{-1})^{\star})^{-1}=((a^{-1}\star b^{-1})\star (a^{-1})^{\star})^{-1}=
(a^{-1}\star b^{-1}\star (a^{-1})^{\star})^{-1},$ thus $\gamma_a(b)=b$ if and only if $a^{-1}\star b^{-1}=b^{-1}\star a^{-1}$. Hence
\begin{align*}
\ker(\varphi)&=\{a\in B:\lambda_a=\gamma_a=\id\}\\
&=\{a\in B:\lambda_a=\id,~b\star a=a\star b\text{ for any }b\in B\}\\
&=\soc(B).
\end{align*}

The part about $\Phi$ is analogous, using Lemma~\ref{propieAction}.
\end{enumerate}
\end{proof}

These are all the initial concepts and results about skew left braces 
that we are going to need in this paper.

\section{Non-degenerate set-theoretic solutions}\label{sectNonDegSol}

\subsection{Connections with skew left braces}
Our aim in this section is to recall the class of non-degenerate set-theoretic solutions of the Yang-Baxter equation, 
and to show that skew left braces are an adequate algebraic structure to study them. 
Some of the most fundamental results of this section are due to Soloviev \cite{Soloviev} and Lu, Yan and Zhu \cite{LYZ}.
We also follow the exposition of \cite{Takeuchi} and \cite{GV}.

\begin{definition}
A {\em set-theoretic solution of the Yang-Baxter equation} is a pair $(X,r)$, where 
$X\neq \emptyset$ is a set, and $r$ is a bijective map $r:X\times X\to X\times X$ such that 
$r_1\circ r_2\circ r_1=r_2\circ r_1\circ r_2$, where
$r_1=r\times \id$ and $r_2=\id\times r$.
\end{definition}

\paragraph{Notation: } From now on, by a {\em solution} we will mean a set-theoretic solution of the Yang-Baxter 
equation. Besides, the components of any map $r:X\times X\to X\times X$ will be denoted by
$r(x,y)=(f_x(y),g_y(x))$, for any $x,y\in X$.

\paragraph{}
With this notation, the conditions to be a solution can be written more explicitly as follows:
$(X,r)$ is a solution if and only if $f$ and $g$ satisfy the three equalities
\begin{align}
f_{f_x(y)}f_{g_y(x)}(z)&=f_x f_y(z),\label{SolCond1}\\ 
g_{f_{g_y(x)}(z)}f_x(y)&=f_{g_{f_y(z)}(x)}g_z(y),\text{ and }\label{SolCond2}\\ 
g_zg_y(x)&=g_{g_z(y)}g_{f_y(z)}(x), \label{SolCond3}
\end{align}
for any $x,y,z\in X$.

\begin{definition}
Given two solutions $(X,r)$ and $(Y,s)$, a map $F:X\to Y$ is a {\em morphism of solutions} of the Yang-Baxter equation if 
it satisfies $s(F(x_1),F(x_2))=((F\times F)\circ r)(x_1,x_2)$ for any $x_1,x_2\in X$. If $F$ is bijective, we say that $F$ is 
an isomorphism of solutions. 
\end{definition}

If $r(x_1,x_2)=(f_{x_1}(x_2),g_{x_2}(x_1))$ and $s(y_1,y_2)=(f'_{y_1}(y_2), g'_{y_2}(y_1))$, then 
$F$ is a morphism of solutions if and only if $F(f_{x_1}(x_2))=f'_{F(x_1)}F(x_2)$ and $F(g_{x_2}(x_1))=g'_{F(x_2)}F(x_1)$ 
for any $x_1,x_2\in X$. 

\paragraph{} Since the initial work of Etingof, Schedler and Soloviev \cite{ESS}, and Gateva-Ivanova and Van den Bergh \cite{GVdB}, 
set-theoretic solutions are not studied in general, but adding some additional properties that allow 
us to study them with techniques from group theory. The most usual ones are non-degeneracy, and involutivity.

\begin{definition}
We say that a solution $(X,r)$ is {\em non-degenerate} if $f_x$ and $g_x$ are invertible for any $x\in X$. 

We say that $(X,r)$ is {\em involutive} if $r^2=r\circ r=\id$; in other words, since
\begin{align*}
r^2(x,y)=r(f_x(y),g_y(x))=(f_{f_x(y)}g_y(x),~g_{g_y(x)}f_x(y)),
\end{align*}
a solution is involutive if and only if $f_{f_x(y)}g_y(x)=x$ and $g_{g_y(x)}f_x(y)=y$ for any
$x,y\in X$.
\end{definition}

From now on, we focus on non-degenerate solutions. In this section, we study non-degenerate solutions in general, and 
then we will study non-degenerate involutive solutions as a particular case.

\paragraph{} As we have said, we can study non-degenerate solutions using group theory. 
In this strategy, the following two groups are fundamental.

\begin{definition}
Let $(X,r)$ be a non-degenerate solution. The structure group of $(X,r)$ is defined by
the presentation 
$$
(G(X,r),\cdot):=\left\langle X\mid x\cdot y=f_x(y)\cdot g_y(x)\text{, for } x,y\in X\right\rangle.
$$
The derived group of $(X,r)$ is defined by the presentation
$$
(A(X,r),\star):=\left\langle X\mid x\star f_x(y)=f_x(y)\star f_{f_x(y)}g_y(x)\text{, for } x,y\in X\right\rangle.
$$
\end{definition}

We denote by $i_G: X\to G(X,r)$ and $i_A: X\to A(X,r)$ the two natural maps. As the following example 
shows, these maps are not always injective.

\begin{example}(due to P. Etingof, see \cite[page 17]{LYZ})
Let $X$ be a set, and let $f,g: X\to X$ be two bijective maps. Define $r: X\times X\to X\times X$
by $r(x,y)=(f(y),g(x))$, $x,y\in X$. Then one can check that $r$ is a solution if and only if 
$f\circ g=g\circ f$. Inside $G(X,r)$, we have $x\cdot y=f(y)\cdot g(x)=f(g(x))\cdot g(f(y))=
f(g(x))\cdot f(g(y)).$ If $x$ is a fixed point of $f\circ g$, then $y=f(g(y))$ in $G(X,r)$, but
that is not true in $X$ if $y$ is not a fixed point of $f\circ g$.
Thus $i_G:X\to G(X,r)$ is not injective in general. 
Note that, in this case, $r$ is involutive if and only if $g=f^{-1}$.    
\end{example}

In other words, due to its defining relations, some of the generators of $G(X,r)$ (resp. 
of $A(X,r)$) might become identified. We must be very careful with this fact during our 
proofs. 

\paragraph{} The next lemma shows that, in some sense, it is natural to consider the last two groups, 
because then we can extend the maps $f$ and $g$ to define actions of them. 

\begin{lemma}[{\cite[Theorem~2.4]{Soloviev}}]\label{actionsfg}
The map $f: X\to \sym_X$, $x\mapsto f_x$, induces a unique morphism of groups
$f: G(X,r)\to\sym_X$ such that $f_{i_G(x)}(y)=f_x(y)$ for any $x,y\in X$. It also induces a unique morphism
$\lambda: G(X,r)\to \aut(A(X,r))$ such that $\lambda_{i_G(x)}(i_A(y))=i_A(f_x(y))$
for any $x,y\in X$. Moreover, the map $g:X\to\sym_X$, $x\mapsto g_x$, induces 
 a unique anti-morphism of groups $g: G(X,r)\to\sym_X$ such that $g_{i_G(x)}(y)=g_x(y)$ for any 
 $x,y\in X$.
\end{lemma}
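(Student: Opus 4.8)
The plan is to verify each of the three claimed extensions by checking that the relevant relations are respected, which is the standard recipe for constructing a homomorphism out of a group given by generators and relations. The statement asks for three maps: a morphism $f:G(X,r)\to\sym_X$, a morphism $\lambda:G(X,r)\to\aut(A(X,r))$, and an anti-morphism $g:G(X,r)\to\sym_X$. In each case the map is prescribed on the generators $i_G(x)$, so uniqueness is automatic (the generators generate), and the entire content is \emph{existence}: I must show the assignment descends to the quotient defining $G(X,r)$.

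First I would treat $f:G(X,r)\to\sym_X$. Since $G(X,r)$ is the free group on $X$ modulo the relations $x\cdot y=f_x(y)\cdot g_y(x)$, defining $f$ on the free group by $x\mapsto f_x$ (using non-degeneracy so that $f_x\in\sym_X$) and then checking it kills every defining relation suffices. Concretely I must verify $f_x\circ f_y=f_{f_x(y)}\circ f_{g_y(x)}$ in $\sym_X$ for all $x,y\in X$; evaluating both sides at an arbitrary $z\in X$, this is exactly the solution condition \eqref{SolCond1}. So the existence of $f$ is precisely equation \eqref{SolCond1}. For the anti-morphism $g:G(X,r)\to\sym_X$ with $x\mapsto g_x$, I would set up the \emph{opposite} convention: an anti-morphism sends a product $x\cdot y$ to $g_y\circ g_x$, so I must check that the two sides of each defining relation produce the same element, i.e. $g_y\circ g_x=g_{g_y(x)}\circ g_{f_{\,?}}\dots$. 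Matching this against \eqref{SolCond3}, namely $g_z\circ g_y=g_{g_z(y)}\circ g_{f_y(z)}$, gives precisely the compatibility needed, again using non-degeneracy to guarantee $g_x\in\sym_X$.

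Next, for $\lambda:G(X,r)\to\aut(A(X,r))$, the target is now the derived group $A(X,r)$ rather than $X$ itself, and the prescription is $\lambda_{i_G(x)}(i_A(y))=i_A(f_x(y))$. Here I would first produce a candidate assignment $x\mapsto\lambda_x$ on the free group and check two things: that each $\lambda_x$ is a well-defined endomorphism of $A(X,r)$ (it must respect the defining relations of $A(X,r)$, so I would push the relation $x\star f_x(y)=f_x(y)\star f_{f_x(y)}g_y(x)$ through $\lambda_x$ and verify it holds), that $\lambda_x$ is invertible hence an automorphism, and finally that $x\mapsto\lambda_x$ kills the defining relations of $G(X,r)$ so that it factors through to a genuine morphism on $G(X,r)$. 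The last step is again governed by \eqref{SolCond1}, now read inside $A(X,r)$ via $i_A$, together with how $f$ interacts with the generators.

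The main obstacle, and the step deserving the most care, is the $\lambda$ construction, and specifically the bookkeeping forced by the fact (emphasized in the preceding Etingof example) that $i_A$ and $i_G$ need not be injective: I cannot freely manipulate the symbols $i_A(y)$ as if they were distinct, so every verification must be phrased as an equality \emph{of the defining relations}, never of individual letters, and I must confirm that $\lambda_x$ is well-defined on equivalence classes in $A(X,r)$, not merely on free words. I expect to lean on equations \eqref{SolCond1}--\eqref{SolCond3} repeatedly, each encoding one of the three compatibilities, and the bulk of the routine work is simply confirming that the right solution condition lines up with the right relation; the conceptual subtlety is entirely in handling the non-injectivity of the structure maps correctly.
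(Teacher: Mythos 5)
Note first that the paper itself gives no proof of this lemma --- it is quoted verbatim from Soloviev's Theorem~2.4 --- so there is no in-paper argument to measure you against; your proposal has to stand on its own. Your strategy (prescribe each map on the free group on $X$, observe that uniqueness is automatic, and reduce existence to checking that the defining relations are killed) is the standard and correct one, and for the two maps into $\sym_X$ you have isolated exactly the right conditions: the identity $f_xf_y=f_{f_x(y)}f_{g_y(x)}$ needed for the morphism $f$ is \eqref{SolCond1} evaluated pointwise, and the identity $g_yg_x=g_{g_y(x)}g_{f_x(y)}$ needed for the anti-morphism $g$ is \eqref{SolCond3} after renaming, with non-degeneracy guaranteeing that the images lie in $\sym_X$.

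Where the plan stops short of a proof is the part you yourself flag as delicate, the construction of $\lambda$, and two checks there are left as ``verify it holds.'' First, well-definedness of $\lambda_x$ as an endomorphism of $A(X,r)$: applying $w\mapsto f_x(w)$ to the relation of $A(X,r)$ indexed by $(y,z)$ gives $f_x(y)\star f_xf_y(z)=f_xf_y(z)\star f_x(f_{f_y(z)}g_z(y))$, and to recognize this as the relation indexed by the pair $(f_x(y),\,f_{g_y(x)}(z))$ one must prove the identity $f_x(f_{f_y(z)}g_z(y))=f_{f_xf_y(z)}\,g_{f_{g_y(x)}(z)}(f_x(y))$ in $X$; this is precisely where \eqref{SolCond2} enters, followed by one more application of \eqref{SolCond1}. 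Your closing remark that each of \eqref{SolCond1}--\eqref{SolCond3} ``encodes one of the three compatibilities'' slightly misplaces \eqref{SolCond2}: it is not attached to a third map but is consumed inside this verification. Second, invertibility of $\lambda_x$ does not come for free: you must separately check that $i_A(w)\mapsto i_A(f_x^{-1}(w))$ also preserves the relations of $A(X,r)$ (or deduce it from the already-constructed morphism $f\colon G(X,r)\to\sym_X$). Neither point invalidates your approach, but both computations must actually be carried out for the argument to be complete.
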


Using these actions, we can prove the following fundamental theorem, which provides a structure of 
skew left brace over $G(X,r)$. Here, the group $A(X,r)$ plays the role of the 
star group, and the map $\lambda$ that we have just defined is the action in Lemma~\ref{propieLambda}.
 This theorem is really important for us, because the other results of this 
paper will be a direct consequence 
of this fundamental theorem.

\begin{theorem}[{\cite[Theorem~2.5]{Soloviev}, \cite[                                                                                                                                                                                                                                   Theorem~9]{LYZ}}]\label{braceStruct}
Let $(X,r)$ be any non-degenerate solution. Then, we can define a product $\star$ 
over $G(X,r)$ such that $(G(X,r),\cdot,\star)$ is a skew left brace, and such that
there exists a group isomorphism $\varphi: (G(X,r),\star)\to A(X,r)$ with $\varphi(i_G(x))=i_A(x)$ for any $x\in X$. 
This operation is given by $b\star b'=b\cdot \varphi^{-1}(\lambda^{-1}_b(\varphi(b')))$, 
for any $b,b'\in G(X,r)$, where $\lambda$ is defined in the last lemma. 
\end{theorem}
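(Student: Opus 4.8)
The plan is to realise the $\star$-structure on $G(X,r)$ by transporting the group structure of $A(X,r)$ along a bijection $\varphi$, and to obtain that bijection as a bijective $1$-cocycle. Everything hinges on producing a bijection $\varphi\colon G(X,r)\to A(X,r)$ with $\varphi(i_G(x))=i_A(x)$ satisfying the cocycle identity $\varphi(b\cdot b')=\varphi(b)\star\lambda_b(\varphi(b'))$ for all $b,b'\in G(X,r)$, where $\lambda\colon G(X,r)\to\aut(A(X,r))$ is the morphism of Lemma~\ref{actionsfg} and $\star$ denotes the group operation of $A(X,r)$. Granting such a $\varphi$, the operation $b\star b':=\varphi^{-1}(\varphi(b)\star\varphi(b'))$ turns $\varphi$ into a group isomorphism $(G(X,r),\star)\to A(X,r)$ by construction, and the cocycle identity immediately rewrites this as $b\star b'=b\cdot\varphi^{-1}(\lambda_b^{-1}(\varphi(b')))$, the formula in the statement (since then $\lambda_b(\varphi(\varphi^{-1}(\lambda_b^{-1}(\varphi(b')))))=\varphi(b')$).

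To build $\varphi$ I would form the semidirect product $H:=A(X,r)\rtimes G(X,r)$ associated with $\lambda$, with multiplication $(a_1,b_1)(a_2,b_2)=(a_1\star\lambda_{b_1}(a_2),\,b_1\cdot b_2)$, and consider the map $X\to H$, $x\mapsto(i_A(x),i_G(x))$. A direct check shows this map respects the defining relations of $G(X,r)$: the second coordinate reproduces the relation $i_G(x)i_G(y)=i_G(f_x(y))i_G(g_y(x))$, while the first coordinate becomes, using $\lambda_{i_G(x)}(i_A(y))=i_A(f_x(y))$ from Lemma~\ref{actionsfg}, exactly the defining relation $i_A(x)\star i_A(f_x(y))=i_A(f_x(y))\star i_A(f_{f_x(y)}g_y(x))$ of $A(X,r)$. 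Hence the universal property of $G(X,r)$ yields a homomorphism $\Psi\colon G(X,r)\to H$; composing with the projection $H\to G(X,r)$ recovers a homomorphism fixing the generators, so $\Psi(b)=(\varphi(b),b)$ for a well-defined map $\varphi$, and the fact that $\Psi$ is a homomorphism is precisely the cocycle identity above, with $\varphi(i_G(x))=i_A(x)$.

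The main obstacle is showing that $\varphi$ is a bijection; this is where the actual solution structure enters and cannot be deduced formally, since a bijective cocycle forces $G(X,r)$ and $A(X,r)$ to have equal cardinality. I would establish it through the companion $1$-cocycle obtained by the symmetric construction — an action of $A(X,r)$ on $G(X,r)$ producing $\psi\colon A(X,r)\to G(X,r)$ with $\psi(i_A(x))=i_G(x)$ — and check that $\psi$ inverts $\varphi$; equivalently, one argues on the level of the structure monoids, where the defining relations are length-preserving and induced by the bijection $r$ of $X\times X$, so that a normal-form/rewriting argument exhibits $\varphi$ as a length-preserving bijection before passing to the groups of fractions. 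This is precisely the content of the cited results of Soloviev \cite[Theorem~2.5]{Soloviev} and Lu--Yan--Zhu \cite[Theorem~9]{LYZ}, so I would either reproduce their rewriting argument or invoke it directly.

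With $\varphi$ in hand, the skew left brace axiom is a short verification that I would carry out by applying the isomorphism $\varphi$ to both sides of $a\cdot(b\star c)=(a\cdot b)\star a^\star\star(a\cdot c)$. Using that $\varphi$ is a group isomorphism for $\star$ (so $\varphi(a^\star)=\varphi(a)^\star$), the cocycle identity for $\cdot$, and that each $\lambda_a$ is an automorphism of $(A(X,r),\star)$, the left-hand side becomes $\varphi(a)\star\lambda_a(\varphi(b))\star\lambda_a(\varphi(c))$; and the right-hand side, after expanding $\varphi(a\cdot b)=\varphi(a)\star\lambda_a(\varphi(b))$ and $\varphi(a\cdot c)=\varphi(a)\star\lambda_a(\varphi(c))$ and cancelling the inner factor $\varphi(a)^\star$, reduces to the same expression. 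The identical computation, applied to the candidate $\lambda$-map $c\mapsto a^\star\star(a\cdot c)$, shows that $\varphi$ carries the brace's own $\lambda$-map to the map $\lambda_a$ of Lemma~\ref{actionsfg}, confirming the internal consistency of the statement.
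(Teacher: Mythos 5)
The paper itself gives no proof of this theorem: it imports it verbatim from Soloviev (Theorem~2.5) and Lu--Yan--Zhu (Theorem~9), which is also where your argument ultimately lands. Your construction --- the homomorphism $G(X,r)\to A(X,r)\rtimes G(X,r)$ induced by $x\mapsto(i_A(x),i_G(x))$, whose well-definedness reduces the first coordinate to the defining relation of $A(X,r)$ via $\lambda_{i_G(x)}(i_A(y))=i_A(f_x(y))$, the resulting $1$-cocycle $\varphi$, transport of structure, and the verification of the skew-brace identity --- is the standard proof from those sources, and every formal step you write out (the relation check, the equivalence of $b\star b'=\varphi^{-1}(\varphi(b)\star\varphi(b'))$ with the stated formula, and the cancellation yielding the brace axiom) is correct. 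You also correctly isolate the one genuinely non-formal step, the bijectivity of $\varphi$, and defer it to the cited rewriting/length-preservation argument on the structure monoids, which is exactly the level of detail at which the paper leaves it.
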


In particular, $i_G(x)=i_G(y)$ for some $x,y\in X$ if and only if $i_A(x)=i_A(y)$. Thus, from now on, 
we will identify $(G(X,r),\star)$ with $A(X,r),$ and we will use the notation $i=i_G=i_A$. 

The next technical lemma, that we will need later on, is an example of 
the kind of computations that one encounters working with this new operation 
over $G(X,r)$. 

\begin{lemma}\label{lemmaxb}
Let $(X,r)$ be a non-degenerate solution. Then, for any $x\in X$ and for any $b\in G(X,r)$, 
$i(x)\cdot b=\lambda_x(b)\cdot i(g_b(x))$. 
\end{lemma}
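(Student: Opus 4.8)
Write $P(b)$ for the assertion that $i(x)\cdot b=\lambda_x(b)\cdot i(g_b(x))$ for every $x\in X$ (here $\lambda_x=\lambda_{i(x)}$, and $g_b$ is the permutation of $X$ attached to $b$ by the anti-morphism $g\colon G(X,r)\to\sym_X$ of Lemma~\ref{actionsfg}). Every element of $G(X,r)$ is a product of the generators $i(y)$ and their $\cdot$-inverses $i(y)^{-1}$, and the case $b=1$ is immediate since $\lambda_x(1)=1$ and $g_1=\id$. So the plan is an induction on the length of a word representing $b$: I will prove $P$ on both kinds of generators and show that $\{b:P(b)\text{ holds}\}$ is closed under the product $\cdot$. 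Throughout I will use freely the identity $u\cdot v=u\star\lambda_u(v)$ (which is just the definition $\lambda_u(v)=u^\star\star(u\cdot v)$ rearranged) and its consequence $\lambda_a(u\cdot v)=\lambda_a(u)\star\lambda_{a\cdot u}(v)$, obtained from parts (i) and (ii) of Lemma~\ref{propieLambda}.

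\textbf{The positive generator.} For $b=i(y)$ the defining relation of $G(X,r)$ gives $i(x)\cdot i(y)=i(f_x(y))\cdot i(g_y(x))$, while Lemma~\ref{actionsfg} gives $\lambda_x(i(y))=i(f_x(y))$ and $g_{i(y)}=g_y$. Comparing the two sides yields $P(i(y))$ at once.

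\textbf{Closure under products (the engine).} Assume $P(b)$ and $P(c)$. Applying $P(b)$, then $P(c)$ at the point $g_b(x)$, and using that $g$ is an anti-morphism so $g_{b\cdot c}=g_c\circ g_b$, one gets
\begin{align*}
i(x)\cdot(b\cdot c)=\big(\lambda_x(b)\cdot i(g_b(x))\big)\cdot c=\lambda_x(b)\cdot\lambda_{g_b(x)}(c)\cdot i(g_{b\cdot c}(x)).
\end{align*}
It remains to identify the first two factors with $\lambda_x(b\cdot c)$. Writing $\lambda_x(b)\cdot\lambda_{g_b(x)}(c)=\lambda_x(b)\star\lambda_{\lambda_x(b)\cdot i(g_b(x))}(c)$ (definition of $\cdot$ and part (ii) of Lemma~\ref{propieLambda}) and then invoking $P(b)$ in the form $\lambda_x(b)\cdot i(g_b(x))=i(x)\cdot b$, this becomes $\lambda_{i(x)}(b)\star\lambda_{i(x)\cdot b}(c)$, which is exactly $\lambda_x(b\cdot c)$ by the identity $\lambda_a(u\cdot v)=\lambda_a(u)\star\lambda_{a\cdot u}(v)$. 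Hence $P(b\cdot c)$ holds.

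\textbf{The inverse generator — the main obstacle.} The genuinely delicate point is $P(i(y)^{-1})$, because one must move a $\cdot$-inverse through the $\star$-automorphisms $\lambda$. Set $z=g_y^{-1}(x)$, so $g_y(z)=x$; the defining relation at $(z,y)$ gives $i(z)\cdot i(y)=i(f_z(y))\cdot i(x)$ and hence $i(x)\cdot i(y)^{-1}=i(f_z(y))^{-1}\cdot i(z)$. Since $g_{i(y)^{-1}}=g_y^{-1}$, the assertion $P(i(y)^{-1})$ reduces, after cancelling $i(z)$, to the identity $\lambda_x(i(y)^{-1})=i(f_z(y))^{-1}$. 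To prove it, put $a=i(x)\cdot i(y)^{-1}$, so that $a\cdot i(y)=i(x)$; the special case $u=i(y)$, $v=i(y)^{-1}$ of the identity above gives $\lambda_{i(x)}(i(y)^{-1})=\lambda_a(i(y))^\star$, reducing everything to computing $\lambda_a(i(y))$. Using $a=i(f_z(y))^{-1}\cdot i(z)$ together with parts (ii)--(iii) of Lemma~\ref{propieLambda} and $\lambda_z(i(y))=i(f_z(y))$, one finds $\lambda_a(i(y))=\lambda_{i(f_z(y))^{-1}}(i(f_z(y)))$, which by the identity $c^{-1}=\lambda_{c^{-1}}(c)^\star$ (noted after the definition of ideal) equals $\big(i(f_z(y))^{-1}\big)^\star$. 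Taking $\star$-inverses once more yields $\lambda_x(i(y)^{-1})=i(f_z(y))^{-1}$, as required. With $P$ established on all generators and stable under $\cdot$, induction on word length gives $P(b)$ for every $b\in G(X,r)$, completing the proof.
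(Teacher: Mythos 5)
Your proof is correct, but it takes a genuinely different route from the paper's. The paper's argument is two lines: it invokes the map $\gamma$ of Lemma~\ref{propieg}, uses the general skew-brace identity $a\cdot b=\lambda_a(b)\cdot\gamma_b(a)$ (immediate from $\gamma_b(a)=\lambda_a(b)^{-1}\cdot a\cdot b$), and then asserts that $\gamma_b(i(x))=i(g_b(x))$ for all $b\in G(X,r)$, leaving this last compatibility as "easy to check." Your argument bypasses $\gamma$ entirely and runs a direct induction on word length: the defining relation of $G(X,r)$ handles the generators $i(y)$, the anti-morphism property of $g$ together with $\lambda_a(u\cdot v)=\lambda_a(u)\star\lambda_{a\cdot u}(v)$ handles closure under products, and the inverse generators require the separate computation $\lambda_x(i(y)^{-1})=i(f_z(y))^{-1}$ with $z=g_y^{-1}(x)$, which you carry out correctly via $\lambda_{c^{-1}}(c)=(c^{-1})^\star$. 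What your approach buys is self-containedness: the paper's "easy to check" step $\gamma_b(i(x))=i(g_b(x))$ is, once unpacked, essentially the same induction you perform (check on generators via the relation $i(x)\cdot i(y)=i(f_x(y))\cdot i(g_y(x))$, then extend using that both $b\mapsto\gamma_b$ and $b\mapsto g_b$ are anti-morphisms), so you have made explicit precisely the work the paper elides. What the paper's route buys is brevity and a conceptual framing — the lemma becomes the statement that the abstract brace-theoretic map $\gamma$ restricts to the combinatorial map $g$ on the image of $X$ — which is arguably the more illuminating way to remember the result. Both are valid; yours is the one a referee could verify without filling in gaps.
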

\begin{proof}
Recall the action $\gamma$ appearing in Lemma~\ref{propieg}. In any skew left brace, it is true that 
$a\cdot b=\lambda_a(b)\cdot \gamma_b(a)$ for any pair of elements $a,b$. In the case of $G(X,r)$, it is easy to check (using 
the relations of $A(X,r)$ and Lemmas~\ref{actionsfg} and \ref{braceStruct}) that $\gamma_b(i(x))=i(g_b(x))$ for any 
$x\in X$ and $b\in G(X,r)$. Hence $i(x)\cdot b=\lambda_x(b)\cdot \gamma_b(x)=\lambda_x(b)\cdot i(g_b(x)).$
\end{proof}

Next we define another fundamental group for the study of non-degenerate solutions. It can be defined 
directly using the maps $f_x$ and $g_x$, but it is also a quotient of the structure group $G(X,r)$. 
But first, we need a new action of $G(X,r)$ over $X$, which is defined using $\lambda$ and $g$. 
Abusing of the notation, we denote $i(x)$, $x\in X$, by $x$ when it is clear that we are working inside 
$G(X,r)$ and not in $X$. 

\begin{lemma}
The map $\stackrel{\sim}{g}: G(X,r)\to\sym_X$ defined by $\stackrel{\sim}{g}_b(x):=g_{\lambda^{-1}_{x}(b)}(x)$, 
for any $b\in G(X,r)$ and $x\in X$, is an anti-morphism of groups. 
\end{lemma}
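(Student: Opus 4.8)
The plan is to verify the anti-morphism law pointwise, namely to show $\stackrel{\sim}{g}_{b\cdot c}(x)=\stackrel{\sim}{g}_c(\stackrel{\sim}{g}_b(x))$ for all $x\in X$ and $b,c\in G(X,r)$, and then to deduce bijectivity of each $\stackrel{\sim}{g}_b$ formally. First I would unwind both sides using the definition of $\stackrel{\sim}{g}$ together with the fact (Lemma~\ref{actionsfg}) that $g\colon G(X,r)\to\sym_X$ is a genuine anti-morphism, so $g_{v\cdot u}=g_u\circ g_v$. Setting $y:=\stackrel{\sim}{g}_b(x)=g_{\lambda_x^{-1}(b)}(x)$, the right-hand side becomes $g_{\lambda_y^{-1}(c)}\bigl(g_{\lambda_x^{-1}(b)}(x)\bigr)=g_{\lambda_x^{-1}(b)\cdot\lambda_y^{-1}(c)}(x)$, while the left-hand side is $g_{\lambda_x^{-1}(b\cdot c)}(x)$. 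Hence everything reduces to the single brace-theoretic identity
\[
\lambda_x^{-1}(b\cdot c)=\lambda_x^{-1}(b)\cdot\lambda_y^{-1}(c),\qquad y=g_{\lambda_x^{-1}(b)}(x).
\]

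The main obstacle is precisely this identity, and I expect it to be the crux of the argument: since $\lambda_x$ is an automorphism of $(B,\star)$ and \emph{not} of $(B,\cdot)$, one cannot distribute $\lambda_x^{-1}$ across the product $b\cdot c$ naively, and moreover the base point is forced to shift from $x$ to $y$. Controlling that shift is exactly what Lemma~\ref{lemmaxb} does, so that lemma should be the engine of the proof. Concretely, I would set $a_1:=\lambda_x^{-1}(b)$ and $a_2:=\lambda_y^{-1}(c)$, so the displayed identity becomes $\lambda_x(a_1\cdot a_2)=b\cdot c$, and compute $\lambda_x(a_1\cdot a_2)$ from the inside out. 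Using the general brace identity $a_1\cdot a_2=a_1\star\lambda_{a_1}(a_2)$ together with $\lambda_x\in\aut(B,\star)$ and Lemma~\ref{propieLambda}(ii), I get $\lambda_x(a_1\cdot a_2)=\lambda_x(a_1)\star\lambda_{x\cdot a_1}(a_2)=b\star\lambda_{x\cdot a_1}(a_2)$.

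The key simplification is then to evaluate $\lambda_{x\cdot a_1}$. By Lemma~\ref{lemmaxb}, $i(x)\cdot a_1=i(x)\cdot\lambda_x^{-1}(b)=\lambda_x(\lambda_x^{-1}(b))\cdot i(g_{\lambda_x^{-1}(b)}(x))=b\cdot i(y)$, whence $\lambda_{x\cdot a_1}=\lambda_b\circ\lambda_y$ again by Lemma~\ref{propieLambda}(ii). Therefore $\lambda_{x\cdot a_1}(a_2)=\lambda_b(\lambda_y(\lambda_y^{-1}(c)))=\lambda_b(c)$, and so $\lambda_x(a_1\cdot a_2)=b\star\lambda_b(c)=b\cdot c$, the last step being $u\star\lambda_u(v)=u\cdot v$. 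This establishes the displayed identity and hence $\stackrel{\sim}{g}_{b\cdot c}=\stackrel{\sim}{g}_c\circ\stackrel{\sim}{g}_b$.

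Finally, to see that $\stackrel{\sim}{g}$ really lands in $\sym_X$, I would note $\stackrel{\sim}{g}_1(x)=g_{\lambda_x^{-1}(1)}(x)=g_1(x)=x$, so $\stackrel{\sim}{g}_1=\id$; combined with the anti-morphism law, $\stackrel{\sim}{g}_{b^{-1}}\circ\stackrel{\sim}{g}_b=\stackrel{\sim}{g}_{b\cdot b^{-1}}=\id$ and symmetrically, so each $\stackrel{\sim}{g}_b$ is invertible. One caveat to keep in mind throughout is that $i=i_G$ need not be injective, so I am careful to derive equalities of the maps $g_{(-)}$ and of elements of $G(X,r)$, rather than of preimages in $X$; the argument above does exactly this, since the displayed identity is an equality in $G(X,r)$ to which $g_{(-)}(x)$ is only applied afterwards.
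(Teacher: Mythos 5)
Your proof is correct and follows essentially the same route as the paper: both reduce the anti-morphism law to the identity $\lambda_x^{-1}(b\cdot c)=\lambda_x^{-1}(b)\cdot\lambda_{y}^{-1}(c)$ with $y=\stackrel{\sim}{g}_b(x)$, prove it via Lemma~\ref{lemmaxb} together with $\lambda$ being a $\star$-automorphism satisfying $\lambda_{u\cdot v}=\lambda_u\lambda_v$, and then obtain bijectivity from $\stackrel{\sim}{g}_{b^{\pm 1}}$ composing to the identity. The only difference is cosmetic (you apply $\lambda_x$ to the product rather than expanding $\lambda_x^{-1}(b\cdot c)$ through the $\star$ operation), and your explicit caveat about the non-injectivity of $i$ is a sound precaution.
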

\begin{proof}
First we shall check that, for any $b,b'\in G(X,r)$, the maps $\stackrel{\sim}{g}_b,\stackrel{\sim}{g}_{b'}: X\to X$ 
satisfy $\stackrel{\sim}{g}_b\circ \stackrel{\sim}{g}_{b'}=\stackrel{\sim}{g}_{b'\cdot b}$ . 
Indeed, for any $x\in X$, the left-hand side is equal to 
\begin{align*}
\stackrel{\sim}{g}_b( \stackrel{\sim}{g}_{b'}(x))&=\stackrel{\sim}{g}_b(g_{\lambda^{-1}_{x}(b')}(x))
=g_{\lambda^{-1}_{g_{\lambda^{-1}_x(b')}(x)}(b)} g_{\lambda^{-1}_x(b')}(x)\\
&=g_{\lambda^{-1}_x(b')\cdot \lambda^{-1}_{g_{\lambda^{-1}_x(b')}(x)}(b)}(x), 
\end{align*}
and the right-hand side is equal to 
\begin{align*}
\stackrel{\sim}{g}_{b'\cdot b}(x)&=g_{\lambda^{-1}_{x}(b'\cdot b)}(x)=g_{\lambda^{-1}_{x}(b'\star \lambda_{b'}(b))}(x)\\
&=g_{\lambda^{-1}_{x}(b')\star \lambda^{-1}_{x}(\lambda_{b'}(b))}(x)=
g_{\lambda^{-1}_{x}(b')\cdot \lambda^{-1}_{\lambda^{-1}_{x}(b')} \lambda^{-1}_{x}\lambda_{b'}(b)}(x)\\
&=g_{\lambda^{-1}_x(b')\cdot \lambda^{-1}_{g_{\lambda^{-1}_x(b')}(x)}(b)}(x),
\end{align*}
where in the last step we apply that the equality $x\cdot \lambda^{-1}_x(b')=b'\cdot g_{\lambda^{-1}_x(b')}(x)$ is satisfied 
in $G(X,r)$ (Lemma~\ref{lemmaxb}). So we conclude $\stackrel{\sim}{g}_b( \stackrel{\sim}{g}_{b'}(x))=\stackrel{\sim}{g}_{b'\cdot b}(x).$

Finally, for any $b\in G(X,r)$, since $\stackrel{\sim}{g}_b\circ \stackrel{\sim}{g}_{b^{-1}}=
\stackrel{\sim}{g}_{b^{-1}\cdot b}=
\stackrel{\sim}{g}_{1}=\id=
\stackrel{\sim}{g}_{b}\circ \stackrel{\sim}{g}_{b^{-1}}$, we conclude that 
$\stackrel{\sim}{g}_{b}\in \sym_X$. 
\end{proof}

\begin{definition}
Let $(X,r)$ be a non-degenerate solution. The permutation group of 
$(X,r)$, denoted by $\mathcal{G}(X,r)$, is defined by
\begin{align*}
\mathcal{G}(X,r)&:=\left\langle \left(f_x,~\stackrel{\sim}{g}^{-1}_x\right):x\in X\right\rangle\\
&=\left\{\left(f_a,~\stackrel{\sim}{g}^{-1}_a\right):a\in G(X,r)\right\}
\leq \sym_X\times\sym_X.
\end{align*}
\end{definition}

In fact, the permutation group of any non-degenerate solution is not only a group, but also has the structure of a skew 
left brace.

\begin{theorem}[{\cite[Theorem~2.6]{Soloviev}}]\label{bracePerm}
Let $(X,r)$ be a non-degenerate solution. Then, the
permutation group $\mathcal{G}(X,r)$ has a structure of skew left brace.
\end{theorem}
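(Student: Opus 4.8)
The plan is to realize $\mathcal{G}(X,r)$ as a quotient of the skew left brace $G(X,r)$ of Theorem~\ref{braceStruct}, and to transport the brace structure along that quotient. Concretely, consider the map $\pi\colon (G(X,r),\cdot)\to\sym_X\times\sym_X$ given by $\pi(a)=(f_a,\stackrel{\sim}{g}^{-1}_a)$. By Lemma~\ref{actionsfg} the map $f$ is a group morphism of $(G(X,r),\cdot)$, and by the lemma preceding the definition of $\mathcal{G}(X,r)$ the map $\stackrel{\sim}{g}$ is an anti-morphism, so that $a\mapsto\stackrel{\sim}{g}^{-1}_a$ is a morphism; hence $\pi$ is a group homomorphism. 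By the very definition of $\mathcal{G}(X,r)$ its image is exactly $\mathcal{G}(X,r)$, so $\pi$ induces a group isomorphism $(G(X,r),\cdot)/N\cong\mathcal{G}(X,r)$, where $N=\ker\pi=\{a\in G(X,r):f_a=\id_X,\ \stackrel{\sim}{g}_a=\id_X\}$. This already equips $\mathcal{G}(X,r)$ with its multiplicative group; the remaining task is to produce the operation $\star$.

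Since a quotient of a skew left brace by an ideal is again a skew left brace (the quotient construction recalled above, \cite[Lemma~2.3]{GV}), it suffices to prove that $N$ is an \emph{ideal} of the skew left brace $G(X,r)$. Being the kernel of a homomorphism out of $(G(X,r),\cdot)$, $N$ is automatically a normal subgroup of $(G(X,r),\cdot)$. What remains is to show that $N$ is a normal subgroup of $(G(X,r),\star)$ and that $\lambda_b(N)\subseteq N$ for every $b\in G(X,r)$. To do this I would translate the two defining conditions $f_a=\id$ and $\stackrel{\sim}{g}_a=\id$ into the language of the brace actions, using $\lambda_{i(x)}(i(y))=i(f_x(y))$ from Lemma~\ref{actionsfg}, the identity $\gamma_b(i(x))=i(g_b(x))$ from the proof of Lemma~\ref{lemmaxb}, the defining formula $\stackrel{\sim}{g}_b(x)=g_{\lambda^{-1}_x(b)}(x)$, and Lemma~\ref{lemmaxb} itself ($i(x)\cdot b=\lambda_x(b)\cdot i(g_b(x))$) to pass between the multiplicative picture and the derived group $(G(X,r),\star)=A(X,r)$. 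The required invariances should then reduce to checking the two conditions on the generators $i(y)$, using the morphism, respectively anti-morphism, properties together with Lemma~\ref{propieLambda}.

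The main obstacle is precisely this verification that $N$ is an ideal, and it must be carried out with care, since $N$ is \emph{not} in general the socle of $G(X,r)$. Indeed, because $i\colon X\to G(X,r)$ need not be injective, requiring $f_a(y)=y$ in $X$ is strictly stronger than requiring $i(f_a(y))=i(y)$ in $G(X,r)$, i.e. $f_a=\id_X$ is strictly stronger than $\lambda_a=\id$ on $A(X,r)$; consequently Proposition~\ref{SocleKernel} cannot simply be invoked to conclude that $N$ is an ideal. The socle morphism $\varphi(a)=(\lambda_a,\gamma_a^{-1})$ only serves as a guide, $\pi$ being its analogue over $X$ rather than over $G(X,r)$. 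Thus the delicate points are the $\star$-normality and the $\lambda$-invariance of $N$, where the permutation actions $f$ and $\stackrel{\sim}{g}$, defined through the multiplicative structure and the action on $X$, must be shown to interact correctly with the operation $\star$ and the maps $\lambda_b$, which live naturally on $G(X,r)$. Once $N$ is known to be an ideal, the operation on $\mathcal{G}(X,r)$ is forced by $\pi(a)\star\pi(b):=\pi(a\star b)$, and the skew left brace axiom is inherited from $G(X,r)$.
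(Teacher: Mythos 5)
Your overall architecture is the same as the paper's: take $\pi(a)=(f_a,\stackrel{\sim}{g}^{-1}_a)$, let $N=\ker\pi$, prove $N$ is an ideal of the skew left brace $G(X,r)$, and push the brace structure to the quotient $\mathcal{G}(X,r)\cong G(X,r)/N$ via \cite[Lemma~2.3]{GV}. The problem is that you stop exactly at the step you yourself call ``the main obstacle'': the $\star$-normality and $\lambda$-invariance of $N$ are only announced as ``should reduce to checking the two conditions on the generators,'' not actually verified. That verification is the entire content of the theorem, so as it stands the proposal has a genuine gap.

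Moreover, you talk yourself out of the observation that closes the gap. You correctly note that $f_a=\id_X$ is \emph{stronger} than $\lambda_a=\id$, but you then conclude that the socle machinery ``cannot simply be invoked.'' The inequality goes in the helpful direction: a stronger condition means a \emph{smaller} kernel, i.e.\ $N\subseteq\soc(G(X,r))$, and that containment (not equality, which indeed fails in general, as the paper's example shows) is all one needs. Concretely: $f_b=\id$ forces $\lambda_b=\id$ since $\lambda_b$ fixes the generators $i(y)$; and $\stackrel{\sim}{g}_b=\id$ combined with Lemma~\ref{lemmaxb} gives $x\cdot\lambda^{-1}_x(b)=b\cdot\stackrel{\sim}{g}_b(x)=b\cdot x$, hence $x\star b=x\cdot\lambda^{-1}_x(b)=b\cdot x=b\star\lambda_b(x)=b\star x$ for every generator $x$, so $b$ is central in $(G(X,r),\star)$. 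Thus $N\subseteq\soc(G(X,r))$. From there everything is immediate: $\star$-normality of $N$ follows from centrality in $(G(X,r),\star)$, and $\lambda$-invariance follows from Proposition~\ref{SocleKernel}(i), since for socle elements $\lambda_a(b)=a\cdot b\cdot a^{-1}$ and $N$ is already normal in $(G(X,r),\cdot)$ as a kernel. This is exactly the paper's argument; your direct generator-by-generator plan could probably be made to work, but you have not carried it out, and the socle route you dismissed is the short way through.
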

\begin{proof}
Let $I=\{b\in G(X,r): f_b=\stackrel{\sim}{g}_b=\id\}$. In other words, $I$ is the kernel of the morphism 
$\varphi: G(X,r)\to\sym_X\times\sym_X$, $b\mapsto (f_b,\stackrel{\sim}{g}^{-1}_b)$. We are going to show that 
$I$ is a brace ideal of $G(X,r)$. We already know that it is a normal subgroup of $G(X,r)$. 
In fact, $I$ is contained in $\soc(G(X,r))=\{b\in G(X,r): \lambda_b=\id,~a\star b=b\star a\text{ for any }a\in G(X,r)\}$. 
If $b\in I$, then $f_b=\id$ and $\stackrel{\sim}{g}_b=\id$. The first property implies that $\lambda_b=\id$. 
On the other hand, by Lemma~\ref{lemmaxb}, $x\cdot \lambda^{-1}_x(b)=b\cdot g_{\lambda^{-1}_x(b)}(x)=
b~\cdot \stackrel{\sim}{g}_b(x)=b\cdot x$. 
Hence, $x\star b=x\cdot \lambda^{-1}_x(b)=b\cdot x=b\star \lambda_b(x)=b\star x$. Since 
$b$ commutes with all the generators of $G(X,r)$, we conclude that $a\star b=b\star a$ for any $a\in G(X,r)$, thus $b\in\soc(G(X,r))$. 
Then $I$ is closed by lambda maps because, by Lemma~\ref{SocleKernel}(i), if $b\in I\subseteq \soc(G(X,r))$, 
$\lambda_{a}(b)=a\cdot b\cdot a^{-1}\in I$ for any $a\in G(X,r)$. 
It is a normal subgroup of $(G(X,r),\star)$ because $I$ is contained in the socle, which is 
contained in the center of $(G(X,r),\star)$. 

By definition of $\mathcal{G}(X,r)$, we have that $\mathcal{G}(X,r)=\im(\varphi)\cong (G(X,r)/I,\cdot).$
Using this isomorphism, the star product of $G(X,r)/I$ can also be defined over 
$\mathcal{G}(X,r)$. 
\end{proof}

Observe that the star operation in $\mathcal{G}(X,r)$ is given by 
$$\left(f_a,~\stackrel{\sim}{g}_a^{-1}\right)\star \left(f_b,~\stackrel{\sim}{g}_b^{-1}\right)
=\left(f_a\circ f_{f^{-1}_a(b)},~\stackrel{\sim}{g}^{-1}_a\circ \stackrel{\sim}{g}^{-1}_{f^{-1}_a(b)}\right),$$
for $a,b\in G(X,r)$. We will show in Section~\ref{sectConstSol} that $\mathcal{G}(X,r)$ with its brace structure 
is the key piece of information to study non-degenerate solutions, because we are going to show that it is possible 
to recover not only $(X,r)$ but also all the solutions with the same permutation group. Thus from a given 
permutation group of a solution we are able to construct an infinite number of solutions.

In this paper, we have defined the permutation group of a non-degenerate solution following the original definition
of \cite{Soloviev} and \cite{LYZ}, used also in \cite{Takeuchi}. In \cite{GV}, they define $\mathcal{G}(X,r)$ 
as $G(X,r)/\soc(G(X,r))$. This does not coincide with the definition of \cite{LYZ,Soloviev}, because
$I$ is always contained in $\soc(G(X,r))$, but in some cases, $I\neq \soc(G(X,r))$, as the following example shows.  

\begin{example}
Let $X=\Z$ be the set of integer numbers, and let $f: X\to X$ be the map $f(x)=x+1$, $x\in X$. 
Then, the map $r:X^2\to X^2$, where $r(x,y)=(f(y),f(x))$, is a non-degenerate solution of the 
Yang-Baxter equation. Its structure group is isomorphic to the group $G$ given by the presentation
$$
G=\left\langle x_i,i\in\Z : x_i\cdot x_j=x_{j+1}\cdot x_{i+1}\right\rangle, 
$$
but, due to the relations of $G$, $x_{i-1}=x_{i+1}$ for any $i\in \Z$, so 
$G$ is isomorphic to the group 
$$
\left\langle x_0,x_1: x_0^2=x_1^2\right\rangle. 
$$
In the same way, one shows that the derived group $A(X,r)$ is isomorphic to $\Z^2$.

In this case, $f_x=f$ and $\stackrel{\sim}{g}^{-1}_x=f^{-1}$ for any $x\in X$. 
So $$\mathcal{G}(X,r)=\left\langle\left(f_x,~\stackrel{\sim}{g}^{-1}_x\right):x\in X\right\rangle
=\left\langle\left(f,~f^{-1}\right)\right\rangle\cong \Z.$$ 
But, on the other hand, $\lambda:A(X,r)\to A(X,r)$ induced by $f$ is given by $\lambda(x_0)=x_1$ and $\lambda(x_1)= x_0$, 
and the morphism $h: (G(X,r),\cdot)\to \aut(G(X,r),\star)$ of Lemma~\ref{SocleKernel} is given by $h_a(b)=a\star \lambda_a(b)\star a^\star=
\lambda_a(b)=\lambda(b)$. 
Therefore, by Lemma~\ref{SocleKernel},
$G(X,r)/\soc(G(X,r))\cong\Z/(2)$, which finally shows $$\mathcal{G}(X,r)=G(X,r)/I\not\cong G(X,r)/\soc(G(X,r))$$
in this example; equivalently, $I\neq \soc(G(X,r))$. 
\end{example}

In fact, the skew left brace $G(X,r)/\soc(G(X,r))$ of \cite{GV} is a quotient of our $\mathcal{G}(X,r)$. 
The definition of \cite{GV} has the disadvantage that the results of Section~\ref{sectConstSol} about construction of solutions 
are not possible with their definition; this is the reason why we prefer to use the more classical definition of \cite{LYZ,Soloviev}. 
A natural question in view of the last examples is the following.

\begin{openProblem}
Is it true that $I=\soc(G(X,r))$ when $X$ is finite?
\end{openProblem}

\subsection{Some results about $G(X,r)$ and $A(X,r)$}\label{sectGroupTheory}
Since $I$ is contained in $\soc(G(X,r))$, 
which in turn is contained in the center of $A(X,r)$, it is an abelian subgroup of $G(X,r)$, 
and since $\mathcal{G}(X,r)\cong G(X,r)/I$ 
is a subgroup of $\sym_X\times\sym_X$, we obtain immediately the following result about 
the group structure of $G(X,r)$ when $X$ is finite. 

\begin{corollary}[{\cite[Theorem~2.6]{Soloviev}, \cite[Proposition~10]{LYZ}}]\label{abelianbyfinite}
Let $(X,r)$ be a finite non-degenerate solution. Then, $G(X,r)$ and $A(X,r)$ are finitely generated 
abelian-by-finite groups. Moreover, $A(X,r)$ has a central subgroup of finite index. 
\end{corollary}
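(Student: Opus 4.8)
The plan is to build everything on the subgroup $I=\ker\varphi$ introduced in the proof of Theorem~\ref{bracePerm}, exploiting that it lies inside the socle. First I would dispose of finite generation: by their defining presentations, both $G(X,r)$ and $A(X,r)$ are generated by the finite set $\{i(x):x\in X\}$, so they are finitely generated. Next I would show that $I$ has finite index in $(G(X,r),\cdot)$. The morphism $\varphi:(G(X,r),\cdot)\to\sym_X\times\sym_X$ has kernel $I$ and image $\mathcal{G}(X,r)$, so $(G(X,r),\cdot)/I\cong\mathcal{G}(X,r)\le\sym_X\times\sym_X$. Since $X$ is finite, $\sym_X\times\sym_X$ is finite, whence $[G(X,r):I]=\ord{\mathcal{G}(X,r)}<\infty$.

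From the proof of Theorem~\ref{bracePerm} we have $I\subseteq\soc(G(X,r))$, and by Proposition~\ref{SocleKernel}(i) the socle is an abelian subgroup of $(G(X,r),\cdot)$ (it is a trivial brace) that is contained in the center of $(G(X,r),\star)=A(X,r)$. In particular $I$ is abelian, and it is normal in $(G(X,r),\cdot)$ because it is a kernel. Thus $G(X,r)$ has an abelian normal subgroup of finite index, i.e. it is abelian-by-finite. For $A(X,r)$ I would transfer both facts to the $\star$-operation: since $I$ is an ideal it is in particular a left ideal, so $\lambda_a(I)=I$ for every $a$, and combining this with the brace identity $a\cdot b=a\star\lambda_a(b)$ yields $a\cdot I=a\star\lambda_a(I)=a\star I$ for all $a$. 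Hence the $\cdot$-cosets and the $\star$-cosets of $I$ coincide, so $[A(X,r):I]=[G(X,r):I]=\ord{\mathcal{G}(X,r)}<\infty$ as well. Since $I$ sits in the center of $A(X,r)$, this exhibits $I$ as a central subgroup of $A(X,r)$ of finite index, which in particular makes $A(X,r)$ abelian-by-finite. This establishes all three assertions.

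The only point that is not purely formal is the passage from ``$I$ has finite index in the multiplicative group'' to ``$I$ has finite index in the star group'', because coset partitions depend on which operation is used. The resolution is the left-ideal identity $\lambda_a(I)=I$, which forces the two partitions to agree; equivalently, $A(X,r)/I$ is precisely the star group of the finite quotient brace $\mathcal{G}(X,r)\cong G(X,r)/I$, so this is exactly the observation that makes the quotient brace well-defined and requires no work beyond the brace axioms. Everything else reduces to the finiteness of $\sym_X\times\sym_X$ and the abelianity of the socle already recorded in Proposition~\ref{SocleKernel}.
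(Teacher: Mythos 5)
Your argument is correct and is essentially the paper's own proof: the paper derives the corollary in the sentence immediately preceding it, from exactly the same two facts you use, namely that $I\subseteq\soc(G(X,r))$ is abelian and central in $A(X,r)$, and that $G(X,r)/I\cong\mathcal{G}(X,r)\leq\sym_X\times\sym_X$ is finite. Your extra care in checking that the $\cdot$-cosets and $\star$-cosets of $I$ coincide (via $\lambda_a(I)=I$) is a point the paper leaves implicit in the well-definedness of the quotient brace, and it is handled correctly.
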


In fact, in \cite[Section~2.4]{Soloviev}, it is described the rank of the finitely generated 
abelian group $\soc(G(X,r))$. It is always less than or equal to $n=\ord{X}$, and equality holds
if and only if $(X,r)$ is involutive. 

We can also determine when $A(X,r)$ is abelian. If $(X,r)$ is a non-degenerate solution, we define 
$\overline{r}: i(X)^2\to i(X)^2$ by $\overline{r}(i(x),i(y))=(i(f_x(y)),~i(g_y(x)))$, for any 
$x,y\in X$. It is not difficult to check that it is a well-defined map, and that it defines a 
non-degenerate solution over $i(X)$. 

\begin{proposition}[{\cite[Proposition~4]{LYZ}}]
Let $(X,r)$ be a non-degenerate solution. The following conditions are equivalent:
\begin{enumerate}[(i)]
\item $A(X,r)$ is abelian.
\item $A(X,r)$ is free abelian with basis $i(X)$.
\item $\overline{r}$ is involutive. 
\end{enumerate}
\end{proposition}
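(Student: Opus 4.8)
The plan is to prove the cycle $(ii)\Rightarrow(i)\Rightarrow(iii)\Rightarrow(ii)$. The implication $(ii)\Rightarrow(i)$ is immediate, since a free abelian group is in particular abelian, so the whole content lies in the other two steps.

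For $(i)\Rightarrow(iii)$, I would simply read the information off the defining relations of the derived group. Recall that $A(X,r)$ is presented by the relations $i(x)\star i(f_x(y))=i(f_x(y))\star i(f_{f_x(y)}g_y(x))$ for $x,y\in X$. If $A(X,r)$ is abelian, the left-hand side also equals $i(f_x(y))\star i(x)$, and cancelling $i(f_x(y))$ on the left yields $i(f_{f_x(y)}g_y(x))=i(x)$ for all $x,y$. In terms of $\overline{r}$ this says precisely that $\overline{r}^2$ fixes the first coordinate. The remaining point is that a non-degenerate solution whose square fixes the first coordinate is automatically involutive: writing $\overline{r}^2(i(x),i(y))=(i(x),T_{i(x)}(i(y)))$ and comparing $\overline{r}\circ\overline{r}^2$ with $\overline{r}^2\circ\overline{r}$ when computing $\overline{r}^3$, the equality of first coordinates together with the injectivity of the maps $\overline{f}$ (which holds by non-degeneracy) forces $T_{i(x)}=\id$, i.e.\ $\overline{r}^2=\id$. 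This last observation is the only genuinely non-formal step in this direction, and it is where non-degeneracy is used.

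For $(iii)\Rightarrow(ii)$, the key observation is that $\overline{r}$ carries essentially the \emph{same} defining relations as $r$: under the correspondence $x\leftrightarrow i(x)$, each defining relation of $A(X,r)$ matches a defining relation of the derived group $A(i(X),\overline{r})$ of the solution $\overline{r}$, and distinct pairs $(x,y)$ giving the same pair $(i(x),i(y))$ only produce redundant relations. The assignments $x\mapsto i(x)$ and $i(x)\mapsto i(x)$ therefore set up mutually inverse morphisms, exhibiting an isomorphism $A(X,r)\cong A(i(X),\overline{r})$ (valid even without involutivity). I would thus replace $A(X,r)$ by $A(i(X),\overline{r})$. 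Now, when $\overline{r}$ is involutive one has the \emph{honest} equalities $\overline{f}_{\overline{f}_a(b)}\overline{g}_b(a)=a$ inside the set $i(X)$, so in the presentation of $A(i(X),\overline{r})$ the relation indexed by $(a,b)$ collapses to the genuine commutation relation $[a]\star[\overline{f}_a(b)]=[\overline{f}_a(b)]\star[a]$. Since each $\overline{f}_a$ is a bijection of $i(X)$, letting $b$ vary shows that all generators commute and that no further relations are imposed; hence $A(i(X),\overline{r})$ is the free abelian group on the set $i(X)$, and transporting along the isomorphism gives that $A(X,r)$ is free abelian with basis $i(X)$.

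The hard part will be this final step, and more precisely the realization that one cannot argue freeness inside $A(X,r)$ directly: there the symbol $f_{f_x(y)}g_y(x)$ need not be the symbol $x$, even once their images in $A(X,r)$ coincide, so the relations cannot be recognised as plain commutation relations. Passing to the solution $\overline{r}$ on $i(X)$ is exactly what converts the involutivity identities into literal equalities of generators, after which the commutation relations—and hence the free abelian structure with basis $i(X)$—can be read off directly.
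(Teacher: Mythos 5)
Your proposal is correct and follows essentially the same route as the paper: the heart of the argument in both is that, once $\overline{r}$ is involutive, applying $i$ turns every defining relation of $A(X,r)$ into a commutation relation among elements of $i(X)$, whence the group is free abelian with basis $i(X)$ (the paper packages this as a pair of mutually inverse morphisms with $\Z^{(i(X))}$, you as an isomorphism with $A(i(X),\overline{r})$ followed by reading off its presentation — the same computation either way). The only genuine addition is your explicit verification that the first-coordinate identity $i(f_{f_x(y)}g_y(x))=i(x)$ already forces $\overline{r}^2=\id$, via comparing first components of $\overline{r}\circ\overline{r}^2=\overline{r}^2\circ\overline{r}$ and the injectivity of $\overline{f}_a$; the paper declares the equivalence $(i)\Leftrightarrow(iii)$ straightforward and omits this step.
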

\begin{proof}
The implications $(i)\Leftrightarrow (iii)$ and $(ii)\Rightarrow (i)$ are straightforward. 
For the implication $(i)\Rightarrow (ii)$, the map 
$$
\begin{array}{ccl}
X& \longrightarrow& \langle i(X):i(x)\cdot i(y)=i(y)\cdot i(x)\rangle=H\cong \Z^{(i(X))}\\
x& \mapsto& i(x)
\end{array}
$$
induces a morphism of groups $A(X,r)\to H$ because it preserves the relations of $A(X,r)$: 
since $A(X,r)$ is abelian, we know that $\overline{r}$ is involutive, which implies 
$i(f_{f_x(y)}g_y(x))=i(x)$ for any $x,y\in X$. Thus, if we apply $i$ to a relation
$f_x(y)^\star\star x^\star\star f_x(y)\star f_{f_x(y)}g_y(x)$ of $A(X,r)$, we obtain
$i(f_x(y))^\star\star i(x)^\star\star i(f_x(y))\star i(f_{f_x(y)}g_y(x))=
i(f_x(y))^\star\star i(x)^\star\star i(f_x(y))\star i(x)
=i(f_x(y))^\star\star i(x)^\star\star i(x)\star i(f_x(y)).$
This morphism is in fact bijective, with inverse morphism induced by the inclusion map 
$i(X)\to A(X,r)$. 
\end{proof}

In the finite case, it is also possible to characterize when $A(X,r)$ is torsion-free.

\begin{proposition}
Let $(X,r)$ be a finite non-degenerate solution. Then, 
$A(X,r)$ is torsion-free if and only if $A(X,r)$ is abelian. 
\end{proposition}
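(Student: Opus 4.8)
The easy direction is immediate and requires no new work: if $A(X,r)$ is abelian, then by the preceding proposition it is free abelian with basis $i(X)$, and free abelian groups are torsion-free. So I would dispose of $(\Leftarrow)$ in one line.

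For the converse, the plan is to combine the structural information already extracted in Corollary~\ref{abelianbyfinite} with a classical group-theoretic theorem of Schur. By that corollary, $A(X,r)$ has a central subgroup $N$ of finite index. In particular $N \subseteq Z(A(X,r))$, so the center itself has finite index: $A(X,r)/Z(A(X,r))$ is a quotient of the finite group $A(X,r)/N$ and is therefore finite. At this point I would invoke Schur's theorem, which asserts that whenever the center of a group has finite index, the commutator subgroup is finite. This yields that $[A(X,r),A(X,r)]$ is a finite group.

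To finish, suppose $A(X,r)$ is torsion-free. A torsion-free group contains no nontrivial finite subgroup, since every element of a finite subgroup has finite order while in a torsion-free group only the identity does. Applying this to the finite subgroup $[A(X,r),A(X,r)]$ forces $[A(X,r),A(X,r)] = \{1\}$, i.e.\ $A(X,r)$ is abelian, which closes the equivalence.

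The main thing to get right is recognizing that the correct tool here is Schur's theorem rather than anything specific to braces; the brace-theoretic content is entirely front-loaded into Corollary~\ref{abelianbyfinite}, and the rest is a short deduction. The one point I would be careful to verify is that $A(X,r)$ is genuinely \emph{central}-by-finite and not merely abelian-by-finite, since Schur's hypothesis is about the finite index of the center specifically; fortunately Corollary~\ref{abelianbyfinite} explicitly supplies a \emph{central} subgroup of finite index (coming from $\soc(G(X,r))$, which lies in the center of $A(X,r)$), so this hypothesis is met directly and no extra argument is needed.
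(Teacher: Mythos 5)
Your proof is correct, and it takes a genuinely different route from the paper's. Both arguments hinge on the same key input from Corollary~\ref{abelianbyfinite}, namely that $A(X,r)$ has a central subgroup of finite index, but from there you diverge: the paper first notes that this central subgroup is finitely generated (finite-index subgroups of finitely generated groups are finitely generated), hence, under the torsion-free hypothesis, free abelian, and then invokes a result of Lyndon--Schupp (Proposition~III.7.3 of their book) to conclude that $A(X,r)$ itself is free abelian; you instead pass through Schur's theorem, deducing that $[A(X,r),A(X,r)]$ is finite and therefore trivial by torsion-freeness. Your route is shorter and avoids the finite-generation bookkeeping entirely --- Schur's theorem needs only that the center has finite index, so the finiteness of $X$ enters solely through the central-by-finite structure --- while the paper's route directly produces the stronger conclusion that $A(X,r)$ is free abelian (which for you follows only after a second appeal to the preceding proposition). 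You are also right to flag the central-versus-abelian distinction as the one point requiring care: an abelian subgroup of finite index would not suffice for Schur's hypothesis, and the corollary does explicitly supply a central one.
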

\begin{proof}
By the last proposition, it is enough to show that $A(X,r)$ is torsion-free if and only if $A(X,r)$ is free abelian. 
The only part that we have to prove is the implication from left to right. Using Corollary~\ref{abelianbyfinite}, if $(X,r)$ is finite, 
$A(X,r)$ is a finitely generated group that contains an abelian subgroup of finite index. Recall that finite index subgroups 
of finitely generated groups are also finitely generated, see \cite[Proposition~II.4.2]{LyndonSchupp}. Then, if $A(X,r)$ is torsion-free, 
we conclude that it contains a central finitely generated free abelian group of finite index. 
By \cite[Proposition~III.7.3]{LyndonSchupp}, 
we conclude that $A(X,r)$ is free abelian. 
\end{proof}

The following is a natural question after this result. We only know the answer for $(X,r)$ involutive \cite[Corollary~8.2.7]{JObook}. 

\begin{openProblem}
Determine when $G(X,r)$ is torsion-free, for $(X,r)$ a finite non-degenerate solution. 
\end{openProblem}

\subsection{Generating all the non-degenerate solutions associated to a skew left brace}\label{sectConstSol}

If two non-degenerate solutions $(X,r)$ and $(Y,s)$ are isomorphic, then we have $G(X,r)\cong G(Y,s)$ as braces, so 
$\mathcal{G}(X,r)\cong \mathcal{G}(Y,s)$. But the converse is not true: non-isomorphic solutions may have isomorphic 
permutation groups (as braces). In this section, we are going to show that, given a skew left brace $B$, 
it is possible to reconstruct all the non-degenerate solutions $(X,r)$ such that $\mathcal{G}(X,r)\cong B$ only using 
the brace structure of $B$. 

As a corollary, we obtain that for any skew left brace $B$, there exists a non-degenerate solution $(X,r)$ such that 
$B\cong\mathcal{G}(X,r)$. In other words, skew left braces and permutation groups are equivalent concepts. 
 We think that this and the result about construction of solutions from braces justify that 
skew left braces are the right algebraic structure to study non-degenerate solutions. 

Consider the semidirect product of groups $(G(X,r),\star)\rtimes(G(X,r),\cdot)$ with respect to the group action
$\lambda: (G(X,r),\cdot)\to\aut(G(X,r),\star)$. 
One can show that the map 
$$
\begin{array}{cccc}
\overline{\sigma}: & (G(X,r),\star)\rtimes (G(X,r),\cdot)& \to & \sym_X\\
 & (a,b)& \mapsto & \stackrel{\sim}{g}^{-1}_a\circ f^{-1}_a\circ f_b
\end{array}
$$ 
is a morphism of groups. 
Observe that now the morphism $(G(X,r),\cdot)\to \sym_X\times\sym_X$ given by $a\mapsto \left(f_a,\stackrel{\sim}{g}^{-1}_a\right)$  
(appearing in the proof of Theorem~\ref{bracePerm})
can be rewritten as $a\mapsto \left(f_a,\stackrel{\sim}{g}^{-1}_a\right)=(\overline{\sigma}_{(1,a)},\overline{\sigma}_{(a,a)}).$ 
Moreover, to relate it to the material of Section~\ref{sectSkewBraces}, note that $\Theta_{(a,b)}(i(x))=i(\overline{\sigma}_{(a,b)}(x))$, 
$a,b\in B$, $x\in X$, where $\Theta$ is defined in Lemma~\ref{propieAction}.

Let $I=\{a\in G(X,r): f_a=\stackrel{\sim}{g}_a=\id_X\}$. Recall that it is an ideal of $G(X,r)$ and 
$G(X,r)/I\cong \mathcal{G}(X,r)$ as braces. Note that 
$$N:=\{(a,b)\in (G(X,r),\star)\rtimes (G(X,r),\cdot): a,b\in I\}$$
is a normal subgroup of $(G(X,r),\star)\rtimes (G(X,r),\cdot)$ contained in $\ker(\overline{\sigma})$, and that 
$$[(G(X,r),\star)\rtimes (G(X,r),\cdot)]/N\cong (\mathcal{G}(X,r),\star)\rtimes (\mathcal{G}(X,r),\cdot).$$ 
Hence, $\overline{\sigma}$ induces a morphism 
\begin{align}\label{actionSemid}
\begin{array}{cccc}
\sigma: & (\mathcal{G}(X,r),\star)\rtimes (\mathcal{G}(X,r),\cdot)& \to & \sym_X.
\end{array}
\end{align}

This is a fundamental action of $\mathcal{G}(X,r)$. Inspired by this, the next proposition gives a first way to 
construct non-degenerate solutions from a given skew left brace. The result is known \cite[Theorem~2.7]{Soloviev}, 
but we give a restatement in terms of brace theory. 
The analogous result in the setting of non-degenerate involutive solutions appeared in 
\cite[pages 182--183]{ESS}. Note also that \cite[Theorem~4.6]{BDG2} 
(see also \cite[Corollary D]{NirBenDavid}) gives a cohomological version of it in the involutive case.  

In all this section, the semidirect product of groups $(B,\star)\rtimes (B,\cdot)$, where $B$ is a skew 
left brace, is defined with respect to the group action $\lambda: (B,\cdot)\to\aut(B,\star)$. 
Hence, the product in $(B,\star)\rtimes (B,\cdot)$ is given by $(a,b)(a',b')=(a\star\lambda_b(a'),~b\cdot b')$. 

\begin{proposition}\label{BenDavidgen}
Let $B$ be a skew left brace, and let $X$ be a set. Suppose that $\eta: X\to B$ is a map such that 
$\left\langle \eta(X)\right\rangle_\star=B$, and that 
$\sigma: (B,\star)\rtimes (B,\cdot)\to\sym_X$ is a morphism 
of groups such that $(B,\cdot)\to \sym_X\times\sym_X,$ $b\mapsto (\sigma_{(1,b)},\sigma_{(b,b)}),$ is injective, 
and 
\begin{align}\label{hypConstSol}
\eta(\sigma_{(a,b)}(x))=a\star \lambda_b(\eta(x))\star a^\star,
\end{align} for any $a,b\in B$ and $x\in X$. 
Define a map
$$
\begin{array}{cccc}
r:& X\times X& \to& X\times X\\
& (x,y)& \mapsto & (f_x(y),g_y(x)),
\end{array}
$$ 
where 
\begin{align*}
f_x(y)&:=\sigma_{(1,\eta(x))}(y)\text{, and }\\
g_y(x)&:=\sigma^{-1}_{(\eta(\sigma_{(1,\eta(x))}(y)),~\eta(\sigma_{(1,\eta(x))}(y)))}(x)=
\sigma^{-1}_{(\lambda_{\eta(x)}(\eta(y)),~\lambda_{\eta(x)}(\eta(y)))}(x).
\end{align*} 
Then $(X,r)$ is a non-degenerate 
solution such that $\mathcal{G}(X,r)\cong B$ as braces. 

Moreover, any non-degenerate solution $(Z,t)$ such that $\mathcal{G}(Z,t)\cong B$ is of this form. 
\end{proposition}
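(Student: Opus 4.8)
The plan is to distil from the hypotheses two bookkeeping identities,
$$\eta(f_x(y))=\lambda_{\eta(x)}(\eta(y)),\qquad \eta(g_y(x))=\gamma_{\eta(y)}(\eta(x)),$$
and then to read off every required property from them together with the fact that $\sigma$ is a group morphism. The first is immediate from \eqref{hypConstSol} applied to $(a,b)=(1,\eta(x))$. For the second I would write $g_y(x)=\sigma_{(w,w)^{-1}}(x)$ with $w:=\lambda_{\eta(x)}(\eta(y))=\eta(f_x(y))$, compute $(w,w)^{-1}=(\lambda_{w^{-1}}(w^\star),w^{-1})$ in the semidirect product, apply \eqref{hypConstSol} again, and simplify using $\lambda_{w^{-1}}(w)=(w^{-1})^\star$ (i.e. $a^{-1}=\lambda_{a^{-1}}(a)^\star$); this collapses to $\lambda_{w^{-1}}(w^\star\star\eta(x)\star w)$, which equals $\gamma_{\eta(y)}(\eta(x))$ by the formula $\gamma_b(a)=\lambda_a(b)^{-1}\cdot a\cdot b$ of Lemma~\ref{propieg} together with $a\cdot b=a\star\lambda_a(b)$. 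Non-degeneracy then follows: $f_x=\sigma_{(1,\eta(x))}$ is a permutation by construction, while for $g_y$ the second identity and the bijectivity of $\gamma_{\eta(y)}$ (Lemma~\ref{propieg}(ii)) recover $\eta(x)$, hence $w$, hence $x=\sigma_{(w,w)}(g_y(x))$ from $g_y(x)$; both injectivity and surjectivity of $g_y$ reduce to this reconstruction and to $\sigma$ being a morphism.

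Next I would verify the three solution conditions \eqref{SolCond1}--\eqref{SolCond3}. Each side of each condition is a composite of maps of the form $\sigma_{(1,-)}$ or $\sigma_{(-,-)}^{-1}$, so after substituting the two identities above and using that $\sigma$ is a morphism, every condition reduces to an equality of single elements of $(B,\star)\rtimes(B,\cdot)$, that is, to a pure brace identity. For \eqref{SolCond1} this is exactly $\lambda_{\eta(x)}(\eta(y))\cdot\gamma_{\eta(y)}(\eta(x))=\eta(x)\cdot\eta(y)$, and \eqref{SolCond3} reduces similarly to the anti-morphism relation $\gamma_a\circ\gamma_b=\gamma_{b\cdot a}$ of Lemma~\ref{propieg}(iii). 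The mixed condition \eqref{SolCond2}, coupling $f$ and $g$, is the one I expect to require the most care, since it unwinds into a longer identity in the semidirect product; this is the main computational obstacle, and it is precisely the content one may instead import wholesale from \cite[Theorem~2.7]{Soloviev}.

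For the brace isomorphism $\mathcal{G}(X,r)\cong B$ I would build the map the other way, sending generators $i(x)\mapsto\eta(x)$. The two identities show that $\eta$ respects both the defining relations of $(G(X,r),\cdot)$ and those of $A(X,r)=(G(X,r),\star)$, so this extends to a brace morphism $\rho:G(X,r)\to B$, surjective because $\langle\eta(X)\rangle_\star=B$. Since the induced action of $G(X,r)$ on $X$ satisfies $f_{i(x)}=\sigma_{(1,\eta(x))}$, and $b\mapsto\sigma_{(1,b)}$ is a morphism $(B,\cdot)\to\sym_X$, one gets $f_a=\sigma_{(1,\rho(a))}$ for all $a$; the parallel statement $\stackrel{\sim}{g}{}^{-1}_a=\sigma_{(\rho(a),\rho(a))}$ follows by checking it on generators and extending, using that both $a\mapsto\stackrel{\sim}{g}{}^{-1}_a$ and $b\mapsto\sigma_{(b,b)}$ are morphisms (the latter because $b\star\lambda_b(b')=b\cdot b'$). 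Establishing $\stackrel{\sim}{g}{}^{-1}_{i(x)}=\sigma_{(\eta(x),\eta(x))}$ from the definition $\stackrel{\sim}{g}_b(z)=g_{\lambda_z^{-1}(b)}(z)$ is the secondary technical point here. Granting it, $a\in\ker\rho$ forces $(\sigma_{(1,\rho(a))},\sigma_{(\rho(a),\rho(a))})=(\id,\id)$, and the injectivity hypothesis yields $\ker\rho=I$, the ideal of Theorem~\ref{bracePerm}; hence $\rho$ descends to a brace isomorphism $\mathcal{G}(X,r)=G(X,r)/I\to B$.

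Finally, for the converse I would take $X=Z$, fix a brace isomorphism $\kappa:\mathcal{G}(Z,t)\to B$, set $\eta(z):=\kappa(\overline{i(z)})$ where $\overline{i(z)}$ is the class of $i(z)$ in $G(Z,t)/I\cong\mathcal{G}(Z,t)$, and let $\sigma$ be the canonical action \eqref{actionSemid} transported along $\kappa$. Then $\langle\eta(Z)\rangle_\star=B$ because $A(Z,t)$ is $\star$-generated by $i(Z)$; the injectivity hypothesis is the faithfulness already built into the definition of $\mathcal{G}(Z,t)$, whose elements are by definition the distinct pairs $(f_a,\stackrel{\sim}{g}{}^{-1}_a)$; and \eqref{hypConstSol} is the relation $\Theta_{(a,b)}(i(x))=i(\sigma_{(a,b)}(x))$ (Lemma~\ref{propieAction} and the remark preceding Proposition~\ref{BenDavidgen}) pushed through $\kappa$, using that $\kappa$ commutes with $\star$ and with the $\lambda$-maps. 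It then remains to check that the solution reconstructed from $(\eta,\sigma)$ is $t$ itself, which follows termwise from $f_x=\sigma_{(1,\eta(x))}$ and the parallel identity for $g$; this last matching is the only delicate point of the converse.
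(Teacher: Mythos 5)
Your proposal is essentially correct, but it is worth noting that the paper gives \emph{no} proof of Proposition~\ref{BenDavidgen} at all: it simply declares the result to be a brace-theoretic restatement of \cite[Theorem~2.7]{Soloviev}. So your reconstruction is genuinely different in that it rebuilds the argument internally from the skew-brace axioms. The two bookkeeping identities you isolate are correct: $\eta(f_x(y))=\lambda_{\eta(x)}(\eta(y))$ is \eqref{hypConstSol} at $(1,\eta(x))$, and for the second, $(w,w)^{-1}=(\lambda_{w^{-1}}(w^\star),w^{-1})$ together with \eqref{hypConstSol} does collapse to $\lambda_{w}^{-1}(w^\star\star\eta(x)\star w)=\gamma_{\eta(y)}(\eta(x))$ by the first display of Lemma~\ref{propieg}(i). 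Your reduction of the YBE conditions to brace identities is also sound, and is helped by the observation (worth making explicit) that $(w,w)(w',w')=(w\cdot w',\,w\cdot w')$ in the semidirect product, since $w\star\lambda_w(w')=w\cdot w'$; this is why \eqref{SolCond1} becomes $\lambda_a(b)\cdot\gamma_b(a)=a\cdot b$ and \eqref{SolCond3} becomes $\gamma_a\circ\gamma_b=\gamma_{b\cdot a}$, both of which are supplied by Lemma~\ref{propieg}. The one point you leave open, the mixed condition \eqref{SolCond2}, is exactly the computation the paper itself outsources to Soloviev, so your proof has the same single external dependency as the paper's citation-only treatment; carrying that computation out (it unwinds to an identity in $(B,\star)\rtimes(B,\cdot)$ equivalent to the compatibility of $\Theta$ with the product decomposition $a\cdot b=\lambda_a(b)\cdot\gamma_b(a)$) would make your argument fully self-contained and strictly more informative than the paper. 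Your identification of $\ker\rho$ with the ideal $I$ of Theorem~\ref{bracePerm}, via $f_a=\sigma_{(1,\rho(a))}$ and $\widetilde{g}^{\,-1}_a=\sigma_{(\rho(a),\rho(a))}$, and your converse via the canonical action \eqref{actionSemid}, both check out against the surrounding text (in particular the remark $\Theta_{(a,b)}(i(x))=i(\overline{\sigma}_{(a,b)}(x))$ preceding the proposition).
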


As we have seen in the previous proposition, given a skew left brace
$B$, to construct a non-degenerate solution $(X,r)$ such that
$\mathcal{G}(X,r)\cong B$, it is enough to find two maps $\eta\colon
X\longrightarrow B$ and $\sigma\colon (B,\star)\rtimes (B,\cdot)\longrightarrow
\sym_X$ satisfying some properties. Next, we show
how to construct the sets $X$ and the maps $\eta$ and $\sigma$ using
only information about the structure of the skew left brace $B$.

Let $G=(B,\star)\rtimes (B,\cdot)$. Consider the action $\Theta : G \rightarrow \aut(B,\star)$ 
defined in Lemma~\ref{propieAction}. Then, $B$ and $X$ become $G$-sets via $\Theta$ and $\sigma$, respectively.
Observe that $$\eta(\sigma_{(a,b)}(x))=a\star \lambda_b(\eta(x))\star a^\star=\Theta_{(a,b)}(\eta(x)),$$ so $\eta$ becomes 
a $G$-set morphism.

We use the structure of $G$-sets as in \cite[Chapter~1.7]{Suzuki}. We only need the following well-known properties:
\begin{enumerate}[(a)]
\item For any subgroup $H$ of $G$, the set $G/H$ of left cosets of $H$ in $G$ is a $G$-set with action $g(xH)=(gx)H$, $g,x\in G$. 
\item Any $G$-set decomposes as a disjoint union of orbits. Any orbit is isomorphic, as a $G$-set, to $G/H$ for some $H\leq G$; 
in fact, $H$ can be taken to be the stabilizer of one element of the orbit.
\item The kernel of the morphism $G\rightarrow\sym_{G/H}$ is equal to the core of $H$, defined by $\core_G(H)=\bigcap_{g\in G} gHg^{-1}$. 
It is the maximal normal subgroup of $G$ contained in $H$. In  general, the kernel of $G\rightarrow\sym_{\bigsqcup_i G/H_i}$ is equal to 
$\bigcap_i \core_G(H_i)=\core_G(\bigcap_i H_i).$ 
\item If $\eta: X\to Y$ is a surjective morphism of $G$-sets, and $Y$ decomposes as the union of orbits $Y=\bigsqcup_{i\in I} Y_i$, then 
$X$ decomposes as the union of orbits $X=\bigsqcup_{i\in I}\bigsqcup_{j\in J_i} X_{i,j}$, where $\eta(X_{i,j})=Y_i$ for any $i\in I$, $j\in J_i$. 
\end{enumerate}

Using these facts, we can translate the above theorem into something that depends only on 
the algebraic structure of $B$. We describe now the necessary ingredients. 
The stabiliser of $a\in B$ by the action $\Theta$ is
denoted $\St(a)$. For $a\in B$, let $B_a=\{\Theta_{(b,c)}(a)\mid b,c\in
B\}$ be the orbit of $a$, and let $\mathcal{O}=\{ B_a\mid a\in B\}$
be the set of orbits of the action $\Theta$.  For each $i\in
\mathcal{O}$, choose an element $a_i\in i$. Let $I$ be a subset of
$\mathcal{O}$, such that $Y=\bigcup_{i\in I}i$ satisfies $B=\langle
Y\rangle_\star$, the subgroup of $(B,\star)$ generated by $Y$. For each $i\in
I$, let $J_i$ be a non-empty set and  let $\{ K_{i,j} \}_{j\in J_i}$ be
a family of subgroups of $\St(a_i)$ such that
$$\left\{a\in B:(1,a),(a,a)\in\bigcap_{i\in I}\bigcap_{j\in J_i}\core_G(K_{i,j})\right\}=\{ 1\}.$$
Note that if one of the subgroups $K_{i,j}$ is trivial, then this
last condition is satisfied.

\begin{theorem}\label{constSol}
 With the above notation, define the set $$X:=\bigsqcup_{i\in I}\bigsqcup_{j\in
J_i}G/K_{i,j}$$ as the disjoint union of the sets of left cosets
$G/K_{i,j}$. Then, $(X,r)$ is a non-degenerate solution such that $\mathcal{G}(X,r)\cong B$ as
braces, with $r$ being the map
$$
\begin{array}{cccc}
r\colon& X\times X &\longrightarrow & X\times X\\
& (x,~y) &\mapsto
&(f_{x}(y),~g_{y}(x)),
\end{array}
$$
 where, for any $x=(b_1,c_1)K_{i_1,j_1}$ and $y=(b_2,c_2)K_{i_2,j_2}$ in $X$,
\begin{align*}
f_{x}(y)&:=(1,\Theta_{(b_1,c_1)}(a_{i_1}))\cdot(b_2,c_2)K_{i_2,j_2}\\
&=(1,b_1\star \lambda_{c_1}(a_{i_1})\star b_1^\star)\cdot(b_2,c_2)K_{i_2,j_2}
\end{align*}
 and 
\begin{align*}
g_{y}(x)&:=\left(\Theta_{(1,\Theta_{(b_1,c_1)}(a_{i_1}))}\Theta_{(b_2,c_2)}(a_{i_2}),~
\Theta_{(1,\Theta_{(b_1,c_1)}(a_{i_1}))}\Theta_{(b_2,c_2)}(a_{i_2})\right)^{-1}(b_1,c_1)K_{i_1,j_1}.
\end{align*}

Moreover, any solution $(Z,t)$ with $\mathcal{G}(Z,t)\cong B$ as
 braces is isomorphic to  such a solution.
\end{theorem}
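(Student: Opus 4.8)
The plan is to deduce Theorem~\ref{constSol} from Proposition~\ref{BenDavidgen} by manufacturing, out of the purely brace-theoretic data $(I,\{K_{i,j}\})$, a set $X$ together with a map $\eta\colon X\to B$ and a morphism $\sigma\colon G=(B,\star)\rtimes(B,\cdot)\to\sym_X$ satisfying the three hypotheses of that proposition. I would take $\sigma$ to be the natural left-translation action of $G$ on the disjoint union $X=\bigsqcup_{i\in I}\bigsqcup_{j\in J_i}G/K_{i,j}$ of coset spaces, which is a $G$-set by property~(a); and I would define $\eta$ on each component by $\eta((b,c)K_{i,j}):=\Theta_{(b,c)}(a_i)$. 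Since $K_{i,j}\leq\St(a_i)$, this assignment is independent of the chosen coset representative, so $\eta$ is well defined, and by construction it is a morphism of $G$-sets from $X$ (with action $\sigma$) to $B$ (with action $\Theta$).

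First I would verify the three hypotheses. The $G$-equivariance of $\eta$ reads $\eta(\sigma_{(a,b)}(x))=\Theta_{(a,b)}(\eta(x))=a\star\lambda_b(\eta(x))\star a^\star$, which is exactly the compatibility condition~\eqref{hypConstSol}. For the generation condition, transitivity of $\sigma$ on each $G/K_{i,j}$ gives $\eta(X)=\bigcup_{i\in I}B_{a_i}=Y$, and the standing assumption $B=\langle Y\rangle_\star$ then yields $\langle\eta(X)\rangle_\star=B$. For injectivity of $b\mapsto(\sigma_{(1,b)},\sigma_{(b,b)})$, I invoke property~(c): $\ker\sigma=\core_G\bigl(\bigcap_{i,j}K_{i,j}\bigr)=\bigcap_{i,j}\core_G(K_{i,j})$, so this map kills $b$ precisely when both $(1,b)$ and $(b,b)$ lie in $\bigcap_{i,j}\core_G(K_{i,j})$, and the hypothesis imposed on the $K_{i,j}$ says this happens only for $b=1$. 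With all hypotheses in place, Proposition~\ref{BenDavidgen} produces a non-degenerate solution with $\mathcal{G}(X,r)\cong B$; a short substitution of $\eta((b,c)K_{i,j})=\Theta_{(b,c)}(a_i)$ into the formulas of that proposition, using $\Theta_{(1,d)}=\lambda_d$, recovers the displayed $f_x$ and $g_y$.

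For the converse (``Moreover'') clause I would start from an arbitrary non-degenerate $(Z,t)$ with $\mathcal{G}(Z,t)\cong B$. By the last sentence of Proposition~\ref{BenDavidgen} it arises from some $\eta'\colon Z\to B$ and $\sigma'\colon G\to\sym_Z$; the compatibility condition makes $\eta'$ a morphism of $G$-sets, and its image is a union of $\Theta$-orbits that $\star$-generates $B$. Now I apply the $G$-set decomposition: by property~(b) each orbit of $Z$ is $G$-isomorphic to some $G/H$, and by property~(d) the orbit decomposition of $Z$ refines that of $\eta'(Z)$, so I can index the orbits as $Z_{i,j}$ with $\eta'(Z_{i,j})=B_{a_i}$. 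Choosing in each $Z_{i,j}$ a point $z_{i,j}$ with $\eta'(z_{i,j})=a_i$ and setting $K_{i,j}:=\St_G(z_{i,j})$, equivariance of $\eta'$ forces $K_{i,j}\leq\St(a_i)$; the generation and injectivity hypotheses for $(Z,t)$ translate, via properties~(c)--(d), into exactly the generation and core conditions imposed on $I$ and $\{K_{i,j}\}$. This exhibits $(Z,t)$ as $G$-isomorphic to an $X$ of the prescribed form, and a $G$-set isomorphism compatible with $\eta$ is precisely an isomorphism of solutions.

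I expect the main obstacle to be the bookkeeping in this converse direction: one must choose the orbit representatives $z_{i,j}$ and $a_i$ compatibly, so that $\eta'(z_{i,j})=a_i$ on the nose rather than merely up to the $G$-action, check that different choices produce isomorphic data, and confirm that the abstract hypotheses of Proposition~\ref{BenDavidgen} for $(Z,t)$ are genuinely equivalent to the concrete generation and core conditions, with the identity $\core_G\bigl(\bigcap_{i,j}K_{i,j}\bigr)=\bigcap_{i,j}\core_G(K_{i,j})$ serving as the linchpin. The forward direction, by contrast, is essentially a mechanical verification that the natural coset action $\sigma$ and the evaluation map $\eta$ satisfy the three hypotheses, together with the routine substitution that matches the formulas.
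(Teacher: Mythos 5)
Your proposal is correct and takes essentially the same route as the paper's own (sketched) proof: the same $\eta((b,c)K_{i,j})=\Theta_{(b,c)}(a_i)$ and the natural left-translation action $\sigma$, verification of the three hypotheses of Proposition~\ref{BenDavidgen}, and the $G$-set facts (a)--(d) for the converse. You in fact supply more of the routine detail than the paper does.
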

\begin{proof}
We are only going to sketch the main steps of the proof. For the first part of the theorem, we define 
$\eta: X\to B$ as $\eta((b,c)K_{i,j}):=\Theta_{(b,c)}(a_i)$, and we also define $\sigma: G\to \sym_X$ 
to be the natural action of $G$ on $X$ given by left multiplications on the cosets in $G/K_{i,j}$; 
i.e. $\sigma_{(b,c)}((u,v)K_{i,j}):=(b,c)(v,u)K_{i,j}=(b\star \lambda_c(u),~c\cdot v)K_{i,j}.$ Then, we only have
to check that the hypothesis of Proposition~\ref{BenDavidgen} are fulfilled for these $\eta$ and $\sigma$, and that the maps 
$f$ and $g$ defined in the statement of this theorem coincide with the ones defined in Proposition~\ref{BenDavidgen}.

To prove the second part, we only have to use the fact that any solution is of the form given by Proposition~\ref{BenDavidgen}, together 
with the facts about $G$-sets that we have mentioned previously. 

\end{proof}

As a consequence of Theorem~\ref{constSol}, given a skew left brace $B$, to
construct all the non-degenerate solutions $(X,r)$ of the Yang-Baxter equation such that
$\mathcal{G}(X,r)\cong B$ as skew left braces one can proceed as follows:
\begin{enumerate}
\item Find the decomposition of $B$ as disjoint union of orbits, 
$B=\bigcup_{i\in K} B_i$, by the action $\Theta\colon (B,\star)\rtimes (B,\cdot)\longrightarrow\aut(B,\star)$.
Then choose one element $a_i$ in each orbit $B_i$ for all $i\in K$.

\item Find all the subsets $I$ of $K$ such that the subset
$Y=\bigcup_{i\in I}B_i$ generates the star group of $B$.

\item Given such an $Y$, find for each $i\in I$ a non-empty family $\{K_{i,j}\}_{j\in
J_i}$ of subgroups of $\St(a_i)$ such that 
$$\left\{a\in B:(1,a),(a,a)\in\bigcap_{i\in I}\bigcap_{j\in J_i}\core_G(K_{i,j})\right\}=\{ 1\}.$$ 
Note that the $K_{i,j}$
could be equal for different $(i,j)$.

\item Construct a solution as in the statement of
Theorem~\ref{constSol} using the families $\{K_{i,j}\}_{j\in J_i}$, for
$i\in I$.
\end{enumerate}

Note that by Theorem~\ref{constSol}, any non-degenerate solution $(X,r)$
such that $\mathcal{G}(X,r)\cong B$  (as braces) is isomorphic
to one constructed in this way. It could happen that different
solutions constructed in this way from a skew left brace $B$
are in fact isomorphic. In the next result we characterize when two
of these solutions are isomorphic.

Let $B$ be a skew left brace and let $\mathcal{O}$, $I$, $a_i$, $J_i$,
$K_{i,j}$ be as before. Let $(X,r)$ be
the solution of the statement of Theorem~\ref{constSol}.

Let $I'\subseteq \mathcal{O}$ such that $Y'=\bigcup_{i'\in I'}i'$
satisfy $B=\langle Y'\rangle_\star$.
For each $i'\in I'$, let $\{ L_{i',j'} \}_{j'\in J'_{i'}}$ be a
non-empty family of subgroups of $\St(a_{i'})$ such that
$$\left\{a\in B:(1,a),(a,a)\in\bigcap_{i'\in I'}\bigcap_{j'\in J'_{i'}}\core_G(L_{i',j'})\right\}=\{ 1\}.$$
Let $(X',r')$ be the corresponding solution defined as in
the statement of Theorem~\ref{constSol}.
We shall characterize when $(X,r)$ and $(X',r')$ are isomorphic in
the following result.

\begin{theorem}\label{isomSol}
The solutions $(X,r)$ and $(X',r')$ are isomorphic if and only if
there exist an automorphism $\psi$ of the skew left brace $B$, a
bijective map $\alpha\colon I\rightarrow I'$, a bijective map
$\beta_i\colon J_i\rightarrow J'_{\alpha(i)}$ and $y_{i,j},z_{i,j}\in B$,
for each $i\in I$ and $j\in J_i$, such that
$$\psi(a_i)=\Theta_{(y_{i,j},z_{i,j})}(a_{\alpha(i)})
\quad\mbox{and}\quad (\psi\times\psi)(K_{i,j})=(y_{i,j},z_{i,j})L_{\alpha(i),\beta_i(j)}(y_{i,j},z_{i,j})^{-1},$$
for all $i\in I_1$ and $j\in J_i$.
\end{theorem}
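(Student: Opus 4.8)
The plan is to reduce the geometric statement about isomorphic solutions to the combinatorial data $(\psi,\alpha,\beta_i,y_{i,j},z_{i,j})$ by exploiting the $G$-set description that underlies Theorem~\ref{constSol}. Recall that each solution in that theorem is built as $X=\bigsqcup_{i\in I}\bigsqcup_{j\in J_i}G/K_{i,j}$, with $\eta$ the $G$-set morphism $\eta((b,c)K_{i,j})=\Theta_{(b,c)}(a_i)$ and $\sigma$ the natural left-multiplication action, and that $(X,r)$ determines and is determined by the pair $(\eta,\sigma)$ via Proposition~\ref{BenDavidgen}. So the first step is to show that an isomorphism of solutions $F\colon (X,r)\to(X',r')$ is the same datum as a brace automorphism $\psi$ of $B$ together with a $\psi$-equivariant bijection $F$ intertwining the two $G$-set structures, in the sense that $\eta'(F(x))=\psi(\eta(x))$ and $F(\sigma_{(a,b)}(x))=\sigma'_{(\psi(a),\psi(b))}(F(x))$. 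The forward direction here is the content of the isomorphism part of Proposition~\ref{BenDavidgen} / Theorem~\ref{constSol}: an isomorphism of solutions induces an isomorphism $\mathcal{G}(X,r)\cong\mathcal{G}(X',r')$, which, after the identifications with $B$, becomes an automorphism $\psi$ of $B$; the compatibility of $F$ with $f,g$ translates directly into the two equivariance identities above.

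With that dictionary in place, the second step is purely about morphisms of $G$-sets. The automorphism $\psi$ of $B$ induces an automorphism of $G=(B,\star)\rtimes(B,\cdot)$, namely $(a,b)\mapsto(\psi(a),\psi(b))$, which I will also call $\psi$; the $\psi$-equivariant bijection $F$ is then an ordinary $G$-set isomorphism once we twist the $G$-action on $X'$ by $\psi$. A $G$-set isomorphism must send orbits to orbits bijectively, which produces the bijection $\alpha\colon I\to I'$ on the index sets of orbits, and within the multiplicities over a fixed orbit it produces the bijections $\beta_i\colon J_i\to J'_{\alpha(i)}$; the condition $B=\langle Y\rangle_\star=\langle Y'\rangle_\star$ guarantees that the orbits appearing on both sides generate, so the matching is consistent with the $\eta$-images. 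A $G$-map $G/K_{i,j}\to G/L_{\alpha(i),\beta_i(j)}$ of transitive $G$-sets is, by the standard classification (property (a)--(b) of the $G$-set facts recalled before Theorem~\ref{constSol}), given by right translation $gK_{i,j}\mapsto g\,g_0\,L_{\alpha(i),\beta_i(j)}$ for some $g_0=(y_{i,j},z_{i,j})\in G$, and it is well-defined and bijective exactly when $g_0^{-1}K_{i,j}g_0=L_{\alpha(i),\beta_i(j)}$, i.e. $K_{i,j}=g_0\,L_{\alpha(i),\beta_i(j)}\,g_0^{-1}$; after transporting through $\psi$ this is the second displayed equation $(\psi\times\psi)(K_{i,j})=(y_{i,j},z_{i,j})L_{\alpha(i),\beta_i(j)}(y_{i,j},z_{i,j})^{-1}$.

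The third step is to match the $\eta$-values, which is what forces the first displayed equation. Compatibility $\eta'(F(x))=\psi(\eta(x))$ evaluated on the base coset $1\cdot K_{i,j}$ gives $\psi(a_i)=\psi(\eta(1K_{i,j}))=\eta'(F(1K_{i,j}))=\eta'(g_0 L_{\alpha(i),\beta_i(j)})=\Theta_{(y_{i,j},z_{i,j})}(a_{\alpha(i)})$, which is precisely $\psi(a_i)=\Theta_{(y_{i,j},z_{i,j})}(a_{\alpha(i)})$. Conversely, given the combinatorial data satisfying the two displayed identities, one reverses these steps: define $F$ on each coset block by $g\mapsto\psi^{-1}(g)g_0 L_{\alpha(i),\beta_i(j)}$ (or the appropriate twist), check using the conjugacy relation that it is a well-defined bijection and using the $\Theta$-relation that it intertwines $\eta$ and $\eta'$ and the two $\sigma$-actions, and then invoke the construction of $r$ in Theorem~\ref{constSol} from $(\eta,\sigma)$ to conclude that $F$ is an isomorphism of solutions.

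I expect the main obstacle to be the bookkeeping in the orbit-and-multiplicity matching, rather than any single hard idea. The genuinely delicate point is verifying that the $\alpha$ and $\beta_i$ obtained from $F$ are honest bijections with $\alpha(I)=I'$: a priori an orbit of $X$ could map into an orbit of $X'$ whose $\eta$-image differs, so one must use that $F$ is $\psi$-equivariant for $\eta$ to pin down $\alpha(i)$ as the orbit of $\psi(a_i)$, and then that $F$ is a bijection (not merely a morphism) to upgrade this to a bijection of index sets and to split correctly across the multiplicity sets $J_i$. One must also take care that the distinguished representatives $a_i$ and $a_{i'}$ are chosen independently on the two sides, so $\psi(a_i)$ need not equal $a_{\alpha(i)}$ on the nose but only lies in its $\Theta$-orbit — which is exactly why the element $(y_{i,j},z_{i,j})$ and the relation $\psi(a_i)=\Theta_{(y_{i,j},z_{i,j})}(a_{\alpha(i)})$ are needed, rather than a bare equality. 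Everything else reduces to the standard classification of $G$-maps between coset spaces together with the equivariance dictionary from the first step.
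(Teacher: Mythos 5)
Your outline of the combinatorial endgame is right, and your ``if'' direction coincides with the paper's (the explicit map $F((b,c)K_{i,j})=(\psi(b),\psi(c))(y_{i,j},z_{i,j})L_{\alpha(i),\beta_i(j)}$). But the converse direction has a genuine gap: your entire first step --- that an isomorphism of solutions $F$ \emph{is the same datum as} a brace automorphism $\psi$ of $B$ together with a $\psi$-equivariant bijection satisfying $\eta'(F(x))=\psi(\eta(x))$ and $F(\sigma_{(a,b)}(x))=\sigma'_{(\psi(a),\psi(b))}(F(x))$ --- is asserted by appeal to ``the isomorphism part of Proposition~\ref{BenDavidgen} / Theorem~\ref{constSol}'', and no such statement exists in those results. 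They only say that every solution with permutation brace $B$ arises from some pair $(\eta,\sigma)$; they say nothing about how two such presentations of isomorphic solutions are related. Establishing exactly this equivariance dictionary is the bulk of the paper's proof of Theorem~\ref{isomSol}, and it is not a formal translation: one must first show that $F$ carries each coset block $G/K_{i,j}$ into a \emph{single} block $G/L_{i',j'}$ (a priori the indices $\alpha(b,c,i,j)$, $\beta(b,c,i,j)$ could depend on the coset $(b,c)$, so $F$ could scatter one orbit across several blocks), which the paper proves via equations (\ref{F}) and (\ref{F2}) together with the generation hypothesis $B=\langle Y\rangle_\star$; and one must then construct $\psi$ and prove it is well defined, which the paper does by a relations argument (a product $\Theta_{(b_1,c_1)}(a_{i_1})^{\varepsilon_1}\cdots\Theta_{(b_m,c_m)}(a_{i_m})^{\varepsilon_m}=1$ must map to a product equal to $1$). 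Both steps use the hypothesis $\{a\in B:(1,a),(a,a)\in\bigcap_{i',j'}\core_G(L_{i',j'})\}=\{1\}$ in an essential way, and your proposal never invokes that condition in the converse direction --- a strong sign that the hard content has been bypassed.

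Your alternative idea for producing $\psi$ --- take the brace isomorphism $\mathcal{G}(X,r)\cong\mathcal{G}(X',r')$ induced by $F$ and transport it through the identifications with $B$ --- is a reasonable and potentially cleaner route than the paper's generators-and-relations construction, but it still leaves real work that you do not do: you must verify that conjugation by $F$ sends the generator $(f_x,\widetilde{g}_x^{-1})$ to $(f'_{F(x)},\widetilde{g}'^{-1}_{F(x)})$ \emph{in both components} (the second component requires unwinding $\widetilde{g}$ and combining the $f$- and $g$-compatibilities), that the resulting bijection of $B$ preserves the star operation and not just the multiplication, and that the identity $\eta'(F(x))=\psi(\eta(x))$ actually follows from the injectivity of $b\mapsto(\sigma'_{(1,b)},\sigma'_{(b,b)})$ --- which is where the core condition would enter. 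Until those verifications are supplied, the reduction to the classification of $G$-maps between coset spaces, which is the part you carry out correctly, does not get off the ground.
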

\begin{proof}
For the if part, 
we define a map $F:X\to X'$ by $$F((b,c)K_{i,j}):=(\psi(b),\psi(c))(y_{i,j},z_{i,j})L_{\alpha(i),\beta_i(j)},$$ for any $b,c\in B$. 
It is not difficult to check that $F$ is well-defined, and that it is an isomorphism between $(X,r)$ and $(X',r')$. 

Conversely, suppose that there exists an isomorphism $F\colon
X\rightarrow X'$ of the solutions $(X,r)$ and $(X',r')$. We can
write $F((b,c) K_{i,j})=\varphi((b,c) K_{i,j})L_{\alpha(b,c,i,j),\beta(b,c,i,j)}$,
for some maps $\varphi\colon X\rightarrow G$, $\alpha\colon
X\rightarrow I'$ and $\beta \colon X\rightarrow \bigcup_{i'\in
I'}J'_{i'}$. We shall prove that $\alpha(b,c,i,j)=\alpha(1,1,i,k)$ and
$\beta(b,c,i,j)=\beta(1,1,i,j)$, for all $b,c\in B$, $i\in I$ and $j,k\in
J_i$. Since $F$ is a morphism of solutions of the Yang-Baxter equation, the condition $F(f_x(y))=f'_{F(x)}(F(y))$ 
implies that
\begin{align}\label{F}
\lefteqn{F((1,\Theta_{(b_1,c_1)}(a_{i_1}))(b_2,c_2)K_{i_2,j_2})}\nonumber\\
&=(1,\Theta_{\varphi((b_1,c_1)K_{i_1,j_1})}(a_{\alpha(b_1,c_1,i_1,j_1)}))F((b_2,c_2)K_{i_2,j_2}),
\end{align}
for all $b_1,c_1,b_2,c_2\in B$, $i_1,i_2\in I$,  $j_1\in J_{i_1}$ and
$j_2\in J_{i_2}$. Hence
\begin{align*}\lefteqn{\varphi((\lambda_{\Theta_{(b_1,c_1)}(a_{i_1})}(b_2),\Theta_{(b_1,c_1)}(a_{i_1})c_2)
K_{i_2,j_2})}\\
&\cdot L_{\alpha(\lambda_{\Theta_{(b_1,c_1)}(a_{i_1})}(b_2),\Theta_{(b_1,c_1)}(a_{i_1})c_2,i_2,j_2),~
\beta(\lambda_{\Theta_{(b_1,c_1)}(a_{i_1})}(b_2),\Theta_{(b_1,c_1)}(a_{i_1})c_2,i_2,j_2)}
\\
&=(1,\Theta_{\varphi((b_1,c_1)K_{i_1,j_1})}(a_{\alpha(b_1,c_1,i_1,j_1)}))\varphi((b_2,c_2)K_{i_2,j_2})L_{\alpha(b_2,c_2,i_2,j_2),\beta(b_2,c_2,i_2,j_2)},
\end{align*}
for all $b_1,c_1,b_2,c_2\in B$, $i_1,i_2\in I$,  $j_1\in J_{i_1}$ and
$j_2\in J_{i_2}$. Thus
$$\alpha(\lambda_{\Theta_{(b_1,c_1)}(a_{i_1})}(b_2),\Theta_{(b_1,c_1)}(a_{i_1})\cdot c_2,i_2,j_2)=\alpha(b_2,c_2,i_2,j_2)$$
and
$$\beta(\lambda_{\Theta_{(b_1,c_1)}(a_{i_1})}(b_2),\Theta_{(b_1,c_1)}(a_{i_1})\cdot c_2,i_2,j_2)=\beta(b_2,c_2,i_2,j_2).$$ Since
$B=\langle Y\rangle_\star$ and $Y$ is $G$-invariant (by the action
$\Theta$), we know that $Y$ also generates the multiplicative group
of $B$. Therefore 
\begin{align}\label{alphac2}
\alpha(b_2,c_2,i_2,j_2)=\alpha(\lambda_a(b_2),a\cdot c_2,i_2,j_2)
\end{align} 
and
\begin{align*}
\beta(b_2,c_2,i_2,j_2)=\beta(\lambda_a(b_2),a\cdot c_2,i_2,j_2),
\end{align*} 
for any $a\in B$. 

Using in a similar way the condition $F(g_x(y))=g'_{F(x)}(F(y))$, we get that
\begin{align}\label{F2}
\lefteqn{F((\Theta_{(b_1,c_1)}(a_{i_1}),\Theta_{(b_1,c_1)}(a_{i_1}))^{-1}(b_2,c_2)K_{i_2,j_2})}\nonumber\\
&=(\Theta_{\varphi((b_1,c_1)K_{i_1,j_1})}(a_{\alpha(b_1,c_1,i_1,j_1)}),\Theta_{\varphi((b_1,c_1)K_{i_1,j_1})}(a_{\alpha(b_1,c_1,i_1,j_1)}))^{-1}F((b_2,c_2)K_{i_2,j_2}),
\end{align}
for all $b_1,c_1,b_2,c_2\in B$, $i_1,i_2\in I$,  $j_1\in J_{i_1}$ and
$j_2\in J_{i_2}$,
which in turn implies that 
\begin{align}\label{alphab2}
\alpha(b_2,c_2,i_2,j_2)=\alpha(a^{-1}\cdot b_2,a^{-1}\cdot c_2,i_2,j_2)
\end{align} and
\begin{align*}
\beta(b_2,c_2,i_2,j_2)=\beta(a^{-1}\cdot b_2,a^{-1}\cdot c_2,i_2,j_2).
\end{align*} 
Hence, 
\begin{align*}
\alpha(b_2,c_2,i_2,j_2)&=\alpha(1,b_2^{-1}\cdot c_2,i_2,j_2)\quad \text{ (by (\ref{alphab2}), with } a=b_2\text{)}\\
&=\alpha(1,1,i_2,j_2)\quad \text{ (by (\ref{alphac2}), with } a=c_2^{-1}\cdot b_2\text{)},
\end{align*}
and similarly for $\beta(b_2,c_2,i_2,j_2)=\beta(1,1,i_2,j_2)$. 

Note also that
\begin{align*}
\lefteqn{(1,\Theta_{\varphi((b_1,c_1)K_{i_1,j_1})}(a_{\alpha(b_1,c_1,i_1,j_1)}))F((b_2,c_2)K_{i_2,j_2})}\\
&=(1,\Theta_{\varphi((b_1,c_1)K_{i_1,j})}(a_{\alpha(b_1,c_1,i_1,j)}))F((b_2,c_2)K_{i_2,j_2}),
\end{align*}
and that 
\begin{align*}
\lefteqn{(\Theta_{\varphi((b_1,c_1)K_{i_1,j_1})}(a_{\alpha(b_1,c_1,i_1,j_1)}),\Theta_{\varphi((b_1,c_1)K_{i_1,j_1})}(a_{\alpha(b_1,c_1,i_1,j_1)}))^{-1}F((b_2,c_2)K_{i_2,j_2})}\\
&=(\Theta_{\varphi((b_1,c_1)K_{i_1,j})}(a_{\alpha(b_1,c_1,i_1,j)}),\Theta_{\varphi((b_1,c_1)K_{i_1,j})}(a_{\alpha(b_1,c_1,i_1,j)}))^{-1}F((b_2,c_2)K_{i_2,j_2}),
\end{align*}
for all $b_1,b_2,c_1,c_2\in B$, $i_1,i_2\in I$,  $j_1,j\in J_{i_1}$ and
$j_2\in J_{i_2}$. Since 
$$\left\{a\in B : (1,a),(a,a)\in\bigcap_{i',j'}\bigcap_{b,c\in B}(b,c)L_{i',j'}(b,c)^{-1}\right\}=\{ 1\},$$ 
we have that
$$\Theta_{\varphi((b_1,c_1)K_{i_1,j_1})}(a_{\alpha(b_1,c_1,i_1,j_1)})=\Theta_{\varphi((b_1,c_1)K_{i_1,j})}(a_{\alpha(b_1,c_1,i_1,j)}),$$
for all $b_1,c_1\in B$, $i_1\in I$ and  $j_1,j\in J_{i_1}$. Therefore
$a_{\alpha(b_1,c_1,i_1,j_1)},a_{\alpha(b_1,c_1,i_1,j)}\in \alpha(1,1,i_1,k),$
for all $b_1,c_1\in B$, $i_1\in I$ and $j_1,j\in J_{i_1}$ and thus
$\alpha(b,c,i,j)=\alpha(1,1,i,k)$, for all $b,c\in B$, $i\in I_1$ and
$j,k\in J_{i}$. Moreover, $\Theta_{\varphi(K_{i,j})}=\Theta_{\varphi(K_{i,k})}$ for any $i\in I$ and 
any $j,k\in J_i$. 

For each $i\in I$ we choose an element $j_i\in
J_{i}$. Since $F$ is bijective, the map $\alpha: I\rightarrow I'$ defined by
$i\mapsto \alpha(i):=\alpha(1,1,i,j_i)$ is bijective and for each $i\in I$ the
map $\beta_i: J_i\rightarrow J'_{\alpha(1,1,i,j_i)}$ defined by $j\mapsto
\beta_i(j):=\beta(1,1,i,j)$ is bijective.

We shall see that there exists an
automorphism $\psi$ of the skew left brace $B$ such that
$$\psi(\Theta_{(b,c)}(a_i))=\Theta_{(\psi(b),\psi(c))\varphi(K_{i,j})}(a_{\alpha(i)})$$
and
$$\psi(K_{i,j})=\varphi(K_{i,j})L_{\alpha(i),\beta_i(j)}\varphi(K_{i,j})^{-1},$$
for all $b,c\in B$, $i\in I$ and $j\in J_i$. Let
$1=\Theta_{(b_1,c_1)}(a_{i_1})^{\varepsilon_1}\cdots
\Theta_{(b_m,c_m)}(a_{i_m})^{\varepsilon_m}$, for some $b_1,c_1,\dots ,b_m,c_m\in
B$, $i_1,\dots ,i_m\in I$ and $\varepsilon_1,\dots ,\varepsilon_m\in
\{ 1,-1\}$. By (\ref{F}), we have
\begin{align*}
F((b,c)K_{i,j})&=F((1,\Theta_{(b_1,c_1)}(a_{i_1})^{\varepsilon_1}\cdots
\Theta_{(b_m,c_m)}(a_{i_m})^{\varepsilon_m})(b,c)K_{i,j})\\
&=(1,\Theta_{\varphi((b_1,c_1)K_{i_1,j_{i_1}})}(a_{\alpha(i_1)})^{\varepsilon_1})\\
&\quad F((1,\Theta_{(b_2,c_2)}(a_{i_2})^{\varepsilon_2}\cdots
\Theta_{(b_m,c_m)}(a_{i_m})^{\varepsilon_m})(b,c)K_{i,j})\\
&=\left(1,\Theta_{\varphi((b_1,c_1)K_{i_1,j_{i_1}})}(a_{\alpha(i_1)})^{\varepsilon_1}\cdots
\Theta_{\varphi((b_m,c_m)K_{i_m,j_{i_m}})}(a_{\alpha(i_m)})^{\varepsilon_m}\right)\\
&\quad F((b,c)K_{i,j}),
\end{align*}
for all $b,c\in B$, $i\in I$ and $ j\in J_i$. 
By (\ref{F2}), in a similar way we obtain 
\begin{align*}
F((b,c)K_{i,j})&=\left(\Theta_{\varphi((b_1,c_1)K_{i_1,j_{i_1}})}(a_{\alpha(i_1)})^{\varepsilon_1}\cdots
\Theta_{\varphi((b_m,c_m)K_{i_m,j_{i_m}})}(a_{\alpha(i_m)})^{\varepsilon_m},\right.\\
&\left. \Theta_{\varphi((b_1,c_1)K_{i_1,j_{i_1}})}(a_{\alpha(i_1)})^{\varepsilon_1}\cdots
\Theta_{\varphi((b_m,c_m)K_{i_m,j_{i_m}})}(a_{\alpha(i_m)})^{\varepsilon_m}\right)F((b,c)K_{i,j})
\end{align*}
for all $b,c\in B$, $i\in I$ and $ j\in J_i$. 
Since
$(1,a),(a,a)\in\bigcap_{i',j'}\core_G(L_{i',j'})$ implies that $a=1$, we have that
$$\Theta_{\varphi((b_1,c_1)K_{i_1,j_{i_1}})}(a_{\alpha(i_1)})^{\varepsilon_1}\cdots
\Theta_{\varphi((b_m,c_m)K_{i_m,j_{i_m}})}(a_{\alpha(i_m)})^{\varepsilon_m}=1.$$
Therefore there exists a unique morphism $\psi\colon B\rightarrow B$
of multiplicative groups such that
$\psi(\Theta_{(b,c)}(a_i))=\Theta_{\varphi((b,c)K_{i,j})}(a_{\alpha(i)})$.
Since $Y$ generates the multiplicative group of $B$, by (\ref{F})
one can see that
$$\varphi((b,c)K_{i,j})L_{\alpha(i),\beta_i(j)}=F((b,c)K_{i,j})=(\psi(b),\psi(c))\varphi(K_{i,j})L_{\alpha(i),\beta_i(j)}.$$
Therefore, due to $L_{\alpha(i),\beta_i(j)}\subseteq
\St(a_{\alpha(i)})$, we have
$$\Theta_{\varphi((b,c)K_{i,j})}(a_{\alpha(i)})=\Theta_{(\psi(b),\psi(c))\varphi(K_{i,j})}(a_{\alpha(i)}).$$
Hence
$\psi(\Theta_{(b,c)}(a_i))=\Theta_{(\psi(b),\psi(c))\varphi(K_{i,j})}(a_{\alpha(i)})$.
From that, we obtain
\begin{eqnarray*}
\psi(b\star a_i)&=&\psi(b\lambda_{b^{-1}}(a_i))=\psi(b)\psi(\lambda_{b^{-1}}(a_i))\\
&=&\psi(b)\psi(\Theta_{(1,b^{-1})}(a_i))\\
&=&\psi(b)\Theta_{(1,\psi(b^{-1}))\varphi(K_{i,j})}(a_i)\\
&=&\psi(b)\lambda_{\psi(b)^{-1}}\Theta_{\varphi(K_{i,j})}(a_{\alpha(i)})\\
&=&\psi(b)\star\Theta_{\varphi(K_{i,j})}(a_{\alpha(i)})\\
&=&\psi(b)\star\psi(a_i).
\end{eqnarray*}
Now it is easy to see that $\psi$ is a morphism of braces.
Since $F$ is bijective and $$F((b,c)(b',c')K_{i,j})=(\psi(b),\psi(c))F((b',c')K_{i,j}),$$ it
follows that $\psi$ is bijective. Furthermore $(b,c)\in K_{i,j}$ if and
only if
\begin{eqnarray*}
\varphi(K_{i,j})L_{\alpha(i),\beta_i(j)}&=&F(K_{i,j})=F((b,c)K_{i,j})=(\psi(b),\psi(c))F(K_{i,j})\\
&=&(\psi(b),\psi(c))\varphi(K_{i,j})L_{\alpha(i),\beta_i(j)}.
\end{eqnarray*}
Therefore, defining $(y_{i,j},z_{i,j}):=\varphi(K_{i,j})$, $y_{i,j},z_{i,j}\in B$, the result follows.
\end{proof}

Summarizing, the last theorem says that two solutions constructed as
in Theorem~\ref{constSol} are isomorphic if we can find an automorphism
of the skew left brace $B$ that brings each $K_{i,j}$ to one $L_{i',j'}$,
taking into account that maybe the $L_{i',j'}$'s are permuted (that
is the reason for the $\alpha$ and $\beta_i$ maps), and that maybe
we have chosen another element of the orbit in the process (that is
the reason why the image $a_i$ is $\Theta_{(y_{i,j},z_{i,j})}(a_{\alpha(i)})$
and not just $a_{\alpha(i)}$, and it is also the reason why the
$L_{\alpha(i),\beta_i(j)}$ is conjugated by $(y_{i,j},z_{i,j})$).

As a corollary of the construction of solutions, we get the announced result characterizing 
skew left braces as permutation groups of solutions. 

\begin{corollary}
For any skew left brace $B$, there exist a non-degenerate solution $(X,r)$ such that 
$\mathcal{G}(X,r)\cong B$. Moreover, if $B$ is finite, we can choose $X$ to be finite. 
\end{corollary}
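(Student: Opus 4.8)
The plan is to apply Theorem~\ref{constSol} with the most economical possible choice of ingredients, using the trivial subgroup to bypass the most delicate hypothesis. Given an arbitrary skew left brace $B$, form the group $G=(B,\star)\rtimes(B,\cdot)$ and consider the action $\Theta$ of Lemma~\ref{propieAction}, with its decomposition of $B$ into orbits $B=\bigcup_{i\in\mathcal{O}}B_i$ and chosen representatives $a_i$. The first step is to select the index set $I\subseteq\mathcal{O}$: I would simply take $I=\mathcal{O}$, so that $Y=\bigcup_{i\in\mathcal{O}}B_i=B$ and the generation condition $B=\langle Y\rangle_\star$ holds trivially. For each $i\in I$ I would take $J_i$ to be a singleton and set the single subgroup $K_{i,j}$ equal to the trivial subgroup $\{1\}\leq\St(a_i)$.

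With this choice the otherwise-delicate condition
$$\left\{a\in B:(1,a),(a,a)\in\bigcap_{i\in I}\bigcap_{j\in J_i}\core_G(K_{i,j})\right\}=\{1\}$$
is automatic, since the remark immediately following its statement in the excerpt records that a trivial $K_{i,j}$ forces this set to be $\{1\}$ (indeed $\core_G(\{1\})=\{1\}$, so the intersection is already trivial). Hence all the hypotheses of Theorem~\ref{constSol} are met, and the theorem produces a set $X=\bigsqcup_{i\in I}G/\{1\}$ together with a map $r$ such that $(X,r)$ is a non-degenerate solution with $\mathcal{G}(X,r)\cong B$ as braces. This establishes the first assertion in full generality.

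For the finiteness refinement, suppose $B$ is finite. Then $G=(B,\star)\rtimes(B,\cdot)$ is finite, so every coset space $G/K_{i,j}$ is finite, and the orbit set $\mathcal{O}$ is finite; with $I=\mathcal{O}$ and each $J_i$ a singleton, the set $X=\bigsqcup_{i\in\mathcal{O}}G/\{1\}$ is a finite disjoint union of finite sets and is therefore finite. Thus the solution constructed above already has $X$ finite, which gives the moreover clause. The only point requiring a little care is that taking $K_{i,j}=\{1\}$ is legitimate precisely because the trivial subgroup is a subgroup of $\St(a_i)$, which holds for every $i$; no further structure of $\St(a_i)$ is needed.

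The main obstacle is essentially conceptual rather than computational: one must check that the hypotheses of Theorem~\ref{constSol} really are satisfied for this naive choice, in particular that taking $I=\mathcal{O}$ genuinely yields $\langle Y\rangle_\star=B$ (immediate, as $Y=B$) and that the trivial-subgroup shortcut for the core condition applies. Once these are in place the conclusion is a direct instance of Theorem~\ref{constSol}, so there is no hard computation to perform; the work is entirely in verifying that a legitimate set of ingredients exists for every $B$, which the trivial choice guarantees.
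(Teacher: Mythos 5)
Your proposal is correct and follows essentially the same route as the paper's own proof: take $I=\mathcal{O}$ (so $Y=B$), singleton $J_i$, and the trivial subgroup for each $K_{i,j}$, then invoke Theorem~\ref{constSol}, with finiteness of $X$ immediate when $B$ is finite. No issues.
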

\begin{proof}
Take $I=\mathcal{O}$, $Y=B$, $J_i=\{1\}$ for any $i\in I$, and $K_{i,1}=0$ for any $i\in I$. 
This satisfies the conditions of Theorem~\ref{constSol}, so we obtain a non-degenerate 
solution $(X,r)$ such that $\mathcal{G}(X,r)\cong B$.
\end{proof}

\begin{remark}
In this paper, we have focused in the study of isomorphisms of solutions from the point of view of 
their permutation groups. 
Recently, D. Yang in \cite{DYang2} has presented some results relating isomorphic injective solutions and 
their structure groups. Recall that a solution $(X,r)$ is called {\em injective} if the natural map $i: X\to G(X,r)$ is 
injective. In terms 
of skew left braces, her main theorem can be stated in the following way:
\begin{theorem}[{\cite[Theorem 3.3]{DYang2}}]
Let $(X,r)$ and $(Y,s)$ be two injective non-degenerate solutions of the Yang-Baxter equation. 
Then $(X,r)$ is isomorphic to $(Y,s)$ if and only if there exists a brace isomorphism 
$\phi: G(X,r)\to G(Y,s)$ such that $\phi(X)=Y$. 
\end{theorem}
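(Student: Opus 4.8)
The plan is to recover the solution maps $f$ and $g$ directly from the brace structure of the structure group together with its distinguished generating set, and then to observe that a brace isomorphism carrying one generating set onto the other automatically transports one solution to the other. Throughout I identify $X$ with $i(X)\subseteq G(X,r)$ and $Y$ with $i'(Y)\subseteq G(Y,s)$, where $i,i'$ denote the natural maps; this is legitimate precisely because both solutions are injective, so that the condition $\phi(X)=Y$ is meaningful and $\phi|_X\colon X\to Y$ is a genuine bijection.

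For the forward implication, suppose $F\colon(X,r)\to(Y,s)$ is an isomorphism of solutions, so that $F(f_x(y))=f'_{F(x)}(F(y))$ and $F(g_y(x))=g'_{F(y)}(F(x))$ for all $x,y\in X$. First I would check that $x\mapsto i'(F(x))$ respects the defining relations of $G(X,r)$: in $G(Y,s)$ one has $i'(u)\cdot i'(v)=i'(f'_u(v))\cdot i'(g'_v(u))$ for $u,v\in Y$, and substituting $u=F(x)$, $v=F(y)$ and using that $F$ intertwines $f,g$ with $f',g'$ yields exactly $i'(F(x))\cdot i'(F(y))=i'(F(f_x(y)))\cdot i'(F(g_y(x)))$. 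Hence $F$ extends to a group homomorphism $\phi\colon G(X,r)\to G(Y,s)$ with $\phi(i(x))=i'(F(x))$; applying the same argument to $F^{-1}$ produces a two-sided inverse, so $\phi$ is a $\cdot$-isomorphism carrying generators to generators, i.e. $\phi(X)=Y$. The identical computation applied to the defining relations of the derived groups $A(X,r)$, $A(Y,s)$ shows that $\phi$ also intertwines the two $\star$-operations; hence $\phi$ is an isomorphism of skew left braces with $\phi(X)=Y$.

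The substantial direction is the converse, and its key is that for an injective solution both $f$ and $g$ are visible inside the brace $G(X,r)$. Indeed, after the identification $i=i_G=i_A$, Lemma~\ref{actionsfg} gives $\lambda_x(y)=f_x(y)$ for $x,y\in X$, and Lemma~\ref{lemmaxb} gives $x\cdot y=\lambda_x(y)\cdot g_y(x)$, whence
$$f_x(y)=\lambda_x(y)\qquad\text{and}\qquad g_y(x)=\lambda_x(y)^{-1}\cdot x\cdot y$$
inside $G(X,r)$, with identical formulas in $G(Y,s)$. Now given a brace isomorphism $\phi$ with $\phi(X)=Y$, set $F:=\phi|_X\colon X\to Y$, a bijection. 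Since every brace morphism preserves both the $\lambda$-action and the multiplication $\cdot$, and since $f_x(y),g_y(x)\in X$ so that $\phi$ agrees with $F$ on these elements, I would compute
$$F(f_x(y))=\phi(\lambda_x(y))=\lambda_{F(x)}(F(y))=f'_{F(x)}(F(y))$$
and
$$F(g_y(x))=\phi(\lambda_x(y)^{-1}\cdot x\cdot y)=\lambda_{F(x)}(F(y))^{-1}\cdot F(x)\cdot F(y)=g'_{F(y)}(F(x)).$$
These are exactly the two conditions for $F$ to be a morphism of solutions, and being bijective it is an isomorphism.

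The main obstacle is the hypothesis of injectivity, which is used twice and is indispensable: it makes $X$ embed in $G(X,r)$ so that $\phi(X)=Y$ even makes sense and $\phi|_X$ is a bijection, and it guarantees that the brace-theoretic formulas above genuinely recover the set maps $f,g$ rather than their images in a quotient. The one subtlety to handle carefully is that $f_x(y)$ and $g_y(x)$ must be recognised as lying in the distinguished subset $X$ (not merely in $G(X,r)$) in order to replace $\phi$ by its restriction $F$ in the two displayed computations; this again rests on injectivity together with the fact that $f_x$ and $g_y$ map $X$ into $X$.
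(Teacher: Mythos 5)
The paper does not actually prove this statement: it is quoted verbatim from Yang's work as \cite[Theorem 3.3]{DYang2} inside a remark, so there is no internal proof to compare yours against. Judged on its own, your argument is correct and is the natural brace-theoretic proof. The converse direction is the substantive one and you handle it well: Lemma~\ref{actionsfg} gives $i(f_x(y))=\lambda_{i(x)}(i(y))$ and Lemma~\ref{lemmaxb} gives $i(g_y(x))=\lambda_{i(x)}(i(y))^{-1}\cdot i(x)\cdot i(y)$, so both components of $r$ are expressions in the two brace operations evaluated on $i(X)$; any brace isomorphism with $\phi(i(X))=i'(Y)$ therefore transports $r$ to $s$, and injectivity is exactly what lets you read these identities back as statements about elements of $X$ rather than of a quotient. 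You correctly flag both places where injectivity is indispensable.

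The only step that deserves more care is in the forward direction, where you assert that ``the identical computation applied to the defining relations of the derived groups shows that $\phi$ also intertwines the two $\star$-operations.'' What that computation produces is a second map, a $\star$-homomorphism $\psi\colon A(X,r)\to A(Y,s)$ agreeing with $\phi$ on generators; it is not automatic that a map which is multiplicative for $\cdot$ on $\cdot$-words and a map which is multiplicative for $\star$ on $\star$-words coincide as set maps on all of $G(X,r)$. One does need a short induction (on word length in the generators, using that $\lambda_{i(x)}$ acts on $\star$-words letterwise via $f_x$ and that $F$ intertwines $f$ with $f'$) to show $\psi(a\cdot b)=\psi(a)\cdot\psi(b)$, whence $\psi=\phi$ and $\phi$ preserves $\star$. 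This is routine and does not threaten the proof, but as written it is a genuine gap of one paragraph rather than an ``identical computation.''
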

This theorem means that essentially the structure group with its brace structure is unique for each 
injective solution.

\end{remark}

In the next sections, we show how to use Theorems~\ref{constSol} and \ref{isomSol} to generate solutions of 
the Yang-Baxter equation with additional properties, like square-free, involutive or irretractable solutions. 
Note that there are two easy ways to find some subgroups of $(B,\star)\rtimes (B,\cdot)$: take subgroups of the form 
$(B,\star)\rtimes K$, where $K$ is a subgroup of $(B,\cdot)$, and those of the form $H\rtimes (B,\cdot)$, where 
$H$ is a subgroup of $(B,\star)$ invariant by the action of $(B,\cdot)$. The first kind of subgroups 
will appear in the involutive case, and the second one will appear in Section~\ref{sectRacks}.

\subsection{Square-free solutions}\label{SquareFree}

Recall that a non-degenerate solution $(X,r)$ is called {\em square-free} if $r(x,x)=(x,x)$. To construct this kind 
of solutions, we need a very special type of elements inside our brace. 

\begin{definition}
Let $B$ be a skew left brace. We say that $a\in B$ is {\em square-free} if $\lambda_a(a)=a$. 
\end{definition}

Note that, if $a$ is square-free, $a=\lambda_a(a)=\Theta_{(1,a)}(a)$, so $(1,a)$ belongs to $\St(a)$ in the notation of the last
section. Moreover, $\Theta_{(a,a)}(a)=a\star \lambda_a(a)\star a^\star=a$, hence also $(a,a)$ belongs to $\St(a)$.

Returning to $(X,r)$, for $r$ to be square-free we need first that all $x=(b,c)K_{i,j}$ satisfy
\begin{align*}
(b,c)K_{i,j}=f_x(x)=(1,\Theta_{(b,c)}(a_i))(b,c)K_{i,j};
\end{align*}
equivalently, $(1,\Theta_{(b,c)}(a_i))\in (b,c)K_{i,j}(b,c)^{-1}$. 
In particular, since 
$$(b,c)K_{i,j}(b,c)^{-1}\leq (b,c)\St(a_i)(b,c)^{-1}=\St(\Theta_{(b,c)}(a_i)),$$
all the elements $\Theta_{(b,c)}(a_i)$ of the orbit of $a_i$ must be square-free. 

Besides, we need that $g_x(x)=x$ for any $x$. Using the condition $(1,\Theta_{(b,c)}(a_i))\in\St(\Theta_{(b,c)}(a_i))$
obtained right above, we can simplify the expression for $g$:
\begin{align*}
(b,c)K_{i,j}=g_x(x)=(\Theta_{(b,c)}(a_i),\Theta_{(b,c)}(a_i))^{-1}(b,c)K_{i,j};
\end{align*}
equivalently, $(\Theta_{(b,c)}(a_i),\Theta_{(b,c)}(a_i))\in (b,c)K_{i,j}(b,c)^{-1}$. 

Therefore, to construct a square-free non-degenerate solution, we have the following proposition.
\begin{proposition}\label{constSquareFree}
Assume all the hypothesis of Theorem~\ref{constSol} plus the two conditions
\begin{align*}
&(1,\Theta_{(b,c)}(a_i))\in (b,c)K_{i,j}(b,c)^{-1},\text{ and }\\
&(\Theta_{(b,c)}(a_i),\Theta_{(b,c)}(a_i))\in (b,c)K_{i,j}(b,c)^{-1} \text{, for all }b,c\in B,~i\in I,~j\in J_i.
\end{align*}
Then, the constructed solution is square-free. 

Moreover, any square-free non-degenerate solution is constructed in this way. 
\end{proposition}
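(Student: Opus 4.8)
The plan is to prove the two directions of Proposition~\ref{constSquareFree} by carefully tracking the square-free condition $r(x,x)=(x,x)$ through the explicit formulas for $f$ and $g$ established in Theorem~\ref{constSol}. Since the discussion preceding the proposition already carries out most of the computation, the proof is largely a matter of assembling these observations and verifying that the stated conditions are both necessary and sufficient.

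For the \emph{if} direction, I would start from the two hypotheses and an arbitrary point $x=(b,c)K_{i,j}\in X$. The first hypothesis $(1,\Theta_{(b,c)}(a_i))\in (b,c)K_{i,j}(b,c)^{-1}$ is exactly the condition needed to conclude $f_x(x)=(1,\Theta_{(b,c)}(a_i))(b,c)K_{i,j}=(b,c)K_{i,j}=x$, since an element of $(b,c)K_{i,j}(b,c)^{-1}$ fixes the coset $(b,c)K_{i,j}$ under left multiplication. Using that $f_x(x)=x$, the formula for $g$ simplifies (as in the paragraph preceding the statement) so that $g_x(x)=(\Theta_{(b,c)}(a_i),\Theta_{(b,c)}(a_i))^{-1}(b,c)K_{i,j}$; the second hypothesis then gives $g_x(x)=x$ by the same coset argument. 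Hence $r(x,x)=(f_x(x),g_x(x))=(x,x)$ for every $x$, so the solution is square-free.

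For the \emph{moreover} (\emph{only if}) direction, I would invoke Theorem~\ref{constSol}: any square-free non-degenerate solution $(Z,t)$ with $\mathcal{G}(Z,t)\cong B$ is isomorphic to one of the form $(X,r)$ produced by the construction, for some choice of data $I$, $a_i$, $J_i$, $K_{i,j}$. Since being square-free is preserved under isomorphism of solutions, it suffices to show that if the constructed $(X,r)$ is square-free then its defining data satisfy the two conditions. This is just the reverse reading of the computation above: $f_x(x)=x$ forces $(1,\Theta_{(b,c)}(a_i))$ to fix the coset $(b,c)K_{i,j}$, i.e.\ to lie in $(b,c)K_{i,j}(b,c)^{-1}$, giving the first condition; and then, with $f_x(x)=x$ in hand, $g_x(x)=x$ forces $(\Theta_{(b,c)}(a_i),\Theta_{(b,c)}(a_i))\in(b,c)K_{i,j}(b,c)^{-1}$, giving the second. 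Running the chain of equivalences over all $b,c\in B$, $i\in I$, $j\in J_i$ yields exactly the two stated conditions.

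I do not expect a serious obstacle here, since the heavy lifting (the explicit forms of $f$ and $g$, and the simplification of $g_x(x)$ under $f_x(x)=x$) is done in the preamble and in Theorem~\ref{constSol}. The one point requiring mild care is the \emph{only if} direction: one must be sure that transporting the square-free property across the isomorphism of Theorem~\ref{constSol} genuinely reduces the problem to the constructed solution, rather than to $(Z,t)$ itself. This is immediate once one checks that an isomorphism $F$ of solutions sends $t(z,z)=(z,z)$ to $r(F(z),F(z))=(F(z),F(z))$, which follows directly from the definition of a morphism of solutions applied with $x_1=x_2$.
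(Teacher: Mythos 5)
Your proposal is correct and follows essentially the same route as the paper, whose own argument is the computation carried out in the paragraphs immediately preceding the statement: the condition $f_x(x)=x$ is equivalent to $(1,\Theta_{(b,c)}(a_i))\in(b,c)K_{i,j}(b,c)^{-1}$, and once this holds (so that $(1,\Theta_{(b,c)}(a_i))$ stabilizes $\Theta_{(b,c)}(a_i)$) the expression for $g_x(x)$ collapses and $g_x(x)=x$ becomes equivalent to the second condition. Your handling of the \emph{moreover} direction via Theorem~\ref{constSol} and the invariance of square-freeness under isomorphism of solutions is exactly what the paper intends.
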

Note that, in particular, if a skew left brace does not contain a union of orbits of square-free elements 
generating all the brace, it can not produce square-free solutions.

\section{Involutive non-degenerate solutions and left braces}\label{sectInv}

\subsection{Connections with left braces}
In this section, we use our new results to study involutive solutions, which is the type 
of non-degenerate solutions that has received more attention recently. 
Recall that $r:X\times X\to X\times X$ is involutive if $r\circ r=\id$. With our usual notation for the components 
of a solution, $r$ is involutive if and only if $f_{f_x(y)}g_y(x)=x$ and $g_{g_y(x)}f_x(y)=y$, for any $x,y\in X$. 
Hence, if $r$ is non-degenerate, $f$ determines $g$ because $g_y(x)=f^{-1}_{f_x(y)}(x)$, and then an involutive 
non-degenerate solution is always of the form $r(x,y)=\left(f_x(y),~f^{-1}_{f_x(y)}(x)\right)$. 
Moreover, the presentation of the derived group becomes
\begin{align*}
A(X,r)&=\left\langle X\mid x\star f_x(y)=f_x(y)\star f_{f_x(y)}g_y(x)\right\rangle=
\left\langle X\mid x\star f_x(y)=f_x(y)\star x\right\rangle\\
&=\left\langle X\mid x\star y=y\star x\right\rangle,
\end{align*}
so it is isomorphic to the free abelian group generated by $X$. Then, 
the natural map $i_A: X\to A(X,r)=\Z^{(X)}$ is injective, and this implies that the map 
$i_G:X\to G(X,r)$ is also injective. In other words, two elements of $X$ do not become 
equal in $G(X,r)$. These facts will simplify 
some parts of the theory of skew left braces that we have developed in the last section.
For example, studying involutive non-degenerate solutions, we shall focus on a concrete class of skew 
left braces. 

\begin{definition}
Let $B$ be a skew left brace. We say that $B$ is a left brace if $(B,\star)$ is abelian. 
\end{definition}

In this case, we prefer to write the operation $\star$ additively $\star=+$, and to denote the 
common (multiplicative and additive) identity element by 0.
In this way, we recover the definition of left brace, introduced by Rump in \cite{Rump1}. 
We will use the equivalent definition of left brace in \cite{CJO}.

\begin{definition}
A {\em left brace} is a set $B$ with two binary operations, a sum $+$ and a multiplication $\cdot$, such that
$(B,+)$ is an abelian group, $(B,\cdot)$ is a group, and any $a,b,c\in B$ satisfies
$$
a\cdot (b+c)+a=a\cdot b+a\cdot c.
$$
A {\em right brace} is defined analogously, changing 
the last property by $(b+c)\cdot a+a=b\cdot a+c\cdot a$. A left brace also satisfying the condition of a 
right brace is called a {\em two-sided brace}. 
\end{definition}

Now the following classical facts about the relation between left braces and non-degenerate involutive solutions 
are a corollary of the relations between skew left braces and non-degenerate solutions. 

\begin{proposition}[{\cite[Theorem~1]{CJO}}]
Let $(X,r)$ be an involutive non-degenerate solution. Then, $G(X,r)$ and $\mathcal{G}(X,r)$ are 
left braces. Moreover, $(G(X,r),+)\cong \Z^{(X)}$, and 
$\mathcal{G}(X,r)\cong G(X,r)/\soc(G(X,r))\cong \left\langle f_x:x\in X\right\rangle\leq\sym_X.$
The sum in $\mathcal{G}(X,r)$ satisfies $f_x+f_y=f_x\circ f_{f^{-1}_x(y)}$ for all $x,y\in X$.
\end{proposition}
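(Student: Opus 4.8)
The plan is to read off every assertion from the general skew-left-brace machinery of Theorems~\ref{braceStruct} and \ref{bracePerm}, specialised to the involutive case. The discussion preceding the statement has already established the two facts I will lean on: the natural map $i\colon X\to G(X,r)$ is injective, and $A(X,r)=(G(X,r),\star)$ is the free abelian group $\Z^{(X)}$ with basis $i(X)$. Since a left brace is exactly a skew left brace with abelian star group, $G(X,r)$ is a left brace with $(G(X,r),+)=(G(X,r),\star)\cong\Z^{(X)}$; and $\mathcal{G}(X,r)\cong G(X,r)/I$, being a quotient, has abelian star group as well, so it is a left brace too.

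The crux is to show that the ideal $I=\{b\in G(X,r): f_b=\stackrel{\sim}{g}_b=\id\}$ coincides with $\soc(G(X,r))$. The inclusion $I\subseteq\soc(G(X,r))$ is already contained in the proof of Theorem~\ref{bracePerm}. For the reverse inclusion I first simplify the socle: because $(G(X,r),\star)$ is abelian the condition $a\star b\star a^\star=b$ is automatic, so $\soc(G(X,r))=\{b:\lambda_b=\id\}$, and since $A(X,r)$ is free abelian on $i(X)$ with $i$ injective and $\lambda_b(i(y))=i(f_b(y))$ (Lemma~\ref{actionsfg}), this equals $\{b:f_b=\id\}$. It therefore remains to prove that $f_b=\id$ forces $\stackrel{\sim}{g}_b=\id$. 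I would obtain this from Lemma~\ref{lemmaxb}: taking the identity $i(x)\cdot c=\lambda_x(c)\cdot i(g_c(x))$ with $c=\lambda_x^{-1}(b)$ gives $i(\stackrel{\sim}{g}_b(x))=i(g_{\lambda_x^{-1}(b)}(x))=b^{-1}\cdot\big(i(x)\star b\big)$, where I have used $i(x)\cdot\lambda_x^{-1}(b)=i(x)\star b$. Now if $b\in\soc(G(X,r))$ then $b^{-1}=b^\star$ (the socle is a trivial brace) and $\lambda_{b^{-1}}=\id$, so $b^{-1}\cdot\big(i(x)\star b\big)=b^\star\star(i(x)\star b)=i(x)$ by abelianness of $\star$; injectivity of $i$ then yields $\stackrel{\sim}{g}_b(x)=x$. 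Transforming $\stackrel{\sim}{g}_b$ into an expression that collapses via the triviality of the socle is the step I expect to be the main obstacle.

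With $I=\soc(G(X,r))$ established, Theorem~\ref{bracePerm} gives $\mathcal{G}(X,r)\cong G(X,r)/\soc(G(X,r))$. The morphism $f\colon G(X,r)\to\sym_X$ of Lemma~\ref{actionsfg} has image $\langle f_x:x\in X\rangle$ and kernel $\{b:f_b=\id\}=\soc(G(X,r))$, so $\langle f_x:x\in X\rangle\cong G(X,r)/\soc(G(X,r))\cong\mathcal{G}(X,r)$; concretely this is the first-coordinate projection $(f_a,\stackrel{\sim}{g}_a^{-1})\mapsto f_a$, which is injective precisely because $f_a=\id$ puts $a$ in $\soc(G(X,r))=I$ and hence forces $\stackrel{\sim}{g}_a=\id$. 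Finally, the additive formula is read off the explicit star operation on $\mathcal{G}(X,r)$ recorded just after Theorem~\ref{bracePerm}: evaluating $(f_a,\stackrel{\sim}{g}_a^{-1})\star(f_b,\stackrel{\sim}{g}_b^{-1})$ at the generators $a=i(x)$, $b=i(y)$ and keeping the first coordinate returns $f_x\circ f_{f_x^{-1}(y)}$, which, writing $\star=+$, is exactly $f_x+f_y=f_x\circ f_{f_x^{-1}(y)}$.
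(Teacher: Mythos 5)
Your proof is correct and follows the paper's overall skeleton (abelianness of $A(X,r)$ gives the left brace structure, then identify $I$ with $\soc(G(X,r))$, then project onto the first coordinate), but you handle the one nontrivial step --- the inclusion $\soc(G(X,r))\subseteq I$ --- by a genuinely different computation. The paper gets it for free from involutivity: since $g_y(x)=f^{-1}_{f_x(y)}(x)$, one has $\stackrel{\sim}{g}_b(x)=g_{\lambda^{-1}_x(b)}(x)=f^{-1}_b(x)$ for \emph{every} $b\in G(X,r)$, so the condition $\stackrel{\sim}{g}_b=\id$ in the definition of $I$ is simply redundant once $f_b=\id$. You instead derive $i(\stackrel{\sim}{g}_b(x))=b^{-1}\cdot\bigl(i(x)\star b\bigr)$ from Lemma~\ref{lemmaxb} and collapse it using that the socle is a trivial brace (so $b^{-1}=b^\star$ and $\lambda_{b^{-1}}=\id$) together with commutativity of $\star$; this is valid --- the step you flagged as the main obstacle does go through exactly as you describe --- but it is more roundabout than necessary in the involutive setting. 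The trade-off: your argument uses only injectivity of $i$ and the socle axioms, so it shows $\soc(G(X,r))\subseteq I$ for any non-degenerate solution with $i$ injective, whereas the paper's identity $\stackrel{\sim}{g}_b=f^{-1}_b$ is special to the involutive case but yields the stronger fact that the second coordinate of $\mathcal{G}(X,r)$ is everywhere determined by the first --- which is also how the paper justifies injectivity of the projection $(f_a,\stackrel{\sim}{g}_a^{-1})\mapsto f_a$, a point you recover instead from $\ker f=\soc(G(X,r))=I$. Both routes are sound and the remaining assertions (the isomorphism with $\langle f_x:x\in X\rangle$ and the additive formula read off the displayed star operation on $\mathcal{G}(X,r)$) are handled as in the paper.
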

\begin{proof}
We know that $G(X,r)$ and $\mathcal{G}(X,r)$ are skew left braces, and
we have already observed that $(G(X,r),\star)=A(X,r)\cong \Z^{(X)}$, so $G(X,r)$ is a left brace. 
We know that $(\mathcal{G}(X,r),\star)$ is a quotient of $(G(X,r),\star)$, so this group is also abelian.

By the proof of Theorem~\ref{bracePerm}, $\mathcal{G}(X,r)$ is isomorphic to $G(X,r)/I$, where $I$ is the ideal 
$I=\{b\in G(X,r):f_b=\id,\stackrel{\sim}{g}_b=\id\}$, which is contained in the socle of 
$G(X,r)$. But in the involutive case $\soc(G(X,r))=\{b\in G(X,r):\lambda_b=\id\}=\{b\in G(X,r):f_b=\id\}$ 
(where in the last step it is used that $i:X\to G(X,r)$ is injective in the involutive case), and 
$\stackrel{\sim}{g}_b(x)=g_{\lambda^{-1}_x(b)}(x)=f^{-1}_b(x)$, so 
$\soc(G(X,r))=I$.  

Now consider the surjective morphism of groups 
$$\mathcal{G}(X,r)=
\left\langle \left(f_x,~\stackrel{\sim}{g}^{-1}_x\right):x\in X\right\rangle\to \left\langle f_x : x\in X\right\rangle,$$ which is 
given over the generators by $(f_x,\stackrel{\sim}{g}_x^{-1})\mapsto f_x$. This morphism is injective because, in an 
involutive non-degenerate solution, $\stackrel{\sim}{g}_x(y)=g_{f^{-1}_y(x)}(y)=f^{-1}_{f_yf^{-1}_y(x)}(y)=f^{-1}_x(y)$, 
so $\stackrel{\sim}{g}_x^{-1}$ is uniquely determined by $f_x$. 
Thus $\mathcal{G}(X,r)\cong \left\langle f_x : x\in X\right\rangle$. 
\end{proof}

\subsection{Construction of involutive non-degenerate solutions}\label{sectConstInv}

The method of construction of all the non-degenerate solutions 
associated to a fixed skew left brace can be used in 
particular for involutive non-degenerate solutions associated to 
a fixed left brace. The fundamental difference 
between the two cases is that, in a left brace, 
the action $\Theta:(B,+)\rtimes (B,\cdot)\to \aut(B,+)$ is reduced to 
$$\Theta_{(a,b)}(c)=a+ \lambda_b(c)- a=\lambda_b(c)$$ for any $a,b,c\in B$. In other words, in the involutive case, 
we may simply consider the action $\lambda: (B,\cdot)\to\aut(B,+)$; this modifies a bit the previous 
theorems about construction of solutions. 

A remark is in order here to understand completely the relation between the method of this section 
and the ones in Section~\ref{sectConstSol}: a non-degenerate involutive solution always 
gives rise to a structure of left brace over its permutation group. Hence if we are looking for 
involutive solutions, we must always work with left braces, and in fact in this section we are going to 
present a method to recover all the non-degenerate involutive solutions from left braces. But left braces 
also recover some non-involutive solutions with the method from Section~\ref{sectConstSol}, as the following example shows. 

\begin{example}\label{exLeftBraceNotInj}
Let $B$ be the brace with $(B,\star)=(\Z/(2),+)$, and trivial brace structure; i.e. $b\cdot c:=b+c$ for any $b,c\in \Z/(2)$. 
With the notation of Theorem~\ref{constSol}, in this case $G=(B,\star)\rtimes (B,\cdot)=\Z/(2)\times\Z/(2)$, and $\Theta$ is the trivial action: 
$\Theta_{(b,c)}(d)=d$ for any $b,c,d\in \Z/(2)$. Then, the decomposition of $B$ in $G$-orbits is $B=\{0\}\cup\{1\}$, and 
we take $Y=\{1\}\subseteq B$, $K_{1,1}=\{(0,0)\}\leq St(1)=G$. 
We enumerate the elements of the set $X=G/K_{1,1}\cong\Z/(2)\times\Z/(2)$ by $x_1=(0,0)$, $x_2=(0,1)$, $x_3=(1,0)$ and $x_4=(1,1)$. 
It is not difficult to show that $f_{(b,c)}(b',c')=(0,1)+(b',c')$ and $g_{(b,c)}(b',c')=(1,1)+(b',c')$ for any 
$b,b',c,c'\in B$. Hence $f_x=f=(x_1,x_2)(x_3,x_4)$, $g_x=g=(x_1,x_4)(x_2,x_3)\in\sym_X$
for any $x\in X$. This solution is not involutive because $g\neq f^{-1}$, but $\mathcal{G}(X,r)\cong B$, which is a left brace.  
Observe also that the map $i:X\to G(X,r)$ is not injective in this case. 
\end{example}

In other words, applying Theorem~\ref{constSol}, skew left braces $B$ with $(B,\star)$ a non-abelian group only recover non-involutive 
non-degenerate solutions. Left braces recover both involutive and non-involutive solutions, and next we show how to recover only the 
involutive non-degenerate solutions.  Our Theorems \ref{constInvSol} and \ref{isomInvSol} were first proved in \cite{BCJ}, 
but here our aim is to show how they fit in a natural way in the general picture.

We need first some notation, analogous to the notation of Section~\ref{sectConstSol}. 
The stabiliser of $a\in B$ by the action $\lambda$ is
denoted $\St(a)$. For $a\in B$, let $B_a=\{\lambda_b(a)\mid b\in
B\}$ be the orbit of $a$, and let $\mathcal{O}=\{ B_a\mid a\in B\}$
be the set of orbits of the action $\lambda$.  For each $i\in
\mathcal{O}$, choose an element $a_i\in i$. Let $I$ be a subset of
$\mathcal{O}$, such that $Y=\bigcup_{i\in I}i$ satisfies $B=\langle
Y\rangle_+$, the additive subgroup generated by $Y$. For each $i\in
I$, let $J_i$ be a non-empty set and  let $\{ K_{i,j} \}_{j\in J_i}$ be
a family of subgroups of $\St(a_i)$ such that
$$\bigcap_{i\in I}\bigcap_{j\in J_i}\bigcap_{b\in B}bK_{i,j}b^{-1}=\bigcap_{i\in I}\bigcap_{j\in J_i}\core_{(B,\cdot)}(K_{i,j})=\{ 1\}.$$
Note that if one of the subgroups $K_{i,j}$ is trivial, then this
last condition is satisfied.

The next result is the construction of all the involutive non-degenerate solutions.
It is proved as a consequence of the construction of non-degenerate solutions of Theorem~\ref{constSol}. 
An alternative proof can be found in \cite[Theorem~3.1]{BCJ}.

\begin{theorem}\label{constInvSol}
 With the above notation, let  $X:=\bigsqcup_{i\in I}\bigsqcup_{j\in
J_i}B/K_{i,j}$ be the disjoint union of the sets of left cosets
$B/K_{i,j}$. Then, $(X,r)$, where $r$ is the map
$$
\begin{array}{cccc}
r\colon& X\times X &\longrightarrow & X\times X\\
& (b_1K_{i_1,j_1},~b_2K_{i_2,j_2}) &\mapsto
&\left(f_{b_1K_{i_1,j_1}}(b_2K_{i_2,j_2}),~f^{-1}_{f_{b_1K_{i_1,j_1}}(b_2K_{i_2,j_2})}(b_1K_{i_1,j_1})\right),
\end{array}
$$
 with
$f_{b_1K_{i_1,j_1}}(b_2K_{i_2,j_2})=\lambda_{b_1}(a_{i_1})b_2K_{i_2,j_2}$,
is an involutive non-degenerate solution such that $\mathcal{G}(X,r)\cong B$ as left
braces.

Moreover, any non-degenerate involutive solution $(Z,t)$, with $\mathcal{G}(Z,t)\cong B$ as
left braces, is isomorphic to  such a solution.
\end{theorem}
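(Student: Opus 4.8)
The plan is to derive Theorem~\ref{constInvSol} directly from the general construction in Theorem~\ref{constSol} by specializing it to the involutive case. The starting observation is that an involutive non-degenerate solution always has $(B,\star)$ abelian, so $B$ is a left brace and the action $\Theta$ collapses: as noted right before the theorem, $\Theta_{(a,b)}(c)=a+\lambda_b(c)-a=\lambda_b(c)$, which depends only on $b$. The key consequence is that the $G$-action factors through the projection $(B,+)\rtimes(B,\cdot)\to(B,\cdot)$, and the $\Theta$-orbit $B_a=\{\Theta_{(b,c)}(a)\}$ reduces to $\{\lambda_c(a)\}$, matching the orbit notation set up just before the statement. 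First I would record these reductions carefully so that the data $(\mathcal{O},I,a_i,J_i,K_{i,j})$ of Theorem~\ref{constInvSol} is seen to be a legitimate instance of the data of Theorem~\ref{constSol}, after the natural identification of a subgroup $K\leq\St(a_i)\leq(B,\cdot)$ with $\{1\}\rtimes K\leq (B,+)\rtimes(B,\cdot)$, or more precisely with a subgroup whose core computation recovers the stated triviality condition $\bigcap_{i,j}\core_{(B,\cdot)}(K_{i,j})=\{1\}$.

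Second, I would translate the formulas. In Theorem~\ref{constSol} the set is $X=\bigsqcup_{i,j}G/K_{i,j}$ with $G=(B,+)\rtimes(B,\cdot)$, whereas here $X=\bigsqcup_{i,j}B/K_{i,j}$ with $K_{i,j}\leq(B,\cdot)$. The identification is that a coset $bK_{i,j}$ in $(B,\cdot)$ corresponds to the coset $(0,b)(\{1\}\rtimes K_{i,j})$, and I would check that the left $G$-action $\sigma$ descends to the left-multiplication action of $(B,\cdot)$ on $B/K_{i,j}$ under this correspondence, using $\Theta_{(a,b)}=\lambda_b$. Plugging $\Theta_{(b_1,c_1)}(a_{i_1})=\lambda_{c_1}(a_{i_1})$ into the general formula for $f_x(y)$ gives
\begin{align*}
f_{b_1K_{i_1,j_1}}(b_2K_{i_2,j_2})=\lambda_{b_1}(a_{i_1})b_2K_{i_2,j_2},
\end{align*}
which is exactly the stated $f$ once one drops the redundant additive coordinate and renames $c_1$ to $b_1$. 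For $g$, rather than simplifying the unwieldy general expression directly, the cleaner route is to invoke involutivity: I would show that the constructed solution satisfies $r^2=\id$, whence the formula $g_y(x)=f^{-1}_{f_x(y)}(x)$ of the involutive case holds automatically and matches the second component in the statement.

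Third, the involutivity itself is the substantive point to verify. The natural strategy is to use the socle characterization from the involutive setting together with the relation $\overline{r}$ involutive $\Leftrightarrow$ $A(X,r)$ abelian from Proposition on $A(X,r)$; but here the direct approach is to check that the derived group relations $x\star f_x(y)=f_x(y)\star x$ hold, equivalently that $(B,+)$ being abelian forces the extended solution to be involutive. Concretely, I would verify $f_{f_x(y)}g_y(x)=x$ and $g_{g_y(x)}f_x(y)=y$ by computing in the coset description, using that $\lambda\colon(B,\cdot)\to\aut(B,+)$ is a morphism and that $a_{i}\in B_{a_i}$ is a fixed orbit representative. The main obstacle I expect is precisely this involutivity computation: one must show that the $g$ produced by the general Theorem~\ref{constSol} coincides with $f^{-1}_{f_x(y)}(x)$, which amounts to unwinding the double-$\Theta$ expression for $g_y(x)$ and recognizing, via $\Theta_{(a,b)}=\lambda_b$ and the group morphism property of $\lambda$, that the additive twists cancel. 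Finally, the ``moreover'' clause follows from the corresponding clause in Theorem~\ref{constSol}: any involutive non-degenerate $(Z,t)$ has $\mathcal{G}(Z,t)$ a left brace, so it is isomorphic to one of the general solutions, and the involutivity forces the data to be of the special form described here; I would close by remarking that this is where the alternative self-contained argument of \cite[Theorem~3.1]{BCJ} could be substituted.
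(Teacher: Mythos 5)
Your overall strategy --- specialize Theorem~\ref{constSol} to a left brace using $\Theta_{(a,b)}=\lambda_b$ --- is the same as the paper's, but there is a genuine error at the heart of your translation, and it propagates into the one step you yourself flag as substantive. You identify a subgroup $K_{i,j}\leq\St(a_i)\leq(B,\cdot)$ with $\{1\}\rtimes K_{i,j}\leq(B,+)\rtimes(B,\cdot)$, and a coset $bK_{i,j}$ with $(0,b)(\{1\}\rtimes K_{i,j})$. This is not the right subgroup: the correct choice, and the one the paper proves is \emph{forced} by involutivity, is $\overline{K}_{i,j}=\{(a,b): a\in B,\ b\in K_{i,j}\}=(B,+)\rtimes K_{i,j}$, which is what makes $G/\overline{K}_{i,j}\cong B/K_{i,j}$ as $G$-sets. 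With $\{1\}\rtimes K_{i,j}$ the coset space has cardinality $\ord{B}\cdot[B:K_{i,j}]$ rather than $[B:K_{i,j}]$, so you do not even recover the set $X$ of the statement. Worse, the ``additive twists'' you hope will cancel in the general formula for $g$ do \emph{not} cancel for that subgroup: writing $d=\lambda_{\lambda_{c_1}(a_{i_1})}\lambda_{c_2}(a_{i_2})$, the general $g_y(x)$ is $(d,d)^{-1}(b_1,c_1)\overline{K}_{i_1,j_1}$ while $f^{-1}_{f_x(y)}(x)$ is $(1,d)^{-1}(b_1,c_1)\overline{K}_{i_1,j_1}$; these differ by a nontrivial element of $(B,+)\times\{1\}$, which is absorbed precisely because $\overline{K}_{i_1,j_1}$ contains all of $(B,+)\times\{1\}$. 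The paper's own Example~\ref{exLeftBraceNotInj} (the trivial brace on $\Z/(2)$ with the trivial subgroup of $G$) is exactly the counterexample: a left brace fed into Theorem~\ref{constSol} with a subgroup of your form produces a \emph{non-involutive} solution.

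The second gap is the converse. For the ``moreover'' clause you write that ``involutivity forces the data to be of the special form described here,'' but this is the nontrivial half of the argument and cannot be waved through. The paper proves it by contradiction: if some $\overline{K}_{i,j}$ fails to contain some $(z,1)$ with $z\in Y$, then the explicit element $x=(\lambda_{-z}^{-1}(z),\lambda_{-z}^{-1}(z))\overline{K}_{i,j}$ together with a suitably chosen $y$ satisfies $g_y(x)=\overline{K}_{i,j}$ while $f^{-1}_{f_x(y)}(x)=(z,1)\overline{K}_{i,j}\neq\overline{K}_{i,j}$, contradicting involutivity. Without this step you have not shown that every involutive solution arises from subgroups of the form $(B,+)\rtimes K_{i,j}$, which is exactly what reduces the general coset spaces $G/\overline{K}_{i,j}$ to the $B/K_{i,j}$ of the statement and reduces the core condition in $G$ to $\bigcap_{i\in I}\bigcap_{j\in J_i}\core_{(B,\cdot)}(K_{i,j})=\{1\}$.
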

\begin{proof}
Theorem~\ref{constSol} describes precisely the non-degenerate solutions $(X,r)$ such that $\mathcal{G}(X,r)$ is isomorphic 
to $B$ as left braces. 
Recall that, in that theorem, we consider the group $G=(B,+)\rtimes (B,\cdot)$, where the action of $(B,\cdot)$ 
over $(B,+)$ is $\lambda$, and we also consider the action $\Theta: G\to Aut(B,+)$. 
We have already observed that, when 
$B$ is a left brace, $\Theta_{(a,b)}=\lambda_b$. 
The stabiliser of $a\in B$ by the action $\Theta$ (in our case, the 
action $\lambda$) is
denoted $\St(a)$. For $a\in B$, let $B_a=\{\lambda_{c}(a)\mid c\in
B\}$ be the orbit of $a$, and let $\mathcal{O}=\{ B_a\mid a\in B\}$
be the set of orbits of the action $\lambda$.  For each $i\in
\mathcal{O}$, choose an element $a_i\in i$. Then, Theorem~\ref{constSol} tells us that 
any non-degenerate solution is constructed choosing a subset $I$ of
$\mathcal{O}$ such that $Y=\bigcup_{i\in I}i$ satisfies $B=\langle
Y\rangle_+$, and choosing for each $i\in
I$ a non-empty set $J_i$ and a family of subgroups $\{ \overline{K}_{i,j} \}_{j\in J_i}$ of $\St(a_i)$ such that
$$\left\{a\in B:(1,a),(a,a)\in\bigcap_{i\in I}\bigcap_{j\in J_i}\core_G(\overline{K}_{i,j})\right\}=\{ 1\}.$$

Now, to prove our theorem, we are going to show that a non-degenerate solution constructed with Theorem~\ref{constSol} is involutive 
if and only if any $\overline{K}_{i,j}$ is of the form 
$$\overline{K}_{i,j}=\{(a,b)\in G: a\in B, b\in K_{i,j}\}$$
for some subgroup $K_{i,j}$ of $(B,\cdot)$. If each $\overline{K}_{i,j}$ is of this form, then $G/\overline{K}_{i,j}\cong B/K_{i,j}$
as $G$-sets. Moreover, for any $x=(b_1,c_1)\overline{K}_{i_1,j_1}$ and $y=(b_2,c_2)\overline{K}_{i_2,j_2}$, the $f$ and $g$ defined in Theorem~\ref{constSol} are simplified as
$$
f_x(y)=(1,\Theta_{(b_1,c_1)}(a_{i_1}))(b_2,c_2)\overline{K}_{i_2,j_2}=(1,\lambda_{c_1}(a_{i_1})\cdot c_2)\overline{K}_{i_2,j_2},
$$
and
\begin{align*}
g_y(x)&=(\Theta_{(1,\Theta_{(b_1,c_1)}(a_{i_1}))}\Theta_{(b_2,c_2)}(a_{i_2}),~
\Theta_{(1,\Theta_{(b_1,c_1)}(a_{i_1}))}\Theta_{(b_2,c_2)}(a_{i_2}))^{-1}(b_1,c_1)\overline{K}_{i_1,j_1}\\
&=(1,\lambda_{\lambda_{c_1}(a_{i_1})}\lambda_{c_2}(a_{i_2}))^{-1}(1,c_1)\overline{K}_{i_1,j_1}=
f^{-1}_{f_x(y)}(x).
\end{align*}
Thus $(X,r)$ is involutive because $g_y(x)=f^{-1}_{f_x(y)}(x)$ for any $x,y\in X$. Note that, after the identification 
 $G/\overline{K}_{i,j}\cong B/K_{i,j}$, these are the $f$ and $g$ maps from the statement of our theorem. 

Conversely, if $(X,r)$ is involutive, suppose 
to arrive to a contradiction that for some $\overline{K}_{i,j}$ there exists $(b,1)\in G\setminus \overline{K}_{i,j}$. Then, since 
$\langle Y\rangle_+=B$, $(b,1)$ decomposes as a product of elements of $\{(y,1): y\in Y\}$. Some term in this decomposition
does not belong to $\overline{K}_{i,j}$ (if all of them belong to $\overline{K}_{i,j}$, then $(b,1)$ belongs to it also), 
so there exists a $z\in Y$ such that $(z,1)\not\in \overline{K}_{i,j}$. 
In other words, $z\in Y$ satisfies $(z,1)\overline{K}_{i,j}\neq \overline{K}_{i,j}$. 

Now take $b_1=c_1=\lambda_{-z}^{-1}(z)$, and $x=(b_1,c_1)\overline{K}_{i,j}$. Moreover, since $z\in Y$, 
let $i_2$ be the orbit of $z$, and take $c_2$ such that 
$\lambda_{-z}\lambda_{\lambda_{c_1}(a_{i})}\lambda_{c_2}(a_{i_2})=z$. 
Take $b_2=1$ and $y=(b_2,c_2)\overline{K}_{i_2,j_2}$. Then, on one side,
\begin{align*}
f^{-1}_{f_{x}(y)}(x)&=(1,\lambda_{\lambda_{c_1}(a_{i})}\lambda_{c_2}(a_{i_2}))^{-1} (b_1,c_1)\overline{K}_{i,j}
=(1,\lambda^{-1}_{-z}(z))^{-1}(\lambda^{-1}_{-z}(z),~\lambda^{-1}_{-z}(z))\overline{K}_{i,j}\\
&=\left(\lambda^{-1}_{\lambda^{-1}_{-z}(z)}\lambda^{-1}_{-z}(z),1\right)\overline{K}_{i,j}
=(\lambda^{-1}_{-z+z}(z),1)\overline{K}_{i,j}\\
&=(z,1)\overline{K}_{i,j},
\end{align*}
and, on the other,
\begin{align*}
g_y(x)&=
(\lambda_{\lambda_{c_1}(a_{i})}\lambda_{c_2}(a_{i_2}),~\lambda_{\lambda_{c_1}(a_{i})}\lambda_{c_2}(a_{i_2}))^{-1}(b_1,c_1)\overline{K}_{i,j_1}\\
&=(\lambda^{-1}_{-z}(z),~\lambda^{-1}_{-z}(z))^{-1}(\lambda^{-1}_{-z}(z),~\lambda^{-1}_{-z}(z))\overline{K}_{i,j}\\
&=(1,1)\overline{K}_{i,j}=\overline{K}_{i_1,j_1}.
\end{align*}
Thus $(X,r)$ is not involutive because $g_y(x)\neq f^{-1}_{f_x(y)}(x)$, which is a contradiction.

It is straightforward to see that this is enough to prove the theorem. Observe that the condition 
$$\left\{a\in B:(1,a),(a,a)\in\bigcap_{i\in I}\bigcap_{j\in J_i}\bigcap_{b,c\in B}(b,c)\overline{K}_{i,j}(b,c)^{-1}\right\}=\{ 1\}$$
of Theorem~\ref{constSol} becomes equivalent to the condition
$$\bigcap_{i\in I}\bigcap_{j\in J_i}\bigcap_{b\in B}bK_{i,j}b^{-1}=\bigcap_{i\in I}\bigcap_{j\in J_i}\core_{(B,\cdot)}(K_{i,j})=\{ 1\}$$
of our theorem.
\end{proof}

Using the last theorem, we are able to construct all the non-degenerate involutive solutions 
from a given left brace, but it might happen that there are repetitions, that some of them 
are isomorphic. Now we want to explain the isomorphism of solutions in terms of automorphisms 
of the left brace. Again, this will be a restriction of Theorem~\ref{isomSol} to the case of 
left braces. 

Let $B$ be a left brace and let $\mathcal{O}$, $I$, $a_i$, $J_i$,
$K_{i,j}$ be as in Theorem~\ref{constInvSol}. Let $(X,r)$ be
the non-degenerate involutive solution of the statement of Theorem~\ref{constInvSol}.
Let $I'\subseteq \mathcal{O}$ such that $Y'=\bigcup_{i'\in I'}i'$
satisfy $B=\langle Y'\rangle_+$.
For each $i'\in I'$, let $\{ L_{i',j'} \}_{j'\in J'_{i'}}$ be a
non-empty family of subgroups of $\St(a_{i'})$ such that
$$\bigcap_{i'\in I'}\bigcap_{j'\in J'_{i'}}\core_{(B,\cdot)}(L_{i',j'})=\{ 1\}.$$
Let $(X',r')$ be the corresponding solution defined as in
the statement of Theorem~\ref{constInvSol}, that is
\begin{eqnarray*}
&&r'(b_1L_{i'_1,j'_1},b_2L_{i'_2,j'_2})=(\lambda_{b_1}(a_{i'_1})b_2L_{i'_2,j'_2},\lambda_{\lambda_{b_1}(a_{i'_1})b_2}(a_{i'_2})^{-1}b_1L_{i'_1,j'_1}).
\end{eqnarray*}
We shall characterize when $(X,r)$ and $(X',r')$ are isomorphic in
the following result. Since a left brace is the same as a skew left brace 
with abelian star group, it is easy to prove the following result using 
Theorem~\ref{isomSol}.

\begin{theorem}\label{isomInvSol}
The solutions $(X,r)$ and $(X',r')$ are isomorphic if and only if
there exist an automorphism $\psi$ of the left brace $B$, a
bijective map $\alpha\colon I\rightarrow I'$, a bijective map
$\beta_i\colon J_i\rightarrow J'_{\alpha(i)}$ and $z_{i,j}\in B$,
for each $i\in I$ and $j\in J_i$, such that
$$\psi(a_i)=\lambda_{z_{i,j}}(a_{\alpha(i)})\quad\mbox{and}\quad \psi(K_{i,j})=z_{i,j}L_{\alpha(i),\beta_i(j)}z_{i,j}^{-1},$$
for all $i\in I_1$ and $j\in J_i$.
\end{theorem}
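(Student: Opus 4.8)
The plan is to obtain Theorem~\ref{isomInvSol} as a direct specialization of Theorem~\ref{isomSol} to the case where $B$ is a left brace, exploiting the fact that for left braces the action $\Theta$ collapses to $\Theta_{(a,b)}=\lambda_b$, so it no longer depends on the first coordinate $a$. The key structural observation, already established in the proof of Theorem~\ref{constInvSol}, is that every subgroup $\overline{K}_{i,j}$ arising in an involutive solution has the special form $\overline{K}_{i,j}=\{(a,b)\in G:a\in B,\ b\in K_{i,j}\}=(B,+)\rtimes K_{i,j}$, and likewise $\overline{L}_{i',j'}=(B,+)\rtimes L_{i',j'}$. The whole argument is therefore a matter of translating each ingredient in the statement of Theorem~\ref{isomSol} through this dictionary and checking that the conditions simplify to the claimed ones.

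First I would invoke Theorem~\ref{isomSol} applied to the skew left brace $B$ (now a left brace) with the families $\overline{K}_{i,j}$ and $\overline{L}_{i',j'}$ of the above form. That theorem says $(X,r)\cong(X',r')$ if and only if there exist a brace automorphism $\psi$, bijections $\alpha$ and $\beta_i$, and elements $(y_{i,j},z_{i,j})\in G=(B,+)\rtimes(B,\cdot)$ satisfying
\begin{align*}
\psi(a_i)&=\Theta_{(y_{i,j},z_{i,j})}(a_{\alpha(i)}),\\
(\psi\times\psi)(\overline{K}_{i,j})&=(y_{i,j},z_{i,j})\,\overline{L}_{\alpha(i),\beta_i(j)}\,(y_{i,j},z_{i,j})^{-1}.
\end{align*}
Because $\Theta_{(y_{i,j},z_{i,j})}=\lambda_{z_{i,j}}$ in a left brace, the first condition reads $\psi(a_i)=\lambda_{z_{i,j}}(a_{\alpha(i)})$, which is already exactly the claimed relation and shows that the first coordinate $y_{i,j}$ is irrelevant to it.

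The main content is then the conjugation condition, and this is where I expect the only real (though still routine) work to lie. I would compute the conjugate $(y,z)\big[(B,+)\rtimes L\big](y,z)^{-1}$ inside the semidirect product and verify that, because the normal factor is all of $(B,+)$ and $L\subseteq(B,\cdot)$, the conjugate again has the form $(B,+)\rtimes\big(zLz^{-1}\big)$, independent of $y$. On the left-hand side, $(\psi\times\psi)(\overline{K}_{i,j})=\{(\psi(a),\psi(b)):a\in B,\ b\in K_{i,j}\}=(B,+)\rtimes\psi(K_{i,j})$, using that $\psi$ is a bijective brace endomorphism and hence restricts to a bijection of $(B,+)$ and of $(B,\cdot)$. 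Equating the two multiplicative factors yields precisely $\psi(K_{i,j})=z_{i,j}\,L_{\alpha(i),\beta_i(j)}\,z_{i,j}^{-1}$, which is the second claimed relation, and simultaneously confirms that $y_{i,j}$ may be dropped from the data entirely (one may take $y_{i,j}=0$ without loss of generality). Conversely, any $\psi$, $\alpha$, $\beta_i$, $z_{i,j}$ satisfying the two displayed relations of Theorem~\ref{isomInvSol} give rise to $(y_{i,j},z_{i,j})=(0,z_{i,j})$ fulfilling the hypotheses of Theorem~\ref{isomSol}, so the equivalence transfers back. I would close by remarking that the compatibility of the $f$ and $g$ maps under $F$ needs no separate verification here, since it is already subsumed in the cited theorem; the only genuine task is the semidirect-product conjugation computation, whose difficulty is bookkeeping rather than conceptual.
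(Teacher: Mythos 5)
Your proposal is correct and follows exactly the route the paper indicates: it presents Theorem~\ref{isomInvSol} as a specialization of Theorem~\ref{isomSol}, using that $\Theta_{(a,b)}=\lambda_b$ for a left brace and that the relevant subgroups in the involutive case have the form $(B,+)\rtimes K_{i,j}$ (as established in the proof of Theorem~\ref{constInvSol}), so that conjugation by $(y_{i,j},z_{i,j})$ depends only on $z_{i,j}$. The paper leaves these verifications to the reader ("it is easy to prove\dots using Theorem~\ref{isomSol}"); your write-up simply supplies the bookkeeping, and the semidirect-product conjugation computation you outline is the right one.
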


An alternative proof of the last theorem can be found in \cite[Theorem~4.1]{BCJ}.

As a corollary, we obtain another of the classical results about non-degenerate involutive solutions:
any left brace is the permutation group of a solution, so left braces and permutation groups are equivalent 
concepts. 

\begin{corollary}[{\cite[Theorem~2.1]{CJR}}]
For any left brace $B$, there exist a non-degenerate involutive solution $(X,r)$ such that 
$\mathcal{G}(X,r)\cong B$. Moreover, if $B$ is finite, we can choose $X$ to be finite. 
\end{corollary}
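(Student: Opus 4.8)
The plan is to specialize Theorem~\ref{constInvSol} with the simplest possible choice of data, exactly mirroring the argument used for the skew-brace corollary via Theorem~\ref{constSol}. Given a left brace $B$, I would take $I=\mathcal{O}$, the full set of orbits of the action $\lambda$, and set $Y=B$, so that the generation condition $B=\langle Y\rangle_+$ holds trivially. For each orbit $i\in I$ I would choose $J_i=\{1\}$ to be a singleton and let $K_{i,1}$ be the trivial subgroup $\{1\}$ of $\St(a_i)$.

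With these choices the required condition
$$
\bigcap_{i\in I}\bigcap_{j\in J_i}\core_{(B,\cdot)}(K_{i,j})=\{1\}
$$
is immediate, since every $K_{i,j}$ is trivial; this is precisely the observation recorded just before Theorem~\ref{constInvSol}. Hence all the hypotheses of Theorem~\ref{constInvSol} are satisfied, and the theorem directly produces an involutive non-degenerate solution $(X,r)$ with $X=\bigsqcup_{i\in I}B/K_{i,1}$ and $\mathcal{G}(X,r)\cong B$ as left braces, which establishes the first assertion. There is no genuine obstacle here: Theorem~\ref{constInvSol} carries all the weight, and the only point to check is that the degenerate choice of data is admissible, which it is by the triviality of the subgroups.

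For the finiteness claim, I would observe that when $B$ is finite the set of orbits $\mathcal{O}$ is finite, since each orbit is a nonempty subset of $B$ and distinct orbits are disjoint, so $|I|=|\mathcal{O}|\leq|B|$. Because $K_{i,1}=\{1\}$, each coset space $B/K_{i,1}$ is in bijection with $B$, and $J_i=\{1\}$ contributes a single copy per orbit. Thus $X$ is a disjoint union of $|\mathcal{O}|$ copies of the finite set $B$, hence finite, completing the proof.
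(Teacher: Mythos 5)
Your proof is correct and follows exactly the paper's own argument: the paper likewise takes $I=\mathcal{O}$, $Y=B$, $J_i=\{1\}$, and $K_{i,1}$ the trivial subgroup (written $0$ there), and invokes Theorem~\ref{constInvSol}. Your added remarks on why the core condition holds and why $X$ is finite when $B$ is finite are accurate elaborations of what the paper leaves implicit.
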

\begin{proof}
Take $I=\mathcal{O}$, $Y=B$, $J_i=\{1\}$ for any $i\in I$, and $K_{i,1}=0$ for any $i\in I$. 
This satisfies the conditions of Theorem~\ref{constInvSol}, so we obtain a non-degenerate 
involutive solution $(X,r)$ such that $\mathcal{G}(X,r)\cong B$.
\end{proof}

\subsection{Irretractable involutive solutions}

Let $(X,r)$ be a non-degenerate involutive solution. 
Recall that Etingof, Schedler and Soloviev defined in \cite{ESS} the following equivalence 
relation on $X$: $x\backsim y$ iff $f_x=f_y$. Then, the retraction of $(X,r)$, denoted by $\ret(X,r)$, is the 
solution defined over $X/\backsim$ in the natural way. We say that $(X,r)$ is {\em irretractable} 
if $\ret(X,r)=(X,r)$. 

In this section, we are going to show how to construct all the irretractable solutions from a given left brace. Due to 
\cite[Lemma 2.1 and Remark 2.2]{BCJO}, left braces that generate irretractable solutions are precisely the ones 
with trivial socle. 

Before the theorem, note that $\soc(G(X,r))=\bigcap \core_{(B,\cdot)}(\St(a_i))$, where the intersection is over all the representatives 
$a_i$ of the orbits. Hence $\soc(G(X,r))\subseteq \bigcap_{i\in I}\core_{(B,\cdot)}(\St(a_i))$.

\begin{theorem}\label{constIrret}
Let $B$ be a left brace with trivial socle. Using the notation of Theorem~\ref{constInvSol}, assume that
$J_i=\{1\}$ for any $i\in I$ and that $K_{i,1}=\St(a_i)$ for any $i\in I$. Then, the constructed solution 
is irretractable. 

Moreover, any irretractable non-degenerate involutive solution can be constructed in this way. 
\end{theorem}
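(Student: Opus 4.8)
The plan is to reduce Theorem~\ref{constIrret} to Theorem~\ref{constInvSol} together with Theorem~\ref{isomInvSol}, by analysing what the retraction relation becomes in the coset model. First I would set up the solution $(X,r)$ produced by Theorem~\ref{constInvSol} under the hypotheses $J_i=\{1\}$ and $K_{i,1}=\St(a_i)$, so that $X=\bigsqcup_{i\in I}B/\St(a_i)$. Note that the condition $\bigcap_{i\in I}\core_{(B,\cdot)}(\St(a_i))=\{1\}$ needed to apply Theorem~\ref{constInvSol} is exactly the statement that $\soc(G(X,r))=\{1\}$, which holds because $B$ has trivial socle (using the observation stated just before the theorem that $\soc(B)=\bigcap\core_{(B,\cdot)}(\St(a_i))$ over all orbit representatives, together with $B\cong\mathcal{G}(X,r)$). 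So the hypotheses are consistent and the construction genuinely yields a solution with $\mathcal{G}(X,r)\cong B$.

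The heart of the argument is to show this $(X,r)$ is irretractable, i.e.\ that $f_x=f_{x'}$ forces $x=x'$. Recall $f_{b_1\St(a_{i_1})}(b_2\St(a_{i_2}))=\lambda_{b_1}(a_{i_1})\,b_2\,\St(a_{i_2})$. Thus $f_x$ depends on $x=b_1\St(a_{i_1})$ only through the element $\lambda_{b_1}(a_{i_1})\in B$, which is exactly the orbit element $\eta$-type datum attached to $x$. I would prove that the assignment $x\mapsto \lambda_{b_1}(a_{i_1})$ is \emph{injective} on $X$. Suppose $\lambda_{b_1}(a_{i_1})=\lambda_{b_1'}(a_{i_1'})$; since distinct orbits are disjoint this first forces $i_1=i_1'$, and then $\lambda_{b_1}(a_{i_1})=\lambda_{b_1'}(a_{i_1})$ gives $b_1^{-1}b_1'\in\St(a_{i_1})$ (here one uses that $\lambda$ is a genuine left action of $(B,\cdot)$, from Lemma~\ref{propieLambda}), whence $b_1\St(a_{i_1})=b_1'\St(a_{i_1})$, i.e.\ $x=x'$. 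Because $f_x$ is determined by, and here conversely determines, $\lambda_{b_1}(a_{i_1})$, the relation $f_x=f_{x'}$ collapses to $x=x'$, so $\ret(X,r)=(X,r)$.

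For the converse — that every irretractable solution arises this way — I would start from an arbitrary irretractable $(X,r)$ with $\mathcal{G}(X,r)\cong B$. By \cite[Lemma~2.1 and Remark~2.2]{BCJO} irretractability forces $\soc(B)=\{1\}$, giving the trivial-socle hypothesis. By Theorem~\ref{constInvSol}, $(X,r)$ is isomorphic to some solution built from a family $\{K_{i,j}\}$. The task is to upgrade that family, up to the isomorphism of Theorem~\ref{isomInvSol}, to the canonical choice $J_i=\{1\}$, $K_{i,1}=\St(a_i)$. Irretractability means that on the model side no two cosets can give the same $f$; since $f_{bK_{i,j}}$ only sees the orbit datum $\lambda_b(a_i)$, two cosets $bK_{i,j}$ and $b'K_{i,j}$ with $b^{-1}b'\in\St(a_i)$ produce the same $f$, so irretractability forces $K_{i,j}=\St(a_i)$ and forces each $J_i$ to be a singleton (two indices $j\neq j'$ would duplicate cosets with equal $f$). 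Then Theorem~\ref{isomInvSol}, with the identity automorphism and $z_{i,1}=1$, identifies $(X,r)$ with the canonical construction.

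\textbf{The main obstacle} I anticipate is the converse direction: care is needed to argue that a solution given abstractly by \emph{some} admissible family $\{K_{i,j}\}$ must, when irretractable, have \emph{singleton} index sets $J_i$ and $K_{i,1}=\St(a_i)$, rather than merely being isomorphic to such after a brace automorphism. The cleanest route is to phrase irretractability intrinsically as injectivity of $x\mapsto f_x$, translate it through the explicit formula $f_{bK_{i,j}}=[\,y\mapsto \lambda_b(a_i)\,y\,]$ to the statement that the orbit-datum map $bK_{i,j}\mapsto(i,\lambda_b(a_i))$ is injective on $X$, and check this pins down both $J_i=\{1\}$ and $K_{i,j}=\St(a_i)$; one then invokes Theorem~\ref{isomInvSol} only to absorb the choice of orbit representatives $a_i$, so $\psi$ may be taken to be the identity and each $z_{i,1}=1$.
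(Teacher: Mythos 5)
Your proposal is correct and follows essentially the same route as the paper: the forward direction uses the trivial-core condition (equivalent to the trivial socle, since $Y$ generates $(B,+)$) to see that $f_x=f_y$ forces $\lambda_{b_1}(a_{i_1})=\lambda_{b_2}(a_{i_2})$, then orbit disjointness and $K_{i,1}=\St(a_i)$ to conclude $x=y$; the converse shows that a proper subgroup $K_{i,j}\lneq\St(a_i)$ or a non-singleton $J_i$ produces two distinct points with equal $f$-maps. The only cosmetic difference is your final appeal to Theorem~\ref{isomInvSol}, which the paper does not need since it argues directly that the family $\{K_{i,j}\}$ itself must already be the canonical one.
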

\begin{proof}
To prove the first statement, assume that $f_x=f_y$ for some $x=b_1K_{i_1,1}$ and $y=b_2K_{i_2,1}$ in $X$ 
(note that we will write always $j=1$ because $J_i=\{1\}$ by assumption). 
By definition of $f$, this means that $\lambda_{b_1}(a_{i_1})b K_{i,1}=\lambda_{b_2}(a_{i_2})b K_{i,1}$ for any $b\in B$, and any $i$. 
In this case, the fact that the 
intersection $$\bigcap_{i,j} \core_{(B,\cdot)}(K_{i,j})=\bigcap_{i,j}\bigcap_{b\in B} bK_{i,j}b^{-1}$$ is equal to $\{0\}$ implies that 
$\lambda_{b_1}(a_{i_1})=\lambda_{b_2}(a_{i_2})$. This last equality is impossible for elements in different orbits, so $i_1=i_2$. 
Hence $b_1^{-1} b_2$ belongs to the stabilizer of $a_{i_1}$. By assumption, $K_{i_1,1}$ is the unique $K_{i,1}$ equal to $\St(a_{i_1})$, so 
$b_1^{-1} b_2\in K_{i_1,1}.$ In other words, $x=b_1K_{i_1,1}=b_2K_{i_1,1}=y$, as we wanted to prove. 

For the converse statement, assume first that $K_{i,j}\lneq \St(a_i)$ for some $i,j$. In that case, 
we can choose $x\in \St(a_i)\setminus K_{i,j}$. Then,
$$
f_{xK_{i,j}}(bK_{i_2,j_2})=\lambda_x(a_i)bK_{i_2,j_2}=a_ibK_{i_2,j_2},
$$
and
$$
f_{0K_{i,j}}(bK_{i_2,j_2})=\lambda_0(a_i)bK_{i_2,j_2}=a_ibK_{i_2,j_2}.
$$
Thus $f_{xK_{i,j}}=f_{0K_{i,j}}$ for $xK_{i,j}\neq K_{i,j}=0K_{i,j}$, and the solution is not irretractable. 

On the other hand, assume that $\ord{J_{i}}\neq 1$ for some $i\in I$. Then, by the reasoning in the previous paragraph, 
$K_{i,j}=\St(a_i)$ for any $j\in J_i$. Hence there are two different orbits $X_{i_1}$ and $X_{i_2}$ in $X$ 
of the form $B/\St(a_i)$. But then $x=0\St(a_i)\in X_{i_1}$ and $y=0\St(a_i)\in X_{i_2}$ satisfy $f_x=f_y$, so 
the solution is not irretractable. 
\end{proof}

Recall that, given a left brace $B$, the {\em associated solution} to $B$ is defined in \cite[Lemma~2]{CJO} as 
$$
\begin{array}{cccc}
r_B:& B\times B& \longrightarrow & B\times B\\
   & (a,b)& \longmapsto & \left(\lambda_a(b),~\lambda^{-1}_{\lambda_a(b)}(a)\right).
\end{array}
$$
This solution correspond to the case $I=\mathcal{O}$, $Y=B$, $J_i=\{1\}$, and $K_{i,1}=\St(a_i)$ for any $i\in I$. 

Since in general $\ret(B,r_b)=B/\soc(B)$, this solution is irretractable when $B$ has trivial socle.
All this means that the maximal irretractable solution is the associated solution to $B$, because all the others 
are sub-solutions of that one (we can obtain them by eareasing some orbits from $(B,r_B)$, being careful of course with 
the properties of Theorem~\ref{constInvSol}).

\begin{example}
As an example, we show now how to construct a recent example of solution due to Vendramin \cite{VendraminCounter}. 
It is an example of a non-degenerate involutive solution of size $8$ which is both irretractable and square-free. 
First of all, consider the abelian group $(\Z/(2))^6$ with the following lambda map:
$$
\lambda_{(y_1,\dots,y_6)}:=
\begin{pmatrix}
1& y_3& B& 0& 0& 0\\
0& 1& A& 0& 0& 0\\
0& 0& 1& 0& 0& 0\\
0& 0& 0& 1& y_6& y_5+y_6(y_1+y_2+y_2y_3)\\
0& 0& 0& 0& 1& y_1+y_2+y_2y_3\\
0& 0& 0& 0& 0& 1
\end{pmatrix},
$$
where $A=A(y_1,\dots,y_6):=(y_4+y_5+y_5y_6)+y_6(y_1+y_2+y_2y_3)$ and $B=B(y_1,\dots,y_6):=y_2+y_3\cdot A$.
We know that its multiplicative group is isomorphic to $D_4\times D_4$.

It is straightforward to check (for example, using \cite[Lemma~2.6]{B3}) that this defines a left brace, and, moreover, that 
its socle is trivial. The two sets
$$
\mathcal{O}_1=\mathcal{O}_{(0,0,1,0,0,0)}=\{(y_1,y_2,1,0,0,0):y_i\in\Z/(2)\},
$$
$$
\mathcal{O}_2=\mathcal{O}_{(0,0,0,0,0,1)}=\{(0,0,0,y_1,y_2,1):y_i\in\Z/(2)\},
$$
are orbits of this action, whose union generates all $B$. The corresponding stabilizers are
$$
K_{1,1}=\St(0,0,1,0,0,0)=\{(y_1,0,y_3,y_4,y_5,y_6):y_4+y_5+y_6+y_1y_6=0\},
$$
$$
K_{2,1}=\St(0,0,0,0,0,1)=\{(y_1,0,y_3,y_4,0,y_6):y_4=y_1y_6\},
$$
whose intersection is $\{(0,0,y_3,0,0,y_6):y_3,y_6\in\Z/(2)\}$. The core of this last subgroup is zero. 
Hence, by Theorem~\ref{constIrret}, we obtain an irretractable solution. 

Moreover, note that $(0,0,1,0,0,0)\in\St(0,0,1,0,0,0)$, and, more generally,
$$\lambda_b(0,0,1,0,0,0)\in \St(\lambda_b(0,0,1,0,0,0))=bK_{1,1}b^{-1}$$ for any $b\in B$. 
The analogous result holds for $(0,0,0,0,0,1)$. 
Thus, by Proposition~\ref{constSquareFree}, we obtain a square-free solution. 

To recover Vendramin's solution with the notation of \cite[Example~3.9]{VendraminCounter}, use the following identification:
$$
x_1=(0,0,1,0,0,0),~x_3=(1,0,1,0,0,0),~x_5=(0,1,1,0,0,0),~x_7=(1,1,1,0,0,0),
$$
$$
x_2=(0,0,0,0,0,1),~x_4=(0,0,0,1,0,1),~x_6=(0,0,0,0,1,1),~x_8=(0,0,0,1,1,1).
$$

\end{example}

\section{Construction of combinatorial invariants of knot theory: racks and quandles}\label{sectRacks}

In this section, we relate our set-theoretic solutions with some classes of algebraic structures that 
have become recently very important in knot theory, producing some new combinatorial knot invariants, 
and appearing in virtual knot theory, a generalization of knot theory 
(see \cite{CJKLS} and \cite{Nelson}, and the references 
there). Besides, racks have become important 
for Hopf algebras, due to their relations with Nichols algebras and finite dimensional 
pointed Hopf algebras, the classification of 
which have been intensively studied in the last years, see \cite{AG}. 

\begin{definition}
A \emph{shelf} is a set $X$ with a (left) self-distributive operation $\circ$; i.e. it satisfies 
$a\circ (b\circ c)=(a\circ b)\circ (a\circ c)$ for every $a,b,c\in X$.
A \emph{rack} is a shelf $(X,\circ)$ such that the maps $b\mapsto a\circ b$ are 
bijective for any $a\in X$. A \emph{quandle} is a rack $(X,\circ)$ such that $a\circ a=a$ for any $a\in X$. 

A \emph{birack} is a set $X$ with two operations $\circ$ and $\star$ such that 
the maps $x\mapsto z\circ x$, and $x\mapsto z\star x$ are bijective for any $z\in X$, and 
the following identities are satisfied:
\begin{align*}
x\circ (y\circ z)&=(x\circ y)\circ ((y\star x)\circ z),\\
x\star (y\star z)&=(x\star y)\star ((y\circ x)\star z),\text{ and}\\
((x\star y)\circ z)\star (y\circ x)&=((x\circ z)\star y)\circ (z\star x). 
\end{align*}
\end{definition}

Note that $(X,\circ)$ is a shelf if and only if $r(x,y)=(y,y\circ x)$ is a set-theoretic solution 
of the Yang-Baxter equation, that $(X,\circ)$ is a rack if and only if $r(x,y)=(y,y\circ x)$ is 
a non-degenerate solution, and that $(X,\circ)$ is a quandle if and only if $r(x,y)=(y,y\circ x)$
is a square-free non-degenerate solution. Moreover, $(X,\circ,\star)$ is a birack if and only if 
$r(x,y)=(x\circ y,y\star x)$ is a non-degenerate solution. In this way, the problem of constructing new 
examples of these algebraic structures is equivalent to our problem of finding new solutions (in the case of 
racks and quandles, new solutions with trivial first component). 
In fact, with Theorems~\ref{constSol} and \ref{isomSol}, these problems can be studied through brace theory.

Finally, the following result shows that the general non-degenerate solutions are important in rack theory, since 
we can associated to any such solution a solution with first component equal to the identity (in other words, a rack). 
Observe that for this application we need non-involutive solutions because, for involutive solutions, the associated rack is 
trivial. 

\begin{proposition}[{\cite[Theorem~2.3]{Soloviev}}]
Let $(X,r)$ be a non-degenerate solution. Then, the operation $\circ$ over $X$ given 
by $y\circ x:=f_y g_{f^{-1}_x(y)}(x)$ defines a rack. 
The structure group of the non-degenerate solution $(X,s)$, where $s(x,y)=(y,y\circ x)$, is isomorphic to $A(X,r)$. 
\end{proposition}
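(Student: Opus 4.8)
The plan is to derive both assertions from a single change of variables in the defining relation of $A(X,r)$, together with the characterization recalled just before the statement: $(X,\circ)$ is a rack exactly when $s(x,y)=(y,y\circ x)$ is a non-degenerate solution.

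First I would record the substitution. Fix $x\in X$; since $f_x$ is a bijection of $X$, the rule $y\mapsto z:=f_x(y)$ runs bijectively over $X$, and under it $f_{f_x(y)}=f_z$ and $g_y(x)=g_{f^{-1}_x(z)}(x)$, so that $f_{f_x(y)}g_y(x)=f_z\,g_{f^{-1}_x(z)}(x)=z\circ x$ by the definition of $\circ$. Hence the relation $x\star f_x(y)=f_x(y)\star f_{f_x(y)}g_y(x)$ of $A(X,r)$ becomes $x\star z=z\star(z\circ x)$, and as $(x,y)$ runs over $X\times X$ so does $(x,z)$. On the other hand $s(x,y)=(f^s_x(y),g^s_y(x))$ with $f^s_x=\id$ and $g^s_y(x)=y\circ x$, so by definition $G(X,s)=\langle X\mid x\cdot y=y\cdot(y\circ x)\rangle$. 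The two families of relations coincide after renaming $\star$ as $\cdot$, so the identity on generators $X$ induces an isomorphism of groups $A(X,r)\xrightarrow{\sim}G(X,s)$; this proves the second assertion.

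For the rack property I would verify that $s$ is a non-degenerate solution. Writing $\sigma_y(x):=y\circ x$, the equality $f^s=\id$ makes the first two solution conditions hold automatically (both sides of (\ref{SolCond1}) equal $z$, and both sides of (\ref{SolCond2}) equal $\sigma_z(y)$), while (\ref{SolCond3}) reduces precisely to the self-distributive law $\sigma_z\sigma_y=\sigma_{\sigma_z(y)}\sigma_z$. Non-degeneracy I would obtain by exhibiting the inverse of $\sigma_y$: from $u=y\circ x=f_y\,g_{f^{-1}_x(y)}(x)$ one reads off $r(x,f^{-1}_x(y))=(y,f^{-1}_y(u))$, whence $x$ is the first coordinate of $r^{-1}(y,f^{-1}_y(u))$, so each $\sigma_y$ is a bijection with this explicit inverse.

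The main obstacle is the self-distributive law, that is, condition (\ref{SolCond3}) for $s$. A tempting shortcut is only partial: the relation $x\star z=z\star(z\circ x)$ obtained above says exactly that $i(z\circ x)=i(z)^\star\star i(x)\star i(z)$ in $A(X,r)$, so $\circ$ lifts to $\star$-conjugation, and a short conjugation computation gives $i(z\circ(y\circ x))=i((z\circ y)\circ(z\circ x))$. However $i\colon X\to A(X,r)$ need not be injective (as the Etingof example shows), so this yields the identity only after applying $i$, not in $X$ itself; moreover $\sigma_y$ genuinely depends on $y$ as an element of $X$, not merely on $i(y)$, so one cannot factor the argument through $i(X)$. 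I therefore expect the clean route to be a direct verification of $\sigma_z\sigma_y=\sigma_{\sigma_z(y)}\sigma_z$ from (\ref{SolCond1})--(\ref{SolCond3}): after the substitution $w=f^{-1}_x(y)$ the identity unwinds into relations among the maps $f$ and $g$, drawing essentially on the mixed condition (\ref{SolCond2}), with the bijectivity of each $f_x$ used repeatedly to pass between the two indexings. Once $s$ is seen to be a non-degenerate solution, the characterization recalled above gives that $(X,\circ)$ is a rack.
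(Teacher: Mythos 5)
The paper does not actually prove this proposition --- it is quoted from \cite[Theorem~2.3]{Soloviev} without proof --- so there is no internal argument to compare against; I assess your proposal on its own terms. Your proof of the second assertion is complete and correct: the substitution $z=f_x(y)$ turns each defining relation of $A(X,r)$ into $x\star z=z\star(z\circ x)$, and since $f^s=\id$ these are exactly the defining relations of $G(X,s)$, so the identity on generators gives the isomorphism. Your reduction of the rack property to ``$s$ is a non-degenerate solution,'' your observation that (\ref{SolCond1}) and (\ref{SolCond2}) are automatic for $s$ while (\ref{SolCond3}) becomes $\sigma_z\sigma_y=\sigma_{\sigma_z(y)}\sigma_z$, and your bijectivity argument for $\sigma_y$ via $r^{-1}$ are all correct, as is your diagnosis that the conjugation argument inside $A(X,r)$ cannot be descended to $X$ because $i$ need not be injective.

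The genuine gap is that the one substantive step --- the identity $\sigma_z\sigma_y=\sigma_{\sigma_z(y)}\sigma_z$ --- is never carried out: you only assert that it ``unwinds into relations among $f$ and $g$.'' That identity is the entire content of the first assertion, and it is not a routine unwinding. Your instinct about which ingredients are needed is right, and here is one way to close the gap. Define the bijection $\Psi(x_1,x_2,x_3)=(x_1,\,f_{x_1}(x_2),\,f_{x_1}f_{x_2}(x_3))$ of $X^3$. Condition (\ref{SolCond1}) gives $\Psi\circ r_1=s_1\circ\Psi$ immediately. For $\Psi\circ r_2=s_2\circ\Psi$ one must check
$$\sigma_{f_{x_1}f_{x_2}(x_3)}\bigl(f_{x_1}(x_2)\bigr)=f_{x_1}f_{f_{x_2}(x_3)}g_{x_3}(x_2):$$
by (\ref{SolCond1}), $f_{f_{x_1}(x_2)}^{-1}\bigl(f_{x_1}f_{x_2}(x_3)\bigr)=f_{g_{x_2}(x_1)}(x_3)$, so the left-hand side equals $f_{f_{x_1}f_{x_2}(x_3)}\,g_{f_{g_{x_2}(x_1)}(x_3)}\bigl(f_{x_1}(x_2)\bigr)$; applying (\ref{SolCond2}) turns this into $f_{f_{x_1}f_{x_2}(x_3)}\,f_{g_{f_{x_2}(x_3)}(x_1)}\,g_{x_3}(x_2)$, and a second use of (\ref{SolCond1}) (with $y=f_{x_2}(x_3)$) collapses the outer composition to $f_{x_1}f_{f_{x_2}(x_3)}$. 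Hence $s_1s_2s_1=\Psi r_1r_2r_1\Psi^{-1}=\Psi r_2r_1r_2\Psi^{-1}=s_2s_1s_2$, which together with your bijectivity of each $\sigma_y$ shows $(X,s)$ is a non-degenerate solution, i.e.\ $(X,\circ)$ is a rack. Without this (or an equivalent direct computation), your argument proves the group isomorphism but not the rack property.
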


\subsection{Construction of racks and quandles}

When we restrict ourselves to the case of racks, the results can be improved. Observe that,  
if $s(x,y)=(f_x(y),g_y(x))=(y,y\circ x)$ is the solution corresponding to a rack $(X,\circ)$, 
then the structure group of $(X,s)$ has a presentation of the form 
$$
G(X,s)=\left\langle X\mid x\cdot y=y\cdot (y\circ x)\right\rangle,
$$
and the derived group has a presentation of the form
$$
A(X,s)=\left\langle X\mid y^{\star}\star x\star y=y\circ x\right\rangle.
$$
Moreover, the two operations coincide because $x\cdot y=x\star f_x(y)=x\star y$. Therefore, in this case, 
the structure group of $(X,s)$ is a trivial brace. This implies that the permutation group, 
which in this case is equal to 
$$\mathcal{G}(X,r)=\langle (\id,g^{-1}_x):x\in X\rangle\cong \langle g^{-1}_x:x\in X\rangle,$$
is also a trivial brace. 
In other words, 
racks can be studied using only group theory, and forgetting about skew left braces. 

Nevertheless, our results of Section \ref{sectNonDegSol} can be applied here in a useful way. 
The next result shows how to recover all the racks from the group structure of $\mathcal{G}(X,r)$. 
We use the notation $C_G(g)$ for the centralizer of $g$ in $G$.

\begin{theorem}
Let $G$ be a group. Let $\{C_i\}_{i\in I}$ be a set of different conjugacy classes of $G$, and let 
$Y=\bigcup_{i\in I} C_i$. 
Fix an element $g_i\in C_i$ for each $i\in I$, and consider subgroups $K_{i,j}\leq C_G(g_i)$, $j\in J_i$. 
Suppose that $\langle Y\rangle=G$ and that $\core(\bigcap_{i,j} K_{i,j})=\{1\}$. 
Then, the set 
$X=\bigsqcup_{i,j} G/K_{i,j}$, with operation $(xK_{i,j})\circ (yK_{a,b})=xg_{i}^{-1}x^{-1}yK_{a,b}$, is a rack. 

Moreover, any rack is of this form. 
\end{theorem}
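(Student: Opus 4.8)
The plan is to deduce this statement from the general construction Theorem~\ref{constSol} by specializing it to the case of a \emph{trivial} skew left brace. As noted just before the statement, a rack $(X,\circ)$ corresponds to the non-degenerate solution $s(x,y)=(y,y\circ x)$, whose permutation group $\mathcal{G}(X,s)$ is a trivial brace $B=(G,\cdot)$ with $\star=\cdot$. So I would fix such a $B$ and unwind what Theorem~\ref{constSol} produces for it. Since $\lambda_a(b)=a^\star\star(a\cdot b)=b$ for every $a$, the action $\lambda$ is trivial; hence the group $\widehat{G}:=(B,\star)\rtimes(B,\cdot)$ is just the direct product $(G,\cdot)\times(G,\cdot)$, and $\Theta_{(a,b)}(c)=a\star\lambda_b(c)\star a^\star=aca^{-1}$ depends only on $a$ and is conjugation in $G$. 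Consequently the orbits of $\Theta$ are exactly the conjugacy classes of $G$, while $\St(g_i)=C_G(g_i)\times G$.

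First I would identify which families of subgroups yield racks, i.e. solutions with $f=\id$. I claim the correct subgroups of $\St(g_i)=C_G(g_i)\times G$ are those of the form $\bar K_{i,j}=K_{i,j}\times G$ with $K_{i,j}\leq C_G(g_i)$; these are exactly the subgroups of the second type mentioned at the end of Section~\ref{sectConstSol} (and, because $\lambda$ is trivial, every subgroup of $(B,\star)$ is invariant under $(B,\cdot)$, so all such $K_{i,j}\times G$ are admissible). For such a choice the coset $(p,q)\bar K_{i,j}$ is determined by $pK_{i,j}$ alone, so projection to the first factor gives a $\widehat{G}$-set isomorphism $\widehat{G}/\bar K_{i,j}\cong G/K_{i,j}$ and identifies $X$ with $\bigsqcup_{i,j}G/K_{i,j}$. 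Substituting these subgroups into the formula for $f$ in Theorem~\ref{constSol} gives $f_x(y)=(1,\Theta_{(b_1,c_1)}(g_{i_1}))(b_2,c_2)\bar K_{i_2,j_2}=(b_2,\,b_1g_{i_1}b_1^{-1}c_2)\bar K_{i_2,j_2}$, which alters only the second coordinate and hence equals $y$; so $f=\id$ and the solution is a rack. A direct computation of $g$, using $\Theta_{(1,w)}=\id$ in the trivial case, then yields $g_y(x)=b_2g_{i_2}^{-1}b_2^{-1}b_1K_{i_1,j_1}$, which is precisely the stated operation $(b_2K_{i_2,j_2})\circ(b_1K_{i_1,j_1})$.

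Next I would translate the hypotheses of Theorem~\ref{constSol}. The generation condition $B=\langle Y\rangle_\star$ becomes $\langle Y\rangle=G$, with $Y$ the union of the chosen conjugacy classes. Since $\core_{\widehat{G}}(K_{i,j}\times G)=\core_G(K_{i,j})\times G$, the admissibility condition $\{a:(1,a),(a,a)\in\bigcap_{i,j}\core_{\widehat{G}}(\bar K_{i,j})\}=\{1\}$ collapses: $(1,a)$ always lies in $\bigcap_{i,j}\core_G(K_{i,j})\times G$, so the condition reduces to $a\in\bigcap_{i,j}\core_G(K_{i,j})$, and this set equals $\core_G(\bigcap_{i,j}K_{i,j})$. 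Thus the hypothesis is exactly $\core_G(\bigcap_{i,j}K_{i,j})=\{1\}$, as in the statement, and Theorem~\ref{constSol} guarantees $(X,\circ)$ is a rack with $\mathcal{G}(X,s)\cong B$.

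For the ``moreover'' part I would show that the rack condition forces the product form $\bar K_{i,j}=K_{i,j}\times G$. By Theorem~\ref{constSol} every rack solution arises from $B=\mathcal{G}(X,s)$ together with subgroups $\bar K_{i,j}\leq C_G(g_i)\times G$, and it suffices to prove each $\bar K_{i,j}$ contains $\{1\}\times G$ (a subgroup of $C_G(g_i)\times G$ containing $\{1\}\times G$ is automatically of the form $K_{i,j}\times G$). Mirroring the converse argument of Theorem~\ref{constInvSol}, I would suppose some $(1,c)\notin\bar K_{i_0,j_0}$; writing $c$ as a product of elements of $Y$ and using that $a\mapsto(1,a)$ is a homomorphism, I obtain a single $z\in Y$ with $(1,z)\notin\bar K_{i_0,j_0}$. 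Writing $z=bg_{i_2}b^{-1}$ in its conjugacy class $i_2\in I$ and setting $x=(b,1)\bar K_{i_2,j_2}$, $y=\bar K_{i_0,j_0}$, the formula for $f$ gives $f_x(y)=(1,z)\bar K_{i_0,j_0}\neq\bar K_{i_0,j_0}=y$, contradicting $f=\id$. The main obstacle is precisely this last step: keeping the bookkeeping inside $\widehat{G}=G\times G$ straight and choosing the witnesses $x,y$ so that $f_x(y)\neq y$ detects the offending coset. Once the product structure of the $\bar K_{i,j}$ is established, the identification of $X$, of the operation as $\circ$, and of the core condition are routine.
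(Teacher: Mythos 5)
Your proposal is correct and follows essentially the same route as the paper's own (sketched) proof: specialize Theorem~\ref{constSol} to the trivial brace, identify $\Theta$ with conjugation by the first component so that orbits are conjugacy classes and stabilizers are $C_G(g_i)\times G$, take subgroups $K_{i,j}\times G$ to get $f=\id$ and $g$ equal to the stated $\circ$, and for the converse use $f=\id$ to force the subgroups to contain $\{1\}\times G$. You merely supply details the paper leaves implicit (the translation of the core condition and the explicit witness showing $f_x(y)\neq y$ when some $(1,z)$, $z\in Y$, escapes a $\bar K_{i_0,j_0}$), and these details are accurate.
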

\begin{proof}
Consider $G$ as a trivial brace. In this case, $(G,\star)\rtimes (G,\cdot)=G\times G$. The action 
$\Theta$ in this case is $\Theta_{(a,b)}(c)=a\star \lambda_b(c)\star a^\star=a\cdot c\cdot a^{-1}$, so 
it is conjugation by the first component, and therefore its orbits coincide with the conjugacy classes of $G$. 
The stabilizers of these orbits are of the form $C_G(g)\times G$. Hence if we take subgroups of the form 
$K_{i,j}\times G$, $i\in I$, $j\in J_i$, we have 
$(G\times G)/(K_{i,j}\times G)\cong G/K_{i,j}$ as $G\times G$-sets, and
all the hypothesis of Theorem~\ref{constSol} are satisfied, so we construct a non-degenerate solution of the Yang-Baxter equation. 
Finally, to check that this solution is a rack, it is enough to 
compute the maps $f$ and $g$ in this case: for $a=(x,z)(K_{i_1,j_1}\times G)=(x,1)(K_{i_1,j_1}\times G)$
and $b=(y,1)(K_{i_2,j_2}\times G)$, we have
\begin{align*}
f_a(b)=(1,x\cdot g_{i_1}\cdot x^{-1})(y,1)(K_{i_2,j_2}\times G)=(y,1)(K_{i_2,j_2}\times G)=b,
\end{align*} 
\begin{align*}
g_b(a)=(\Theta_{(y,1)}(g_{i_2}),1)^{-1}(x,1)(K_{i_1,j_1}\times G)=(y g_{i_2}^{-1} y^{-1} x,1)(K_{i_1,j_1}\times G).
\end{align*} 
The fact that the map $f_a$ in the first component of the solution is the identity shows that it is a rack. Moreover,
 the second component $g$ coincides with the operation $\circ$ defined in the statement. 

Conversely, if $(X,\circ)$ is any rack, we have noted that $B=\mathcal{G}(X,r)$ is a trivial brace, so again
$(B,\star)\rtimes (B,\cdot)=B\times B$, and the action $\Theta$ is conjugation by the first component. 
Since $(X,r)$, where $r(x,y)=(y,y\circ x)$ is a non-degenerate solution, it can be constructed as in 
Theorem~\ref{constSol}. We are done if we are able to prove that in this case the subgroups of $B\times B$ appearing in that theorem 
are of the form $K\times B$, where $K$ is a subgroup of $C_G(g)$ for some $g\in B$. But this is easy to check using that 
$f_x$ must be the identity for any $x\in X$ because $(X,r)$ is the solution associated to a rack. 
\end{proof}

\begin{corollary}
With the notation of the last theorem, if $g_i$ belongs to $K_{i,j}$ for any $i\in I$ and any $j\in J_i$, then 
the rack $(X,\circ)$ constructed above is a quandle. Moreover, any quandle is constructed in this way. 
\end{corollary}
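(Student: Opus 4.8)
The plan is to prove both directions by a single explicit coset computation, using the description of $\circ$ and the converse part of the preceding theorem; no new machinery is needed beyond what that theorem already provides.

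First I would verify the forward implication. An arbitrary element of $X=\bigsqcup_{i,j}G/K_{i,j}$ has the form $x=yK_{i,j}$ for some $y\in G$, $i\in I$, $j\in J_i$. Applying the rack operation of the last theorem to the pair $(x,x)$ gives
\[
x\circ x=(yK_{i,j})\circ(yK_{i,j})=y\,g_i^{-1}\,y^{-1}\,y\,K_{i,j}=y\,g_i^{-1}\,K_{i,j}.
\]
Hence $x\circ x=x$ holds if and only if $y\,g_i^{-1}K_{i,j}=yK_{i,j}$, equivalently $g_i^{-1}\in K_{i,j}$, equivalently $g_i\in K_{i,j}$. So if $g_i\in K_{i,j}$ for every $i\in I$ and $j\in J_i$, then $x\circ x=x$ for all $x\in X$, and $(X,\circ)$ is a quandle.

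For the converse, I would start from an arbitrary quandle $(X,\circ)$. Since a quandle is in particular a rack, the preceding theorem applies and produces a group $G=\mathcal{G}(X,r)$ (a trivial brace), conjugacy classes $C_i$ with representatives $g_i$, and subgroups $K_{i,j}\le C_G(g_i)$, realizing $(X,\circ)$ in the stated form. It then remains only to read off the extra hypothesis: taking $y=1$ in the computation above, the element $x=K_{i,j}\in G/K_{i,j}\subseteq X$ satisfies $x\circ x=g_i^{-1}K_{i,j}$, so the quandle axiom $x\circ x=x$ forces $g_i^{-1}\in K_{i,j}$, i.e. $g_i\in K_{i,j}$, for every $i$ and $j$. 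Thus the quandle is exactly of the asserted form.

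There is no serious obstacle here: the entire argument is the one-line coset identity above, and the only point requiring a little care is invoking the converse part of the last theorem so that the parametrizing data $(G,\{C_i\},\{g_i\},\{K_{i,j}\})$ is available before extracting the membership condition $g_i\in K_{i,j}$. In particular, evaluating the quandle condition at the single coset $y=1$ already captures it for all $y$, since the condition $g_i\in K_{i,j}$ is independent of $y$.
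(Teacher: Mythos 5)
Your proof is correct and follows essentially the same route as the paper: the paper's entire argument is the single coset computation $(xK_{i,j})\circ(xK_{i,j})=xg_i^{-1}K_{i,j}$, which equals $xK_{i,j}$ if and only if $g_i\in K_{i,j}$, with the converse read off from the preceding theorem exactly as you do. No gaps.
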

\begin{proof}
It is enough to observe that $(xK_{i,j})\circ (xK_{i,j})=xg_{i}^{-1}x^{-1}xK_{i,j}=xg_i^{-1} K_{i,j}$, and this is 
equal to $x K_{i,j}$ if and only if $g_i$ belongs to $K_{i,j}$.
\end{proof}

\begin{example}
Let $G$ be a finite simple group. It is true that $G$ is simple if and only if for any $x\in G$, $x\neq 1$, 
the conjugacy class of $x$ in $G$ generates the whole group $G$. Moreover, since the core of a subgroup 
is a normal subgroup of $G$, it is always trivial. So in this case any conjugacy class 
with a subgroup of the centralizer of any of its elements generates a rack. 
\end{example}

\section{An application to Hopf-Galois extensions}\label{sectHopfGalois}
In this final section, we want to point out an equivalence 
between finite skew left braces, and Hopf-Galois extensions, a class of Hopf 
algebras that generalizes Galois field extensions.

Given a group $A$, we define the holomorph of $A$ as $\hol(A):=A\rtimes \aut(A)$. 
When $A$ is equal to an elementary abelian $p$-group (i.e. $A\cong (\Z/(p))^n$), $\hol(A)$ is also known as 
the affine group of $A$. Regular subgroups of the affine group have been studied for example in \cite{LPS,Hegedus}, mainly
as subgroups of primitive permutation groups.  

 The following result is an easy generalization of \cite[Theorem 1]{CR}, which gives an equivalence between skew left braces 
and regular subgroups of the holomorph. 
Recall that a regular subgroup
of $\hol(A)$ is a subgroup $H\leq \hol(A)$ such that for any $w\in A$ there exists a unique 
$(v,M)\in H$ such that $(v,M)(w):=v\star M(w)=1$.

\begin{proposition}[{\cite[Theorem~4.2 and Proposition~4.3]{GV}}]\label{regular}
Let $(A,\star)$ be a group.
\begin{enumerate}[(1)]
\item Let $B$ be a skew left brace with star group isomorphic to $A$. Then, $\{(a,\lambda_a):a\in A\}$ is a 
regular subgroup of $\hol(A)$.

Conversely, if $H$ is a regular subgroup of $\hol(A)$, we have $\pi_1(H)=A$, and 
the group $(A,\star)$ with the product
$$a\cdot b:=a\star\pi_2((\left.\pi_1\right|_H)^{-1}(a))(b)$$
is a skew left brace with multiplicative group isomorphic to $H$, where $\pi_1:H\to A$ and $\pi_2:H\to \aut(A)$ are 
the natural projections.

\item This defines a bijective correspondence between skew left braces with star group $A$, and regular
subgroups of $\hol(A)$. Moreover, 
isomorphic braces correspond to conjugate subgroups of $\hol(A)$ by elements of $\aut(A)$.
\end{enumerate}
\end{proposition}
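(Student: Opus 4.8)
The plan is to verify both constructions in part (1), show they are mutually inverse to get the bijection in (2), and finally translate brace isomorphism into $\aut(A)$-conjugacy. Throughout I fix the convention that multiplication in $\hol(A)=A\rtimes\aut(A)$ is $(a,M)(b,N)=(a\star M(b),\,M\circ N)$, so that the action on $w\in A$ reads $(a,M)(w)=a\star M(w)$.

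For the forward direction of (1), I would first show that $H=\{(a,\lambda_a):a\in A\}$ is a subgroup isomorphic to $(B,\cdot)$. The key identity is $a\star\lambda_a(b)=a\star a^\star\star(a\cdot b)=a\cdot b$, which together with Lemma~\ref{propieLambda}(ii) ($\lambda_a\circ\lambda_b=\lambda_{a\cdot b}$) gives $(a,\lambda_a)(b,\lambda_b)=(a\cdot b,\lambda_{a\cdot b})\in H$; the identity and inverses are handled the same way, so $a\mapsto(a,\lambda_a)$ is a group isomorphism $(B,\cdot)\to H$. Regularity is immediate once I observe that, since $M(1)=1$ for every $M\in\aut(A)$, evaluation at the identity satisfies $(v,M)(1)=v=\pi_1(v,M)$; the regularity condition — for each $w$ a unique $h\in H$ with $h(w)=1$, equivalently (replacing $h$ by $h^{-1}$) the map $h\mapsto h(1)$ being a bijection — is therefore exactly the statement that $\pi_1|_H$ is a bijection, and here it visibly is, with inverse $a\mapsto(a,\lambda_a)$.

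For the converse I would run the same observation backwards: regularity of $H$ says precisely that $\pi_1|_H\colon H\to A$ is a bijection, so $\pi_1(H)=A$ and for each $a\in A$ there is a unique $h_a=(a,M_a)\in H$ with $M_a=\pi_2(h_a)$. Defining $a\cdot b:=a\star M_a(b)$, the product in $H$ gives $h_ah_b=(a\star M_a(b),\,M_aM_b)=(a\cdot b,\,M_aM_b)$, and injectivity of $\pi_1|_H$ forces $h_ah_b=h_{a\cdot b}$; thus $a\mapsto h_a$ is a group isomorphism $(A,\cdot)\to H$ (in particular $(A,\cdot)$ is a group and $M_{a\cdot b}=M_aM_b$). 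The brace axiom is then a one-line check using $M_a\in\aut(A,\star)$:
\begin{align*}
a\cdot(b\star c)&=a\star M_a(b\star c)=a\star M_a(b)\star M_a(c),\\
(a\cdot b)\star a^\star\star(a\cdot c)&=a\star M_a(b)\star a^\star\star a\star M_a(c)=a\star M_a(b)\star M_a(c),
\end{align*}
which agree. Computing $\lambda_a(b)=a^\star\star(a\cdot b)=M_a(b)$ identifies the lambda map with $M_a$, and this is exactly what makes the two constructions mutually inverse: from a brace one recovers $a\cdot b=a\star\lambda_a(b)$, and from $H$ one recovers $H=\{(a,M_a):a\in A\}=\{(a,\lambda_a):a\in A\}$, proving the bijection in (2).

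Finally, for the conjugacy statement I would conjugate by elements $(1,\psi)$ with $\psi\in\aut(A)$; a direct computation gives $(1,\psi)(a,M)(1,\psi)^{-1}=(\psi(a),\,\psi M\psi^{-1})$. Writing $\lambda^1,\lambda^2$ for the lambda maps of two braces $B_1,B_2$ on $A$ with associated subgroups $H_1,H_2$, one sees that $(1,\psi)H_1(1,\psi)^{-1}=H_2$ holds if and only if $\psi\lambda^1_a\psi^{-1}=\lambda^2_{\psi(a)}$ for all $a$. The remaining point is that, for $\psi\in\aut(A,\star)$, this relation is equivalent to $\psi$ being a brace isomorphism, via
$$\psi(a)\cdot_2\psi(b)=\psi(a)\star\lambda^2_{\psi(a)}(\psi(b))=\psi(a)\star\psi(\lambda^1_a(b))=\psi\bigl(a\star\lambda^1_a(b)\bigr)=\psi(a\cdot_1 b).$$
The one place demanding care — and the main thing to get right — is the bookkeeping of the semidirect-product and regularity conventions; everything else is the short algebra indicated above.
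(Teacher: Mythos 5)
Your proposal is correct, and it follows the standard argument: identify regularity of $H$ with bijectivity of $\pi_1|_H$ via evaluation at $1$, check that $a\mapsto(a,\lambda_a)$ is the graph of the lambda map and that the two constructions invert each other, and translate conjugation by $(1,\psi)$ into the condition $\psi\lambda^1_a\psi^{-1}=\lambda^2_{\psi(a)}$, which is exactly the brace-isomorphism condition. The paper states this proposition without proof (citing Guarnieri--Vendramin), and your argument is precisely the one used there, so there is nothing further to reconcile.
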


The study of the Yang-Baxter equation has been intimately related to the theory of Hopf algebras 
since the pioneering work of Drinfeld \cite{Drinfeld2}. Hence it comes as no surprise that 
skew left braces, whose initial motivation was also the Yang-Baxter equation, 
have some connections with Hopf algebras too. Some of them are their relation with triangular 
semisimple and cosemisimple Hopf algebras explained in \cite{EtingofGelaki,BDG}, and the relation with 
finite dimensional pointed Hopf algebras through rack theory, explained in \cite{AG} (we clarify the 
connection with rack theory in Section~\ref{sectRacks}). These two classes of Hopf algebras 
have received a lot of attention recently since they are important in the program sketched in \cite[pages 376, 377]{AS} 
to try to obtain a classification of finite-dimensional Hopf algebras. 

In this section, we explain a new connection of skew left braces with another topic of recent 
interest in the study of Hopf algebras: Hopf-Galois extensions. In fact, we are going to show that the 
two algebraic structures are equivalent, using the equivalence of braces with regular subgroups of the holomorph. 

Let $L/K$ be a finite 
field extension. We say that $L/K$ is a Hopf-Galois extension if there exists a Hopf algebra $H$ over $K$
of finite dimension, and $\mu:H\to \operatorname{End}_K(L)$ a Hopf action such that 
$
(1,\mu): L\otimes_K H\to \operatorname{End}_K(L)
$
is an isomorphism of $K$-vector spaces, where $(1,\mu)(l\otimes h)(t)=l\cdot (\mu(h)(t))$. 
For example, when $L/K$ is a Galois extension with Galois group $G$, the Hopf algebra
$H=K[G]$ satisfies these properties. 

It is proved in \cite{By} that, when $L/K$ is a finite Galois extension and $G=Gal(L/K)$, 
if $G'$ is a group such that there exists an injective morphism of groups $\gamma: G\hookrightarrow\hol(G')=G'\rtimes\aut(G')$
satisfying that $\gamma(G)$ is a regular subgroup of $\hol(G')$, then the Hopf algebra $H=(L[G'])^G$ 
of $G$-fixed points in $L[G']$
defines a Hopf-Galois extension of 
$L/K$. Moreover, any Hopf-Galois extension in this case is of this form. Hence this translates the problem 
of finding Hopf-Galois extensions completely in group-theoretical terms: given a Galois group $G$, first find 
all the regular subgroups of $\hol(G')$ isomorphic to $G$, and second, find all the injective morphisms from $G$ 
to $\hol(G')$ with one of these subgroups as image. Observe that the regularity property implies that $\ord{G}=\ord{G'}$.

Note that, by Proposition \ref{regular}, to find regular subgroups of $\hol(G')$ is equivalent to 
find skew left braces with star group isomorphic to $G'$. 
So the first part of the problem of construction of Hopf-Galois extensions
of a Galois extension $L/K$ with Galois group equal to $G$ is equivalent to 
our problem of construction of skew left braces with multiplicative group isomorphic to $G$. 
We hope that this connection between these two theories would be fruitful in the future.  As an example, 
we translate some of the results about Hopf-Galois extensions to our setting:

\begin{enumerate}[(a)]
\item Skew left braces of order $p\cdot q$, where $p$ and $q$ are two different primes,
are completely classified (Byott \cite{Byott1}). 

\item Skew left braces with multiplicative group equal to a finite simple group 
are completely classified. In fact, for any simple group $G$, there are only two possible 
skew left braces $B$ with $(B,\cdot)\cong G$, given in Example~\ref{FirstExNLB} (a) and (b). 
Observe that in this case $(B,\star)\cong G$ 
(Carnahan and Childs \cite{CChilds}, Byott \cite{Byott2}).

\item There are some relations between the star and the multiplicative group: 
If $(B,\cdot)$ is a finite abelian group, then $(B,\star)$ is solvable. On the other hand, if $(B,\star)$
is a finite nilpotent group, then $(B,\cdot)$ is solvable (Byott \cite{Byott3}). 

\item There exist examples of finite skew left braces $B$ with $(B,\cdot)$ solvable and $(B,\star)$ simple 
(Byott \cite[Theorem 3]{Byott3}). 
But it is an open question to know whether $(B,\star)$ finite solvable implies that $(B,\cdot)$ is solvable. 
It is also open to know whether $(B,\cdot)$ finite nilpotent implies $(B,\star)$ is solvable. 

\end{enumerate}

\bibliographystyle{amsplain}
\bibliography{testbib}

\vspace{30pt}
 \noindent \begin{tabular}{llllllll}
 D. Bachiller  \\
 Departament de Matem\`atiques \\
 Universitat Aut\`onoma de Barcelona  \\
08193 Bellaterra (Barcelona), Spain  \\
 dbachiller@mat.uab.cat \\
\end{tabular}

\end{document}